
\documentclass[11pt,a4paper]{article}

\marginparwidth 0pt \oddsidemargin 0pt \evensidemargin 0pt
\topmargin -1.4 cm \textheight 24.5 truecm \textwidth 16.0 truecm
\parskip 4pt
\usepackage{stmaryrd}
\usepackage{enumerate}
\usepackage{hyperref}
\usepackage{amsthm}

\usepackage{color}
\usepackage{lineno}
\usepackage{graphicx}
\usepackage{ae}
\usepackage{amsmath}
\usepackage{amssymb}
\usepackage{latexsym}
\usepackage{url}
\usepackage{epsfig}
\usepackage{mathrsfs}
\usepackage{amsfonts}
\usepackage{amsthm}

\newcommand{\pmat}[1]{\begin{pmatrix}#1\end{pmatrix}}




\newcommand{\NEPS}{\mathrm{NEPS}}

\def\mod{{\rm mod}}

\newtheorem*{theorem*}{Theorem}

\newtheorem{theorem}{Theorem}[section]

\newtheorem{lemma}[theorem]{Lemma}

\newtheorem{cor}[theorem]{Corollary}

\newtheorem*{assumption*}{Assumption}

\newtheorem{example}{Example}

\theoremstyle{definition}
\newtheorem{definition}{Definition}[section]

\newtheorem{problem}{Problem}


\numberwithin{equation}{section} 
\allowdisplaybreaks  


\def\qed{\hfill$\Box$\vspace{12pt}}



\long\def\delete#1{}


\usepackage{xcolor}
\usepackage[normalem]{ulem}


\begin{document}
\title{Perfect state transfer in NEPS of complete graphs}

\date{}
\author{Yipeng Li$^{a,b}$,
Xiaogang Liu$^{a,c,}$\thanks{Supported by the National Natural Science Foundation of China (Nos. 11601431 and 11871398), the Natural Science Foundation of Shaanxi Province (No. 2020JM-099), the Natural Science Foundation of Qinghai Province  (No. 2020-ZJ-920).}~$^,$\thanks{Corresponding author. Email addresses: pengyl@xust.edu.cn, xiaogliu@nwpu.edu.cn, sgzhang@nwpu.edu.cn, sanming@unimelb.edu.au }~,~
Shenggui Zhang$^{a,c,}$\thanks{Supported by the National Natural Science Foundation of China (Nos.~11571135 and 11671320) and the Fundamental Research Funds for the Central Universities (No. 3102019GHJD003).}~, Sanming Zhou$^{d}$ \\[2mm]
\small $^a$School of Mathematics and Statistics, Northwestern Polytechnical University,\\[-0.8ex]
\small Xi'an, Shaanxi 710072, P.R.~China\\
\small $^b$Department of Applied Mathematics, Xi'an University of Science and Technology,\\[-0.8ex]
\small Xi'an, Shaanxi 710054, P.R.~China\\
{\small $^c$Xi'an-Budapest Joint Research Center for Combinatorics,}\\[-0.8ex]
{\small Northwestern Polytechnical University, Xi'an, Shaanxi 710129, P.R. China}\\
\small $^d$School of Mathematics and Statistics, The University of Melbourne, \\[-0.8ex]
\small Parkville, VIC 3010, Australia
}

\openup 0.5\jot
\maketitle

\begin{abstract}
Perfect state transfer in graphs is a concept arising from quantum physics and quantum computing. Given a graph $G$ with adjacency matrix $A_G$, the transition matrix of $G$ with respect to $A_G$ is defined as $H_{A_{G}}(t) = \exp(-\mathrm{i}tA_{G})$, $t \in \mathbb{R},\ \mathrm{i}=\sqrt{-1}$. We say that perfect state transfer from vertex $u$ to vertex $v$ occurs in $G$ at time $\tau$ if $u \ne v$ and the modulus of the $(u,v)$-entry of $H_{A_G}(\tau)$ is equal to $1$. If the moduli of all diagonal entries of $H_{A_G}(\tau)$ are equal to $1$ for some $\tau$, then $G$ is called periodic with period $\tau$. In this paper we give a few sufficient conditions for NEPS of complete graphs to be periodic or exhibit perfect state transfer.

\smallskip

\emph{Keywords:} Transition matrix; Perfect state transfer; Periodic; NEPS

\emph{Mathematics Subject Classification (2010):} 05C50, 81P68
\end{abstract}

\section{Introduction}

Let $G$ be a graph. Denote by $V(G)$ the vertex set of $G$ and $A_G$ the adjacency matrix of $G$. The \emph{transition matrix} of $G$ with respect to $A_G$ is defined as
$$
H_{A_{G}}(t) = \exp(-\mathrm{i}tA_{G})=\sum_{k\ge0}\frac{(-\mathrm{i})^{k} A^{k}_{G} t^{k}}{k!}, ~ t \in \mathbb{R},~\mathrm{i}=\sqrt{-1}.
$$
Clearly, this is a symmetric and unitary matrix for any $t$. Denote by $H_{A_G}(t)_{u,v}$ the $(u,v)$-entry of $H_{A_{G}}(t)$, where $u, v \in V(G)$. We say that \emph{perfect state transfer} (\emph{PST} for short) from vertex $u$ to vertex $v$ occurs in $G$ at time $\tau$ if $u \ne v$ and $|H_{A_G}(\tau)_{u,v}|=1$. If $|H_{A_G}(\tau)_{u,u}|=1$, then $G$ is called \emph{periodic} at vertex $u$ with period $\tau$. A graph is called \emph{periodic} if it is periodic at every vertex with the same period.

The definitions above arose from quantum physics (see, for example, \cite{SBose, chris1}).
In recent years, the problem of determining which quantum spin networks admit PST has received considerable attention due to its potential applications in quantum information transmission and quantum computing. It is known \cite{FU} that PST can be used as an important approach to universal quantum computing. In \cite{SBose}, Bose first introduced the concept of PST and addressed its importance in quantum computing. In \cite{chris1}, Chirstandl et al. modelled quantum spin networks by graphs in which vertices represent locations of the qubits and edges represent quantum wires between such qubits. In this way the problem of determining which quantum spin networks admit PST can be transformed to the one of characterizing which graphs admit PST. Unfortunately, in general it is difficult to determine whether a given graph exhibits PST. Up until now, only a small number of families of graphs have been proved to admit PST. See \cite{Basic11, SBose, Coutinho15, Godsil12, HPal, Zhou14, Zhou15, SZ} and the survey papers \cite{CGodsil, Stevanovic11} for details.

The purpose of this paper is to study the existence of PST and periodicity in the family of graphs built from complete graphs using the $\NEPS$ operation. As usual we use $K_n$, $P_n$ and $C_n$ to denote the complete graph, the path and the cycle on $n$ vertices, respectively. Denote the elementary abelian $2$-group of rank $d \ge 1$ by $\mathbb{Z}_2^{d}$ and its identity element by $\mathbf{0}=(0,\ldots,0)$, where $\mathbb{Z}_2 = \{0,1\}$ is the group of integers modulo $2$. We will omit the rank $d$ in $\mathbf{0}$ as it can be easily figured out in the context.

\begin{definition}
(Cvetkovi\'{c} et al. \cite[Definition 2.5.1]{SG1})
\label{DefNEPS1}
Let $G_1, \ldots,G_d$ be graphs, and let $\emptyset\neq \mathcal{A}\subseteq \mathbb{Z}_2^{d}\setminus \{\mathbf{0}\}$, where $d \ge 1$ and $\mathbf{0}=(0,\ldots,0) \in \mathbb{Z}_2^{d}$. The NEPS (non-complete extended $p$-sum) of $G_1,\ldots,G_d$ with \emph{basis} $\mathcal{A}$, denoted by $\NEPS(G_1,\ldots,G_d;\mathcal{A})$, is the graph with vertex set $V(G_{1})\times \cdots \times V(G_{d})$ in which two vertices $(u_{1}, \ldots, u_{d})$ and $(v_{1}, \ldots, v_{d})$ are adjacent if and only if there exists
$(a_{1}, \ldots, a_{d})\in \mathcal{A}$ such that $u_{i}=v_{i}$ whenever $a_{i}=0$ and $u_{i}$ is adjacent to $v_{i}$ in $G_{i}$ whenever $a_{i}=1$.
\end{definition}

The notion of NEPS is a generalization of several graph operations such as tensor product (also known as direct product, Kronecker product, categorical product, etc. in the literature \cite{ImrichK00}), Cartesian product, and strong product. In fact, $\NEPS(G_1,\ldots,G_d; \{(1, \ldots, 1)\})$ is simply the \emph{tensor product} $G_1 \otimes \cdots \otimes G_d$. If $\mathcal{A} = \{(1, 0, \ldots, 0), \ldots, (0, 0, \ldots, 1)\}$ is the standard basis of $\mathbb{Z}_2^{d}$, then $\NEPS(G_1,\ldots,G_d;\mathcal{A})$ is the \emph{Cartesian product} $G_1 \Box \cdots \Box G_d$; in particular, $\NEPS(K_{n_1},\ldots,K_{n_d}; \mathcal{A})$ is the \emph{Hamming graph} $H(n_1, \ldots, n_d)$, where $n_1, \ldots, n_d \ge 2$, and $\NEPS(K_{2},\ldots,K_{2}; \mathcal{A})$ is the hypercube $Q_d$ of dimension $d$. In general, for any $\emptyset \neq \mathcal{A}\subseteq \mathbb{Z}_2^{d} \setminus \{\mathbf{0}\}$, $\NEPS(K_{2},\ldots,K_{2}; \mathcal{A})$ is called a \emph{cubelike graph}.

The $\NEPS$ operation provides a useful tool for constructing graphs admitting PST. For example, Pal and Bhattacharjya \cite{HPal1} gave sufficient conditions for the $\NEPS$ of some copies of $P_3$ to admit PST, and Zheng et al. \cite{SZ} obtained sufficient conditions for the $\NEPS$ of some copies of cubes to admit PST. It was proved in \cite[Theorem 1]{AB} and \cite[Theorem 2.3]{CC} that a cubelike graph $\NEPS(K_{2},\ldots,K_{2}; \mathcal{A})$ exhibits PST if $\sum_{\mathbf{a} \in \mathcal{A}} \mathbf{a} \ne \mathbf{0}$, where the sum on the left-hand side is performed in $\mathbb{Z}_2^{d}$, with each coordinate modulo $2$. On the other hand, it is known \cite[Corollary 2]{SZ} that any NEPS of complete graphs $K_{n_1},\ldots,K_{n_{d}}$ with $n_{i} \geq 3$ for each $i$ cannot exhibit PST. In the general case when $n_{i} \ge 3$ for at least one $i$ and $n_{j} = 2$ for at least one $j$, it is unknown whether $\NEPS(K_{n_1},\ldots, K_{n_d}; \mathcal{A})$ admits PST. We aim to study this problem and the related periodicity problem in this paper. As we will see shortly, our results extend the known results \cite{AB,CC} on cubelike graphs and enlarge the collection of graphs that admit PST. In addition, the methods developed in this paper can be used to give another proof of these results \cite{AB,CC} on cubelike graphs.

In the sequel we always assume that $d, r, n_1, \ldots, n_d$ are integers such that
$$
d \ge 1,\ r \ge 1,\ n_1, \ldots, n_d \ge 3.
$$
The greatest common divisor of $n_1,\ldots,n_d$ is denoted by $\gcd(n_1,\ldots,n_d)$.
Write $r\odot K_2 = \underbrace{K_2,\ldots, K_2}_{r}$, and for $\emptyset\neq \mathcal{A}\subseteq \mathbb{Z}_2^{d+r}\setminus \{\mathbf{0}\}$, set
$$
\NEPS(K_{n_1},\ldots,K_{n_{d}},r\odot K_2;\mathcal{A}) = \NEPS(K_{n_1},\ldots,K_{n_{d}}, \underbrace{K_2,\ldots, K_2}_{r}; \mathcal{A}).
$$
As usual, the \emph{Hamming weight} of a vector $\mathbf{a} = (a_{1}, \ldots, a_{m}) \in \mathbb{Z}_2^m$, where $m \ge 1$, is defined as
$$
w(\mathbf{a}) = w(a_{1}, \ldots, a_{m}) = |\{i: 1 \le i \le m,\ a_i = 1\}|.
$$
We make the following assumption throughout the paper.

\begin{assumption*}
{\em
For $\mathcal{A} \subseteq \mathbb{Z}_2^{m}$, where $m \ge 1$, we set
$$
\mathbf{c}(\mathcal{A}) = \sum_{\mathbf{a} \in \mathcal{A}} \mathbf{a},
$$
where the addition is performed in $\mathbb{Z}_2^{m}$ (that is, each coordinate takes modulo $2$). We treat $V(K_{n_i})$ as the cyclic group $\mathbb{Z}_{n_i}$ for $1 \le i \le d$ and treat $V(K_{2})$ as $\mathbb{Z}_{2}$. In this way each vertex of $\NEPS(K_{n_1},\ldots,K_{n_{d}},r\odot K_2;\mathcal{A})$ is treated as an element $\mathbf{u} = (u_1, \ldots, u_d, u_{d+1}, \ldots, u_{d+r}) \in \mathbb{Z}_{n_1} \times \cdots \times \mathbb{Z}_{n_d} \times \mathbb{Z}_{2}^r$. Moreover, for any $\mathbf{a} = (a_1, \ldots, a_d, a_{d+1}, \ldots, a_{d+r}) \in \mathbb{Z}_{2}^{d+r}$, $\mathbf{u} + \mathbf{a}$ is understood as the element of $\mathbb{Z}_{n_1} \times \cdots \times \mathbb{Z}_{n_d} \times \mathbb{Z}_{2}^r$ obtained by coordinate-wise addition, with the $i$th coordinate modulo $n_i$ for $1 \le i \le d$ and the $j$th coordinate modulo $2$ for $d+1 \le j \le d+r$. That is,
$$
\mathbf{u} + \mathbf{a} = ((u_1 + a_1)\ \mod\ n_1, \ldots, (u_d + a_d) \ \mod\ n_d, (u_{d+1} + a_{d+1}) \ \mod\ 2, \ldots, (u_{d+r} + a_{d+r}) \ \mod\ 2).
$$}
\end{assumption*}

Using the notation and assumption above, we now present our results in the following three theorems.

\begin{theorem}
\label{Theorem3}
Let $\emptyset \neq\mathcal{A} \subseteq \mathbb{Z}_2^{d+r}\setminus \{\mathbf{0}\}$. Suppose that the last $r$ coordinates of each element of $\mathcal{A}$ are equal to $0$. Then $\NEPS(K_{n_1},\ldots,K_{n_d},r\odot K_2;\mathcal{A})$ is periodic with period $\frac{2\pi}{h}$, where $h=\gcd(n_1,\ldots,n_d)$.
\end{theorem}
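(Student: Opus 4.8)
The plan is to recognize $G=\NEPS(K_{n_1},\ldots,K_{n_d},r\odot K_2;\mathcal{A})$ as a Cayley graph on the abelian group $\Gamma=\mathbb{Z}_{n_1}\times\cdots\times\mathbb{Z}_{n_d}\times\mathbb{Z}_2^{r}$ and to reduce periodicity to a single congruence among its (integer) eigenvalues. Writing $A_{K_{n_i}}$ for the relevant adjacency matrices, the NEPS adjacency matrix is
\[
A_G=\sum_{\mathbf{a}\in\mathcal{A}}A_{K_{n_1}}^{a_1}\otimes\cdots\otimes A_{K_{n_d}}^{a_d}\otimes A_{K_2}^{a_{d+1}}\otimes\cdots\otimes A_{K_2}^{a_{d+r}},
\]
which is simultaneously diagonalized by the characters of $\Gamma$. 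Viewing $K_{n_i}=\Cay(\mathbb{Z}_{n_i},\mathbb{Z}_{n_i}\setminus\{0\})$, its eigenvalue on the trivial character is $n_i-1$ and on every nontrivial character is $-1$; the same holds for the $K_2$ factors. Hence the eigenvalue of $A_G$ attached to the character indexed by $\mathbf{k}=(k_1,\ldots,k_{d+r})$ is
\[
\Lambda_{\mathbf{k}}=\sum_{\mathbf{a}\in\mathcal{A}}\prod_{i=1}^{d+r}\lambda_{i,k_i}^{a_i},
\]
where $\lambda_{i,k_i}\in\{n_i-1,-1\}$ and $\lambda_{i,k_i}^{0}:=1$; in particular every $\Lambda_{\mathbf{k}}$ is an integer.

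Because $G$ is a Cayley graph on an abelian group, all diagonal entries of $H_{A_G}(t)$ coincide, their common value being $\frac{1}{|\Gamma|}\sum_{\mathbf{k}}\exp(-\mathrm{i}t\Lambda_{\mathbf{k}})$; it therefore suffices to test periodicity at a single vertex. As this is an average of $|\Gamma|$ complex numbers of modulus $1$, the triangle inequality shows that $\bigl|H_{A_G}(\tau)_{\mathbf{u},\mathbf{u}}\bigr|=1$ exactly when all the numbers $\exp(-\mathrm{i}\tau\Lambda_{\mathbf{k}})$ are equal. For $\tau=2\pi/h$ this is equivalent to $h\mid(\Lambda_{\mathbf{k}}-\Lambda_{\mathbf{k}'})$ for every pair $\mathbf{k},\mathbf{k}'$, i.e.\ to the residue of $\Lambda_{\mathbf{k}}$ modulo $h$ being independent of $\mathbf{k}$.

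The crux is to establish this congruence, and here the hypothesis that the last $r$ coordinates of every $\mathbf{a}\in\mathcal{A}$ vanish is used: in each product $\prod_i\lambda_{i,k_i}^{a_i}$ the factors with $i>d$ are absent (their exponent $a_i$ equals $0$), so only the eigenvalues $\lambda_{i,k_i}\in\{n_i-1,-1\}$ of the $K_{n_i}$ with $1\le i\le d$ occur. Since $h=\gcd(n_1,\ldots,n_d)$ divides each $n_i$, we have $n_i-1\equiv-1\pmod h$, hence $\lambda_{i,k_i}\equiv-1\pmod h$ whether or not $k_i$ is trivial. Consequently $\prod_i\lambda_{i,k_i}^{a_i}\equiv(-1)^{w(\mathbf{a})}\pmod h$, and summing over $\mathcal{A}$ gives $\Lambda_{\mathbf{k}}\equiv\sum_{\mathbf{a}\in\mathcal{A}}(-1)^{w(\mathbf{a})}\pmod h$, which does not depend on $\mathbf{k}$. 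This is precisely the required congruence, so $\bigl|H_{A_G}(2\pi/h)_{\mathbf{u},\mathbf{u}}\bigr|=1$ at every vertex $\mathbf{u}$ and $G$ is periodic with period $2\pi/h$.

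The only genuinely delicate point is this last collapse: over the integers the eigenvalues $n_i-1$ and $-1$ of $K_{n_i}$ are distinct, so a priori the $\Lambda_{\mathbf{k}}$ take many values, yet modulo $h$ each active factor contributes the single residue $-1$ because $h\mid n_i$. It is essential that the $K_2$ factors be switched off by the hypothesis: their eigenvalues are $\pm1$, and since $h$ need not divide $2$ the residues $1$ and $-1$ need not coincide modulo $h$, so an active $K_2$ factor could destroy the congruence. All remaining ingredients — the NEPS eigenvalue formula, the character-diagonalization of a Cayley graph on an abelian group, and the triangle-inequality characterization of a unit-modulus diagonal entry — are standard.
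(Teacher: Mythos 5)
Your proof is correct, but it follows a genuinely different route from the paper's. The paper factorizes the transition matrix as $H_{\mathcal{A}}(t)=\prod_{\mathbf{a}\in\mathcal{A}}H_{\mathbf{a}}(t)$ (its Lemma \ref{8}) and then computes each single-basis-element factor $H_{\mathbf{a}}\bigl(\tfrac{2\pi}{h}\bigr)$ explicitly as $\exp\bigl((-1)^{w(\mathbf{a})-1}\tfrac{2\pi}{h}\mathrm{i}\bigr)I_{2^r n_1\cdots n_d}$, via an induction on $d$ through tensor-product decompositions (Lemmas \ref{3}(b) and \ref{10-1}); multiplying these scalars gives periodicity. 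You instead work with the full spectrum at once: you use the NEPS eigenvalue formula (the paper's Lemma \ref{1}) together with the character diagonalization of the Cayley graph on $\mathbb{Z}_{n_1}\times\cdots\times\mathbb{Z}_{n_d}\times\mathbb{Z}_2^r$ to reduce periodicity at time $\tfrac{2\pi}{h}$ to the single congruence that all integer eigenvalues $\Lambda_{\mathbf{k}}$ agree modulo $h$, which follows from $n_i-1\equiv -1~(\mod~h)$ since $h\mid n_i$. Each step you invoke is sound: the constant diagonal of the spectral idempotents (entries $m_r/|\Gamma|$, since characters are unimodular), the triangle-inequality criterion for a unit-modulus diagonal entry, and the observation that the hypothesis on $\mathcal{A}$ switches off the $K_2$ factors whose eigenvalues $\pm1$ would not collapse modulo $h$. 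Your argument is shorter and more self-contained for this particular theorem, and it even recovers the paper's explicit scalar, since $\Lambda_{\mathbf{k}}\equiv\sum_{\mathbf{a}\in\mathcal{A}}(-1)^{w(\mathbf{a})}~(\mod~h)$ forces $H_{A_G}\bigl(\tfrac{2\pi}{h}\bigr)$ to be that common phase times the identity. What the paper's heavier machinery buys is reusability: the product formula and the single-$\mathbf{a}$ transition matrices are needed again in Theorems \ref{Theorem4} and \ref{10-3}, where active $K_2$ factors make a purely mod-$h$ eigenvalue argument insufficient and one must track the actual matrices $\bigotimes_j(A_{K_2})^{a_j}$ rather than residues.
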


\begin{theorem}
\label{Theorem4}
Let $\emptyset\neq\mathcal{A}\subseteq\mathbb{Z}_2^{d+r}\setminus \{\mathbf{0}\}$. Suppose that the first $d$ coordinates of each element of $\mathcal{A}$ are equal to $0$. Denote $\mathbf{c}(\mathcal{A}) = (0, \ldots, 0, c_{d+1}, \ldots, c_{d+r})$.
\begin{itemize}
\item[\rm (a)]  If $\mathbf{c}(\mathcal{A}) \neq \mathbf{0}$, then $\NEPS(K_{n_1},\ldots,K_{n_d},r\odot K_2;\mathcal{A})$ admits PST from vertex $\mathbf{u}$ to vertex $\mathbf{u}+\mathbf{c}(\mathcal{A})$ at time $\frac{\pi}{2}$, for every vertex $\mathbf{u}$, and its transition matrix $H_{\mathcal{A}}(t)$ at $\frac{\pi}{2}$ is given by
$$
H_{\mathcal{A}}\left(\frac{\pi}{2}\right)
=(-\mathrm{i})^{\mid \mathcal{A}\mid}I_{n_1\cdots n_d}\otimes\left(\bigotimes_{j=d+1}^{d+r}(A_{K_2})^{c_j}\right).
$$
 \item[\rm (b)]
If $\mathbf{c}(\mathcal{A}) = \mathbf{0}$, then $\NEPS(K_{n_1},\ldots,K_{n_d},r\odot K_2;\mathcal{A})$ is periodic with period $\frac{\pi}{2}$, and its transition matrix $H_{\mathcal{A}}(t)$ at $\frac{\pi}{2}$ is given by
$$
H_{\mathcal{A}}\left(\frac{\pi}{2}\right)
=(-\mathrm{i})^{\mid \mathcal{A}\mid}I_{2^rn_1\cdots n_d}.
$$
\end{itemize}
\end{theorem}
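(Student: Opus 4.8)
The plan is to exploit the hypothesis that every element of $\mathcal{A}$ vanishes in its first $d$ coordinates, which forces the factors $K_{n_1},\ldots,K_{n_d}$ to contribute only identity blocks to the adjacency matrix. Writing $A$ for the adjacency matrix of $\NEPS(K_{n_1},\ldots,K_{n_d},r\odot K_2;\mathcal{A})$ and using the standard tensor decomposition of a NEPS adjacency matrix (immediate from Definition~\ref{DefNEPS1}),
$$
A = \sum_{\mathbf{a}\in\mathcal{A}} (A_{K_{n_1}})^{a_1}\otimes\cdots\otimes(A_{K_{n_d}})^{a_d}\otimes(A_{K_2})^{a_{d+1}}\otimes\cdots\otimes(A_{K_2})^{a_{d+r}},
$$
the assumption $a_1=\cdots=a_d=0$ collapses the first $d$ tensor slots to identities and yields $A = I_{n_1\cdots n_d}\otimes B$, where $B := \sum_{\mathbf{a}\in\mathcal{A}}\bigotimes_{j=d+1}^{d+r}(A_{K_2})^{a_j}$. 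Since $(I_{n_1\cdots n_d}\otimes B)^k = I_{n_1\cdots n_d}\otimes B^k$, the defining series for the transition matrix factors as $H_{\mathcal{A}}(t)=I_{n_1\cdots n_d}\otimes\exp(-\mathrm{i}tB)$, so the whole problem reduces to evaluating $\exp(-\mathrm{i}tB)$ at $t=\tfrac{\pi}{2}$.

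The core computation is to show $\exp(-\mathrm{i}\tfrac{\pi}{2}B)=(-\mathrm{i})^{\mid\mathcal{A}\mid}\bigotimes_{j=d+1}^{d+r}(A_{K_2})^{c_j}$. First I would record two elementary facts about the summands $M_{\mathbf{a}}:=\bigotimes_{j=d+1}^{d+r}(A_{K_2})^{a_j}$: they pairwise commute, because $M_{\mathbf{a}}M_{\mathbf{b}}=\bigotimes_j(A_{K_2})^{a_j+b_j}=M_{\mathbf{b}}M_{\mathbf{a}}$, and each squares to the identity, since $A_{K_2}^2=I_2$ gives $M_{\mathbf{a}}^2=\bigotimes_j(A_{K_2})^{2a_j}=I_{2^r}$. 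Commutativity lets me split the exponential of the sum into a product $\exp(-\mathrm{i}\tfrac{\pi}{2}B)=\prod_{\mathbf{a}\in\mathcal{A}}\exp(-\mathrm{i}\tfrac{\pi}{2}M_{\mathbf{a}})$, and $M_{\mathbf{a}}^2=I$ together with $\exp(-\mathrm{i}t M_{\mathbf{a}})=\cos(t)I-\mathrm{i}\sin(t)M_{\mathbf{a}}$ gives $\exp(-\mathrm{i}\tfrac{\pi}{2}M_{\mathbf{a}})=-\mathrm{i}M_{\mathbf{a}}$. Multiplying these together and collecting tensor slots,
$$
\exp\!\left(-\mathrm{i}\tfrac{\pi}{2}B\right)=(-\mathrm{i})^{\mid\mathcal{A}\mid}\prod_{\mathbf{a}\in\mathcal{A}}M_{\mathbf{a}}=(-\mathrm{i})^{\mid\mathcal{A}\mid}\bigotimes_{j=d+1}^{d+r}(A_{K_2})^{\sum_{\mathbf{a}\in\mathcal{A}}a_j},
$$
and since $A_{K_2}^2=I_2$ only the parity $c_j=\big(\sum_{\mathbf{a}\in\mathcal{A}}a_j\big)\bmod 2$ survives in each exponent. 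Substituting into the factorization $H_{\mathcal{A}}(\tfrac{\pi}{2})=I_{n_1\cdots n_d}\otimes\exp(-\mathrm{i}\tfrac{\pi}{2}B)$ gives exactly the claimed form of $H_{\mathcal{A}}(\tfrac{\pi}{2})$, valid in both cases.

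It remains to read off the two conclusions. In case (a), since $\mathbf{c}(\mathcal{A})\ne\mathbf{0}$ and $A_{K_2}$ is the permutation matrix of the transposition $0\leftrightarrow1$ on $\mathbb{Z}_2$, the factor $\bigotimes_{j}(A_{K_2})^{c_j}$ is the permutation matrix of the translation $\mathbf{u}'\mapsto\mathbf{u}'+(c_{d+1},\ldots,c_{d+r})$ on $\mathbb{Z}_2^r$; tensoring with $I_{n_1\cdots n_d}$, the matrix $H_{\mathcal{A}}(\tfrac{\pi}{2})$ sends the standard basis vector indexed by $\mathbf{u}$ to $(-\mathrm{i})^{\mid\mathcal{A}\mid}$ times the one indexed by $\mathbf{u}+\mathbf{c}(\mathcal{A})$, so (using symmetry of $H_{\mathcal{A}}$) its $(\mathbf{u},\mathbf{u}+\mathbf{c}(\mathcal{A}))$-entry has modulus $1$ with $\mathbf{u}+\mathbf{c}(\mathcal{A})\ne\mathbf{u}$, which is precisely PST at time $\tfrac{\pi}{2}$. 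In case (b), $\mathbf{c}(\mathcal{A})=\mathbf{0}$ makes every exponent $c_j$ even, so $\bigotimes_j(A_{K_2})^{c_j}=I_{2^r}$ and $H_{\mathcal{A}}(\tfrac{\pi}{2})=(-\mathrm{i})^{\mid\mathcal{A}\mid}I_{2^rn_1\cdots n_d}$, whose diagonal entries all have modulus $1$ at the common time $\tfrac{\pi}{2}$, i.e.\ the graph is periodic with period $\tfrac{\pi}{2}$.

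I expect the only genuinely delicate point to be the justification that the exponential of the sum factors as a product of exponentials; this is exactly where the pairwise commutativity of the $M_{\mathbf{a}}$ is indispensable, and it is the first-$d$-coordinates-zero hypothesis that confines all summands to the mutually commuting algebra generated by the $A_{K_2}$ slots, so that $\exp(-\mathrm{i}\tfrac{\pi}{2}(M_{\mathbf{a}}+M_{\mathbf{b}}))=\exp(-\mathrm{i}\tfrac{\pi}{2}M_{\mathbf{a}})\exp(-\mathrm{i}\tfrac{\pi}{2}M_{\mathbf{b}})$ is legitimate. Everything else is bookkeeping with Kronecker products and the relation $A_{K_2}^2=I_2$.
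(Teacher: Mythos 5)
Your proposal is correct and follows essentially the same route as the paper: your splitting of $\exp(-\mathrm{i}\tfrac{\pi}{2}B)$ into a product over $\mathbf{a}\in\mathcal{A}$ is exactly the paper's Lemma \ref{8}, and your evaluation $\exp(-\mathrm{i}\tfrac{\pi}{2}M_{\mathbf{a}})=-\mathrm{i}M_{\mathbf{a}}$ for the involution $M_{\mathbf{a}}$ is the content of Lemmas \ref{Lemma2.5}(c) and \ref{10-2}(a), there derived via the $\pm1$ spectral idempotents rather than the $\cos/\sin$ formula. The only cosmetic difference is that you factor out $I_{n_1\cdots n_d}$ once at the level of the adjacency matrix, whereas the paper does so in each single-element transition matrix.
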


Note that $\NEPS(K_{n_1},\ldots,K_{n_d},r\odot K_2;\mathcal{A})$ in Theorem \ref{Theorem4} is the vertex-disjoint union of $n_1n_2\cdots n_r$ copies of the cubelike graph $\NEPS(r\odot K_2;\mathcal{A}_2)$, where $\mathcal{A}_2$ consists of those $(a_1,\ldots,a_r) \in \mathbb{Z}_2^{r}$ such that $(0,\ldots,0,a_1,\ldots,a_r)\in \mathcal{A}$.


For any $\mathcal{A}\subseteq \mathbb{Z}_2^{d+r}$, define
$$
\mathcal{A}^{*} =\{(a_1,\ldots,a_d): (a_1,\ldots,a_d, a_{d+1},\ldots,a_{d+r}) \in \mathcal{A}\}.
$$
For any $\mathbf{a} = (a_1,\ldots,a_d) \in \mathbb{Z}_2^{d}$, define
$$
\mathcal{A}_{-}{(\mathbf{a})} =\{(x_{d+1},\ldots,x_{d+r}): (x_1,\ldots,x_d, x_{d+1},\ldots,x_{d+r}) \in \mathcal{A} \text{ and } (x_1,\ldots,x_d) = \mathbf{a}\}.
$$
In particular,
$$
\mathcal{A}_{-}{(\mathbf{0})} = \{(a_{d+1},\ldots,a_{d+r}): (0,\ldots,0,a_{d+1},\ldots,a_{d+r})\in \mathcal{A}\}.
$$
Note that if $\mathbf{a} \not \in \mathcal{A}^{*}$ then $\mathcal{A}_{-}{(\mathbf{a})} = \emptyset$.

\begin{theorem}
\label{10-3}
Let $\emptyset\not=\mathcal{A}\subseteq \mathbb{Z}_2^{d+r}\setminus \{\mathbf{0}\}$. Suppose that $\mathcal{A}$ contains at least one element whose last $r$ coordinates are not all $0$ and at least one element whose first $d$ coordinates are not all $0$. Set $\mathbf{c}(\mathcal{A}) = (b_1, \ldots, b_d, b_{d+1}, \ldots,b_{d+r})$ and $\mathbf{c}(\mathcal{A}_{-}{(\mathbf{0})}) = (c_{d+1},\ldots,c_{d+r})$. Denote $G = \NEPS(K_{n_1},\ldots,K_{n_{d}},r\odot K_2;\mathcal{A})$ and $h=\gcd(n_1,\ldots,n_d)$.
\begin{itemize}
\item[\rm (a)]
Suppose that $h$ is odd. Then $G$ is periodic with period $2\pi$.
Moreover, if every $\mathbf{a} \in \mathcal{A}^{*}\setminus \{\mathbf{0}\}$ satisfies $|\mathcal{A}_{-}{(\mathbf{a})}| \equiv 0~(\mod~4)$ and $\mathbf{c}(\mathcal{A}_{-}{(\mathbf{a})}) = \mathbf{0}$, then the following hold:
\begin{itemize}
\item[\rm (a.i)] if $(c_{d+1},\ldots,c_{d+r}) \ne \mathbf{0}$, then $G$ admits PST from $\mathbf{u}$ to $\mathbf{u}+(0,\ldots,0,c_{d+1},\ldots,c_{d+r})$ at time $\frac{\pi}{2}$, for every vertex $\mathbf{u}$;
\item[\rm (a.ii)] if $(c_{d+1},\ldots,c_{d+r}) = \mathbf{0}$, then $G$ is periodic with period $\frac{\pi}{2}$.
\end{itemize}

\item[\rm (b)] Suppose that $h$ is even. Then $G$ is periodic with period $\pi$. Moreover, if $h$ is not a multiple of $4$ and every $\mathbf{a} \in \mathcal{A}^{*}\setminus \{\mathbf{0}\}$ satisfies $|\mathcal{A}_{-}{(\mathbf{a})}| \equiv 0~(\mod~2)$ and $\mathbf{c}(\mathcal{A}_{-}{(\mathbf{a})}) = \mathbf{0}$, then the following hold:
\begin{itemize}
\item[\rm (b.i)] if $(c_{d+1},\ldots,c_{d+r}) \ne \mathbf{0}$, then $G$ admits PST from $\mathbf{u}$ to $\mathbf{u}+(0,\ldots,0,c_{d+1},\ldots,c_{d+r})$ at time $\frac{\pi}{2}$, for every vertex $\mathbf{u}$;
\item[\rm (b.ii)] if $(c_{d+1},\ldots,c_{d+r}) = \mathbf{0}$, then $G$ is periodic with period $\frac{\pi}{2}$.
\end{itemize}

\item[\rm (c)] Suppose that $h$ is a multiple of $4$. Then the following hold:
\begin{itemize}
\item[\rm (c.i)]
if $(b_{d+1},\ldots,b_{d+r})\neq \mathbf{0}$, then $G$ admits PST from $\mathbf{u}$ to $\mathbf{u}+(0,\ldots,0,b_{d+1},\ldots,b_{d+r})$ at time $\frac{\pi}{2}$, for every vertex $\mathbf{u}$;
\item[\rm (c.ii)] if $(b_{d+1},\ldots,b_{d+r})=\mathbf{0}$, then $G$ is periodic with period $\frac{\pi}{2}$.
\end{itemize}
\end{itemize}
\end{theorem}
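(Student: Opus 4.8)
The plan is to diagonalise $A_G$ through the characters of the underlying abelian group and to translate both periodicity and PST into congruences satisfied by the eigenvalues. Since $G=\NEPS(K_{n_1},\ldots,K_{n_d},r\odot K_2;\mathcal{A})$ is a Cayley graph on $\Gamma=\mathbb{Z}_{n_1}\times\cdots\times\mathbb{Z}_{n_d}\times\mathbb{Z}_2^r$, its eigenvectors are the characters $\chi_{\mathbf{s}}$ indexed by $\mathbf{s}=(s_1,\ldots,s_{d+r})$, and the eigenvalue attached to $\chi_{\mathbf{s}}$ is $\lambda_{\mathbf{s}}=\sum_{\mathbf{a}\in\mathcal{A}}\prod_{i=1}^{d+r}\lambda_{i,s_i}^{a_i}$, where $\lambda_{i,s_i}\in\{n_i-1,-1\}$ for $i\le d$ (according as $s_i=0$ or not) and $\lambda_{i,s_i}\in\{1,-1\}$ for $i>d$. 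In particular every $\lambda_{\mathbf{s}}$ is an integer. Writing the transition matrix entrywise as $H_{\mathcal{A}}(t)_{\mathbf{u},\mathbf{v}}=\frac{1}{|\Gamma|}\sum_{\mathbf{s}}e^{-\mathrm{i}t\lambda_{\mathbf{s}}}\,\overline{\chi_{\mathbf{s}}(\mathbf{v}-\mathbf{u})}$, and using that a mean of unit-modulus numbers has modulus $1$ if and only if they all coincide, I would reduce the whole theorem to statements about the $\lambda_{\mathbf{s}}$ modulo small integers. Writing $\mathbf{s}''=(s_{d+1},\ldots,s_{d+r})$, these are: $G$ is periodic with period $2\pi/N$ iff $\lambda_{\mathbf{s}}$ is constant modulo $N$; and, since $\overline{\chi_{\mathbf{s}}(\mathbf{v}-\mathbf{u})}=(-1)^{\mathbf{s}''\cdot\mathbf{c}''}$ depends only on $\mathbf{v}-\mathbf{u}=(\mathbf{0},\mathbf{c}'')$, $G$ admits PST at time $\tfrac{\pi}{2}$ from every $\mathbf{u}$ to $\mathbf{u}+(\mathbf{0},\mathbf{c}'')$ iff $\lambda_{\mathbf{s}}-2(\mathbf{s}''\cdot\mathbf{c}'')$ is constant modulo $4$.

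The arithmetic core is the elementary identity $(-1)^{x}=1-2(x\bmod 2)$, which gives, for any $\mathcal{B}\subseteq\mathbb{Z}_2^r$,
\[
\sum_{\mathbf{b}\in\mathcal{B}}(-1)^{\mathbf{b}\cdot\mathbf{s}''}\equiv |\mathcal{B}|-2\big(\mathbf{c}(\mathcal{B})\cdot\mathbf{s}''\big)\pmod 4 .
\]
I would then group $\mathcal{A}$ by its first $d$ coordinates: for $\mathbf{a}^*\in\mathcal{A}^*$ the factor over the last $r$ coordinates is $Q(\mathbf{a}^*,\mathbf{s}'')=\sum_{\mathbf{a}''\in\mathcal{A}_{-}(\mathbf{a}^*)}(-1)^{\mathbf{a}''\cdot\mathbf{s}''}$ and the factor over the first $d$ coordinates is $P(\mathbf{a}^*,\mathbf{s}')=\prod_{i\le d,\,a^*_i=1}\lambda_{i,s_i}$, so that
\[
\lambda_{\mathbf{s}}=Q(\mathbf{0},\mathbf{s}'')+\sum_{\mathbf{a}^*\in\mathcal{A}^*\setminus\{\mathbf{0}\}}P(\mathbf{a}^*,\mathbf{s}')\,Q(\mathbf{a}^*,\mathbf{s}'').
\]
By the identity above, $Q(\mathbf{0},\mathbf{s}'')\equiv|\mathcal{A}_{-}(\mathbf{0})|-2(\mathbf{c}''\cdot\mathbf{s}'')\pmod 4$ with $\mathbf{c}''=\mathbf{c}(\mathcal{A}_{-}(\mathbf{0}))$ already supplies exactly the linear form producing PST with shift $\mathbf{c}''$; the remainder of the argument is to control the sum over $\mathbf{a}^*\neq\mathbf{0}$ modulo $2$ and modulo $4$.

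The behaviour of $P(\mathbf{a}^*,\mathbf{s}')$ modulo $4$ is governed entirely by $h=\gcd(n_1,\ldots,n_d)$, since $h\mid n_i$. When $h$ is odd, the hypotheses $|\mathcal{A}_{-}(\mathbf{a}^*)|\equiv0\pmod4$ and $\mathbf{c}(\mathcal{A}_{-}(\mathbf{a}^*))=\mathbf{0}$ give $Q(\mathbf{a}^*,\mathbf{s}'')\equiv0\pmod4$, so each $\mathbf{a}^*\neq\mathbf{0}$ term vanishes modulo $4$ irrespective of $P$, while period $2\pi$ is automatic from integrality of the $\lambda_{\mathbf{s}}$. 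When $h$ is even all $n_i$ are even, hence every $n_i-1$ is odd and $P(\mathbf{a}^*,\mathbf{s}')$ is odd; this forces $\lambda_{\mathbf{s}}\equiv|\mathcal{A}|\pmod2$ for all $\mathbf{s}$ (period $\pi$), and when $h\equiv2\pmod4$ the hypotheses $|\mathcal{A}_{-}(\mathbf{a}^*)|\equiv0\pmod2$ and $\mathbf{c}(\mathcal{A}_{-}(\mathbf{a}^*))=\mathbf{0}$ make $Q(\mathbf{a}^*,\mathbf{s}'')\equiv|\mathcal{A}_{-}(\mathbf{a}^*)|\pmod4$ even, and multiplying an even number by the odd $P$ leaves the product constant modulo $4$. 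When $4\mid h$ every factor satisfies $n_i-1\equiv-1\pmod4$, so $P(\mathbf{a}^*,\mathbf{s}')\equiv(-1)^{w(\mathbf{a}^*)}\pmod4$ becomes independent of $\mathbf{s}'$; then $\lambda_{\mathbf{s}}\equiv\sum_{\mathbf{a}\in\mathcal{A}}(-1)^{w(\mathbf{a}^*)}(-1)^{\mathbf{a}''\cdot\mathbf{s}''}\pmod4$, and a second application of the identity yields $\lambda_{\mathbf{s}}\equiv \mathrm{const}-2(\mathbf{b}''\cdot\mathbf{s}'')\pmod4$ with $\mathbf{b}''=(b_{d+1},\ldots,b_{d+r})$, requiring no extra hypotheses. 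In every case $\mathbf{c}''\neq\mathbf{0}$ (resp.\ $\mathbf{b}''\neq\mathbf{0}$) gives PST at $\tfrac{\pi}{2}$ to the stated vertex, while $\mathbf{c}''=\mathbf{0}$ (resp.\ $\mathbf{b}''=\mathbf{0}$) gives period $\tfrac{\pi}{2}$. I expect the delicate point to be the case $h\equiv2\pmod4$: here neither $P$ nor $Q$ is individually constant modulo $4$, and one must argue that their product is, which is precisely where a parity hypothesis on $|\mathcal{A}_{-}(\mathbf{a}^*)|$ (rather than the mod-$4$ hypothesis used for odd $h$) suffices, the oddness of $P$ absorbing the missing factor of $2$.
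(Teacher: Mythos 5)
Your proposal is correct, and it takes a genuinely different route from the paper. The paper works multiplicatively: it factors $H_{\mathcal{A}}(t)=\prod_{\mathbf{a}\in\mathcal{A}}H_{\mathbf{a}}(t)$ (Lemma \ref{8}), computes each single-element factor at $2\pi$, $\pi$ or $\tfrac{\pi}{2}$ via explicit spectral idempotents and Kronecker-product identities (Lemmas \ref{4}, \ref{10-1}, \ref{10-2}), and handles the grouped sets $\mathcal{A}_{+}(\mathbf{x})$ with a separate binomial-expansion computation (Lemma \ref{Idempotent}); the outcome is a closed form for $H_{\mathcal{A}}(\tfrac{\pi}{2})$ as a scalar times $I_{n_1\cdots n_d}\otimes\bigotimes_j(A_{K_2})^{c_j}$. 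You instead diagonalise once over the characters of $\mathbb{Z}_{n_1}\times\cdots\times\mathbb{Z}_{n_d}\times\mathbb{Z}_2^r$, note that all eigenvalues $\lambda_{\mathbf{s}}$ are integers, and reduce both periodicity and PST to the single arithmetic criterion that $\lambda_{\mathbf{s}}+2(\mathbf{s}''\cdot\mathbf{c}'')$ be constant modulo $4$ (resp.\ $\lambda_{\mathbf{s}}$ constant modulo $N$), which you then verify case by case using $(-1)^x=1-2(x\bmod 2)$; I checked the three congruence computations (including the delicate $h\equiv 2\pmod 4$ step, where the oddness of $P$ and evenness of $Q$ give $PQ\equiv Q\pmod 4$) and they are sound. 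Your approach buys a necessary-and-sufficient eigenvalue criterion and in fact proves the refined conclusion of part (b) without the hypothesis that $h$ is not a multiple of $4$; what it does not deliver, and the paper's computation does, is the explicit transition matrix $H_{\mathcal{A}}(\tfrac{\pi}{2})$ itself, which the paper reuses elsewhere (e.g.\ Equations (\ref{10-3-8})--(\ref{LEq:10121}) and Theorem \ref{cubelike1}). One presentational caveat: to make the "modulus $1$ iff all summands coincide'' reduction airtight you should state explicitly that the entries of $H_{\mathcal{A}}(t)$ are convex combinations (with equal weights $1/|\Gamma|$) of unit-modulus numbers of the form $e^{-\mathrm{i}(\pi/2)k}$ with $k\in\mathbb{Z}$, so that distinct residues of $k$ modulo $4$ give distinct values; this is where integrality of the eigenvalues is essential.
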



Recall that, for $\mathcal{A} = \{(1, 0, \ldots, 0), \ldots, (0, 0, \ldots, 1)\} \subset \mathbb{Z}_2^{d+r}\setminus \{\mathbf{0}\}$, $\NEPS(K_{n_1},\ldots,K_{n_{d}},r\odot K_2;\mathcal{A})$ is the Hamming graph $H(n_1, \ldots, n_d, 2,\ldots, 2)$, which is written as $H(n_1, \ldots, n_d, r \odot 2)$ in the sequel. Theorem \ref{10-3} implies the following corollary.

\begin{cor}
\label{corol:Hamming}
Let $h=\gcd(n_1,\ldots,n_d)$.
\begin{itemize}
\item[\rm (a)]
If $h$ is odd, then $H(n_1, \ldots, n_d, r \odot 2)$ is periodic with period $2\pi$.
\item[\rm (b)]
If $h$ is even, then $H(n_1, \ldots, n_d, r \odot 2)$ is periodic with period $\pi$.
\item[\rm (c)] If $h$ is a multiple of $4$, then $H(n_1, \ldots, n_d, r \odot 2)$ admits PST from vertex $\mathbf{u}$ to vertex $\mathbf{u}+(0,\ldots,0,1,\ldots,1)$ at time $\frac{\pi}{2}$, for every vertex $\mathbf{u}$.
\end{itemize}
\end{cor}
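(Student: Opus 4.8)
The plan is to recognize Corollary~\ref{corol:Hamming} as the specialization of Theorem~\ref{10-3} to the standard basis $\mathcal{A} = \{\mathbf{e}_1, \ldots, \mathbf{e}_{d+r}\}$ of $\mathbb{Z}_2^{d+r}$, where $\mathbf{e}_k$ denotes the $k$-th unit vector, and then to read off each conclusion by computing the combinatorial data attached to this particular $\mathcal{A}$.

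First I would verify that the standing hypotheses of Theorem~\ref{10-3} hold. Since $d \ge 1$, the element $\mathbf{e}_1$ has its first $d$ coordinates not all $0$; since $r \ge 1$, the element $\mathbf{e}_{d+1}$ has its last $r$ coordinates not all $0$. Hence $\mathcal{A}$ contains both required kinds of elements, so Theorem~\ref{10-3} applies to $H(n_1, \ldots, n_d, r \odot 2)$.

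Next I would compute the quantities controlling the three cases. Summing all $d+r$ unit vectors in $\mathbb{Z}_2^{d+r}$ gives $\mathbf{c}(\mathcal{A}) = (1, \ldots, 1)$, so in the notation of Theorem~\ref{10-3} we have $(b_{d+1}, \ldots, b_{d+r}) = (1, \ldots, 1) \ne \mathbf{0}$. The three parts then follow by direct appeal: for $h$ odd, the first assertion of Theorem~\ref{10-3}(a) yields periodicity with period $2\pi$, giving part~(a); for $h$ even, the first assertion of Theorem~\ref{10-3}(b) yields periodicity with period $\pi$, giving part~(b); and for $h$ a multiple of $4$, the fact that $(b_{d+1}, \ldots, b_{d+r}) \ne \mathbf{0}$ places us in Theorem~\ref{10-3}(c.i), which gives PST from $\mathbf{u}$ to $\mathbf{u} + (0, \ldots, 0, 1, \ldots, 1)$ at time $\frac{\pi}{2}$, giving part~(c).

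The one subtlety---indeed the only place where the argument could misfire---is that the refined ``moreover'' conclusions of Theorem~\ref{10-3}(a) and (b) must not be invoked. For each $i$ with $1 \le i \le d$, the sole element of $\mathcal{A}$ whose first $d$ coordinates equal $\mathbf{e}_i$ (viewed in $\mathbb{Z}_2^d$) is $\mathbf{e}_i$ itself, so $\mathcal{A}_{-}(\mathbf{e}_i) = \{\mathbf{0}\}$ is a singleton and $|\mathcal{A}_{-}(\mathbf{e}_i)| = 1$, which is divisible by neither $2$ nor $4$. Thus the divisibility hypotheses of the refinements fail, and only the base periodicity statements in (a) and (b) may be quoted. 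This is fully consistent with the corollary, which asserts PST only in case~(c).
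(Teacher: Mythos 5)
Your proposal is correct and follows exactly the route the paper intends: the paper simply asserts that Theorem~\ref{10-3} implies the corollary, and your specialization to the standard basis $\mathcal{A}$, with the computation $\mathbf{c}(\mathcal{A})=(1,\ldots,1)$ and the verification of the hypotheses, supplies precisely the omitted details. Your observation that the ``moreover'' refinements of parts (a) and (b) do not apply here (since $|\mathcal{A}_{-}(\mathbf{e}_i)|=1$) is accurate and consistent with the corollary claiming PST only in case (c).
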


We will prove Theorems \ref{Theorem3}--\ref{10-3} in Section \ref{Sec:main} after preliminary discussions in Section \ref{Sec:pre}. 

As will be seen in Examples \ref{Exampe:2} and \ref{Exampe:3} in the last section, the sufficient condition in part (a) of Theorem \ref{Theorem4} is in general not necessary for $\NEPS$ of complete graphs to admit PST, and similarly the sufficient condition in part (c.i) of Theorem \ref{10-3} is in general not necessary. So it is natural to ask for which special families of $\NEPS$ of complete graphs the sufficient conditions in these parts are also necessary. More broadly, we would like to pose the following general problem.
\begin{problem}
Give a necessary and sufficient condition for $\NEPS$ of complete graphs to admit PST.
\end{problem}


\section{Preliminaries}\label{Sec:pre}

In this section eigenvectors of matrices are written as column vectors. Given a vector $\mathbf{x}$, denote by $\mathbf{x}^{T}$ and $\mathbf{x}^{H}$ the transpose and conjugate transpose of $\mathbf{x}$, respectively. Denote by $I_n$ the identity matrix of size $n$.

The spectral decomposition \cite{CGodsil} of symmetric matrices plays a key role in our subsequent discussion. So let us explain it first. Let $G$ be a graph on $n$ vertices. Let $\lambda_1,  \ldots , \lambda_s$ be distinct eigenvalues of $G$ with respective multiplicities $l_1,\ldots,l_s$, where $l_1+\cdots+l_s=n$. Take $\left\{\mathbf{x}_{1}^{(r)},\ldots,\mathbf{x}_{l_r}^{(r)}\right\}$ to be an orthonormal basis of the eigenspace associated with $\lambda_{r}$, $r=1,\ldots,s$, and set
$$
X_r = \pmat{\mathbf{x}_{1}^{(r)} & \cdots & \mathbf{x}_{l_r}^{(r)}}, ~r=1, \ldots , s.
$$
Set
$$
X = \pmat{
      X_1^H \\
      \vdots \\
      X_s^H},
$$
and
$$
Y = \pmat{X_1 & \cdots & X_{s}},
$$
where $X_i^{H}$ is the conjugate transpose of $X_i$ for each $i$.
Clearly, $X_r$ is an $n \times l_r$ matrix and both $X$ and $Y$ are $n \times n$ matrices.
Define
$$
E_r = \sum\limits_{i=1}^{l_r}\mathbf{x}_i^{(r)} (\mathbf{x}_i^{(r)})^H=X_rX_r^H, ~r=1, \ldots , s.
$$
Then $XY=I_n$, $E_r^{2} = E_r$, and $E_{r}E_{r'}=0$ for $r\neq r'$. Since $X$ is invertible, it follows that $YX=I_n$, or equivalently $\sum\limits_{r=1}^{s}E_r=I_n$. Using this and the definition of $E_r$ and $\mathbf{x}_{i}^{(r)}$, one can easily verify (see e.g. \cite{CGodsil}) that
$$
A_G = \sum\limits_{r=1}^{s}\lambda_{r}E_r.
$$
This expression is called the \emph{spectral decomposition} of $A_G$ with respect to distinct eigenvalues (see \cite[p.517]{MAALA}). It follows from this decomposition that
\begin{equation}
\label{H_ADec2}
H_{A_G}(t)=\sum_{k\geq 0}\dfrac{(-\mathrm{i})^{k}A_G^{k}t^{k}}{k!}=\sum_{k\geq 0}\dfrac{(-\mathrm{i})^{k}(\sum_{r=1}^{s}\lambda_{r}^{k}E_{r})t^{k}}{k!} =\sum_{r=1}^{s}\exp(-\mathrm{i}t\lambda_{r})E_r.
\end{equation}

\begin{example}
\label{Exampe:1}
{\em
Consider the complete graph $K_n$ of order $n \geq 3$. We have
\begin{equation}\label{H_ADec3}
E_{1}=\frac{1}{n}\left(
  \begin{array}{rrrrr}
    n-1 & -1 & -1 & \cdots & -1\\
    -1 & n-1 & -1 & \cdots & -1\\
    \vdots & \vdots & \vdots & \vdots & \vdots \\
    -1 & -1 & \cdots & -1 & n-1
  \end{array}
\right),\;
E_{2}=\frac{1}{n}\left(
  \begin{array}{rrrrr}
    1 & 1 & 1 & \cdots & 1\\
    1 & 1 & 1 & \cdots & 1\\
    \vdots & \vdots & \vdots & \vdots & \vdots \\
    1 & 1 & \cdots & 1 & 1
  \end{array}
\right).
\end{equation}
The spectral decomposition of $A_{K_n}$ is given by $A_{K_n}=-E_{1}+(n-1)E_{2}$. By \eqref{H_ADec2},
$$
H_{A_{K_n}}(t) =\exp(\mathrm{i}t)E_1+\exp(-(n-1)\mathrm{i}t)E_2.
$$
Clearly,  if $t=\frac{2 z \pi}{n}, z\in\mathbb{Z} $, then $H_{A_{K_n}}(t)$ has entries of unit modulus. In fact, $H_{A_{K_n}}\left(\frac{2z \pi}{n}\right)$ is a scalar multiple of the identity matrix. Putting $t=\frac{2(nz+g)\pi}{n}, ~g\in\mathbb{Z}, ~0< g\leq n-1,$ we have
$$
  H_{A_{K_n}}\left(\frac{2 (nz+g) \pi}{n}\right) =\exp\left(\frac{2g\pi}{n}\mathrm{i}\right)E_1+\exp\left(\frac{2g\pi}{n}\mathrm{i}\right)E_2
=\exp\left(\frac{2g\pi}{n}\mathrm{i}\right)I_n.
$$
In particular, when $g=0$, we obtain $H_{A_{K_n}}(2 z \pi)=I_n$. Therefore, $K_n$ does not exhibit PST but is periodic with period $t=\frac{2\pi}{n}$.
}
\qed\end{example}

Let $A=(a_{ij})_{m\times n}$ and $B$ be matrices. The \emph{Kronecker product} of $A$ and $B$ is defined as
\begin{align*}
  A\otimes B & =(a_{ij}B)= \left(
         \begin{array}{ccc}
          a_{11}B & \cdots & a_{1n}B \\
           \vdots &  & \vdots \\
           a_{m1}B & \cdots & a_{mn}B \\
         \end{array}
       \right).
\end{align*}
The following properties of the Kronecker product of matrices can be found in \cite[Section 4.2]{Horn91} and \cite{HF}:
\begin{itemize}
  \item[\rm (1)] $(A\otimes B)\otimes C=A\otimes( B\otimes C)$;
  \item[\rm (2)] $(A+B)\otimes C=A\otimes C+B\otimes C$, ~~$C\otimes(A+B)=C\otimes A+C\otimes B$;
  \item[\rm (3)] $(A\otimes B)^T=A^T\otimes B^T$;
  \item [\rm (4)]$(A\otimes B)(C\otimes D)=(AC)\otimes(BD)$;
   \item [\rm (5)] $(kA)\otimes B=A\otimes (kB)=k(A\otimes B)$.
\end{itemize}

\begin{lemma}
\label{3.1}
Let $A = A_G$ for a graph $G$, and let $I$ be the identity matrix of any size. Then
\begin{itemize}
  \item[\rm (a)] $H_{A\otimes I}(t)=H_{A}(t)\otimes I$;
  \item[\rm (b)] $H_{I\otimes A}(t)= I\otimes H_{A}(t)$;
  \item[\rm (c)] $H_{I\otimes A \otimes I}(t)= I\otimes H_{A}(t)\otimes I$.
\end{itemize}
\end{lemma}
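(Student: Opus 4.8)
The plan is to work directly from the power-series definition $H_M(t) = \sum_{k\ge 0}\frac{(-\mathrm{i})^k M^k t^k}{k!}$ together with the Kronecker-product identities (1)--(5) listed above. The crucial observation is that the mixed-product rule (4) lets us compute powers of $A\otimes I$ cleanly: since $(A\otimes I)(A\otimes I) = (A\cdot A)\otimes(I\cdot I) = A^2\otimes I$, a straightforward induction on $k$ gives $(A\otimes I)^k = A^k\otimes I$ for every $k\ge 0$ (the base case $k=0$ reading $I\otimes I = I$). The analogous induction, using (4) again, yields $(I\otimes A)^k = I\otimes A^k$.

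For part (a), I would substitute $(A\otimes I)^k = A^k\otimes I$ into the defining series and factor the fixed identity matrix out of every term. By property (5), $\frac{(-\mathrm{i})^k t^k}{k!}(A^k\otimes I) = \left(\frac{(-\mathrm{i})^k t^k}{k!}A^k\right)\otimes I$, and by the additivity of the Kronecker product in its left argument (property (2)) the same factorization extends to any finite partial sum, so $\sum_{k=0}^{N}\frac{(-\mathrm{i})^k t^k}{k!}(A^k\otimes I) = \left(\sum_{k=0}^{N}\frac{(-\mathrm{i})^k t^k}{k!}A^k\right)\otimes I$. Letting $N\to\infty$ and using that $M\mapsto M\otimes I$ acts entrywise (hence is continuous) gives $H_{A\otimes I}(t) = H_A(t)\otimes I$. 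Part (b) is proved identically, replacing $(A\otimes I)^k = A^k\otimes I$ by $(I\otimes A)^k = I\otimes A^k$.

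Part (c) I would deduce from (a) and (b) rather than redo the computation from scratch. Using associativity (property (1)), write $I\otimes A\otimes I = (I\otimes A)\otimes I$; applying part (a) with $I\otimes A$ playing the role of the first matrix gives $H_{(I\otimes A)\otimes I}(t) = H_{I\otimes A}(t)\otimes I$, and then part (b) gives $H_{I\otimes A}(t) = I\otimes H_A(t)$. Combining these and reusing associativity yields $H_{I\otimes A\otimes I}(t) = I\otimes H_A(t)\otimes I$.

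I do not anticipate a serious obstacle: for a lemma of this kind the computation is essentially forced once the power formula $(A\otimes I)^k = A^k\otimes I$ is in hand. The only point genuinely requiring care is the interchange of the infinite summation with the Kronecker product, and this is harmless because tensoring with a fixed matrix is a bounded (indeed entrywise) linear operation and the exponential series converges absolutely for every $t$. One small bookkeeping remark is that, although part (a) is stated for $A = A_G$, the argument uses nothing about $A$ being an adjacency matrix, so invoking it with $I\otimes A$ in place of $A$ in part (c) is legitimate.
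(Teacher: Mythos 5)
Your proposal is correct and follows essentially the same route as the paper: substitute $(A\otimes I)^k = A^k\otimes I$ (from the mixed-product rule) into the defining series and factor the identity out of each term using properties (2), (4) and (5). Your extra care about the limit interchange and your derivation of (c) from (a) and (b) are fine refinements of what the paper dispatches with ``similarly.''
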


\begin{proof}
By (2), (4) and (5) above, we have
\begin{eqnarray*}
  H_{A\otimes I}(t)&=&\sum_{k\geq 0}\dfrac{(-\mathrm{i})^{k}(A\otimes I)^{k}t^{k}}{k!}\\
 &=&\sum_{k\geq 0}\dfrac{(-\mathrm{i})^{k}\left(A^k\otimes I^k\right)t^{k}}{k!}\\
&=&\sum_{k\geq 0}\dfrac{\left((-\mathrm{i})^{k}A^{k}t^{k}\right)\otimes I}{k!}\\
&=&H_{A}(t)\otimes I.
\end{eqnarray*}
This proves (a). Similarly, one can prove (b) and (c).  \qed
\end{proof}

\begin{lemma}
\emph{(Cvetkovi\'{c} et al. \cite[Theorem 2.5.3]{SG1})}
\label{7}
Let $G_1,\ldots,G_d$ be graphs and let $\emptyset\neq\mathcal{A} \subseteq \mathbb{Z}_{2}^{d}\setminus \{\mathbf{0}\}$. Then the adjacency matrix of $\NEPS(G_1, \ldots, G_d; \mathcal{A})$ is given by
$$A =\sum_{\mathbf{a} \in \mathcal{A}} A_{G_1}^{a_1}\otimes\cdots\otimes A_{G_d}^{a_d}.$$
\end{lemma}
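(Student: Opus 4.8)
The plan is to verify the claimed identity entry by entry, the only structural input being the standard entrywise description of a Kronecker product. First I would record that for any matrices $M_1, \ldots, M_d$, if rows and columns of $M_1\otimes\cdots\otimes M_d$ are indexed by the elements of $V(G_1)\times\cdots\times V(G_d)$ in the lexicographic order consistent with the Kronecker product, then
$$
(M_1\otimes\cdots\otimes M_d)_{(u_1,\ldots,u_d),(v_1,\ldots,v_d)} = \prod_{i=1}^{d}(M_i)_{u_i,v_i}.
$$
The two-factor case is immediate from the definition of the Kronecker product given above, and the general case follows by induction on $d$ using associativity (property (1)). This ordering is exactly the natural vertex ordering of $\NEPS(G_1,\ldots,G_d;\mathcal{A})$, so no reindexing is needed.

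Next I would apply this with $M_i = A_{G_i}^{a_i}$, adopting the convention $A_{G_i}^{0}=I$. For a fixed $\mathbf{a}=(a_1,\ldots,a_d)\in\mathcal{A}$ and a fixed ordered pair $\mathbf{u}=(u_1,\ldots,u_d)$, $\mathbf{v}=(v_1,\ldots,v_d)$, each factor $(A_{G_i}^{a_i})_{u_i,v_i}$ equals $\delta_{u_i,v_i}$ when $a_i=0$ and equals $(A_{G_i})_{u_i,v_i}$ when $a_i=1$. Hence the product above is $1$ precisely when $u_i=v_i$ for every $i$ with $a_i=0$ and $u_i$ is adjacent to $v_i$ in $G_i$ for every $i$ with $a_i=1$, and is $0$ otherwise. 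In other words, the $(\mathbf{u},\mathbf{v})$-entry of $A_{G_1}^{a_1}\otimes\cdots\otimes A_{G_d}^{a_d}$ is $1$ exactly when $\mathbf{a}$ witnesses the adjacency of $\mathbf{u}$ and $\mathbf{v}$ in the sense of Definition \ref{DefNEPS1}. Summing over $\mathbf{a}\in\mathcal{A}$ then shows that the $(\mathbf{u},\mathbf{v})$-entry of $\sum_{\mathbf{a}\in\mathcal{A}}A_{G_1}^{a_1}\otimes\cdots\otimes A_{G_d}^{a_d}$ counts the number of $\mathbf{a}\in\mathcal{A}$ witnessing this adjacency.

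The one point that requires care, and the only place where the hypotheses are used, is to check that this count never exceeds $1$, so that the right-hand side really is a $0$–$1$ matrix and hence an adjacency matrix. Here I would use that the graphs $G_i$ are simple (loopless), so $(A_{G_i})_{w,w}=0$ for every vertex $w$: a factor with $a_i=1$ forces $u_i\neq v_i$, while a factor with $a_i=0$ forces $u_i=v_i$. Consequently, if any $\mathbf{a}$ yields a nonzero product then it is uniquely determined by the pair $(\mathbf{u},\mathbf{v})$, namely $a_i=1$ exactly at the coordinates where $u_i\neq v_i$. Thus for each ordered pair at most one summand contributes; the count is $0$ or $1$, and it equals $1$ if and only if $\mathbf{u}$ and $\mathbf{v}$ are adjacent in the NEPS. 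Taking $\mathbf{u}=\mathbf{v}$ and using $\mathbf{0}\notin\mathcal{A}$ shows every diagonal entry is $0$, consistent with the NEPS being loopless. This matches the given matrix $A$ entrywise with the adjacency matrix of $\NEPS(G_1,\ldots,G_d;\mathcal{A})$ and completes the argument. I do not anticipate any substantial obstacle: the whole content is the entrywise Kronecker identity together with the uniqueness-of-witness observation just described.
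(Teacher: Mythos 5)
Your argument is correct and complete: the entrywise Kronecker identity, the witness characterization, and the uniqueness-of-witness observation (which guarantees the sum is a $0$--$1$ matrix) together establish the lemma. The paper itself gives no proof of this statement --- it is quoted from Cvetkovi\'{c} et al.\ \cite[Theorem 2.5.3]{SG1} --- and your proof is exactly the standard one for that result, so there is nothing to compare beyond noting the agreement.
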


\begin{lemma}
\emph{(Cvetkovi\'{c} et al. \cite[Theorem 2.5.4]{SG1})}
\label{1}
Let $\emptyset\neq\mathcal{A} \subseteq \mathbb{Z}_{2}^{d}\setminus \{\mathbf{0}\}$. Let $G_i$ be a graph on $k_i \ge 2$ vertices, $i=1, \ldots, d$. Suppose that $\lambda_{i,1},  \ldots , \lambda_{i,s_i}$ are distinct eigenvalues of $A_{G_i}$ with respective multiplicities  $l_{i,1},\ldots,l_{i,s_i}$, where $l_{i,1}+\cdots+l_{i,s_i}=k_i$. Let $\left\{\mathbf{x}_{i,1}^{(t)},\ldots,\mathbf{x}_{i,l_{i,t}}^{(t)}\right\}$ be linearly independent eigenvectors of the eigenspace associated with $\lambda_{i,t}$, $t=1,\ldots,s_i$. Then the eigenvalues of $\NEPS(G_1, \ldots, G_d; \mathcal{A})$ consists of all possible values of $\Lambda_{j_1, \ldots,  j_d}$, where
$$
\Lambda_{j_1, \ldots , j_d}=\sum\limits_{\mathbf{a} \in \mathcal{A}}\lambda_{1,j_1}^{a_1}\cdots\lambda_{d,j_d}^{a_d},~j_h=1, \ldots , s_h,~h=1, \ldots, d,
$$
and the corresponding eigenvectors are, respectively,
$$
\mathbf{x}_{1,q_1}^{(j_1)}\otimes\cdots\otimes \mathbf{x}_{d,q_{l_d}}^{(j_d)},~q_h=1, \ldots , l_{h,j_h},~h=1, \ldots , d.
$$
\end{lemma}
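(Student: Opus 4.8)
The plan is to combine the explicit adjacency-matrix formula of Lemma~\ref{7} with the multiplicativity of the Kronecker product under matrix multiplication (property~(4) above), and then close the argument with a dimension count. First I would invoke Lemma~\ref{7} to write the adjacency matrix of $\NEPS(G_1,\ldots,G_d;\mathcal{A})$ as $A=\sum_{\mathbf{a}\in\mathcal{A}}A_{G_1}^{a_1}\otimes\cdots\otimes A_{G_d}^{a_d}$, where $A_{G_i}^{0}$ is understood as the identity matrix $I_{k_i}$.

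Next, fix index tuples $(j_1,\ldots,j_d)$ and $(q_1,\ldots,q_d)$ with $1\le j_h\le s_h$ and $1\le q_h\le l_{h,j_h}$, and set $\mathbf{v}=\mathbf{x}_{1,q_1}^{(j_1)}\otimes\cdots\otimes\mathbf{x}_{d,q_d}^{(j_d)}$. The key computation is to apply a single summand of $A$ to $\mathbf{v}$. Using property~(4) repeatedly,
$$
\left(A_{G_1}^{a_1}\otimes\cdots\otimes A_{G_d}^{a_d}\right)\mathbf{v}
=\left(A_{G_1}^{a_1}\mathbf{x}_{1,q_1}^{(j_1)}\right)\otimes\cdots\otimes\left(A_{G_d}^{a_d}\mathbf{x}_{d,q_d}^{(j_d)}\right).
$$
The crucial observation is that $A_{G_i}^{a_i}\mathbf{x}_{i,q_i}^{(j_i)}=\lambda_{i,j_i}^{a_i}\mathbf{x}_{i,q_i}^{(j_i)}$ holds for both values $a_i\in\{0,1\}$: for $a_i=1$ this is the defining eigenvector relation, while for $a_i=0$ both sides equal $\mathbf{x}_{i,q_i}^{(j_i)}$ since $A_{G_i}^{0}=I_{k_i}$ and $\lambda_{i,j_i}^{0}=1$. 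Hence each summand scales $\mathbf{v}$ by the scalar $\lambda_{1,j_1}^{a_1}\cdots\lambda_{d,j_d}^{a_d}$, and summing over $\mathbf{a}\in\mathcal{A}$ gives $A\mathbf{v}=\Lambda_{j_1,\ldots,j_d}\,\mathbf{v}$. Thus every such $\mathbf{v}$ is an eigenvector of $A$ with eigenvalue $\Lambda_{j_1,\ldots,j_d}$.

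Finally I would argue that these vectors exhaust the spectrum. As $(j_1,\ldots,j_d)$ and $(q_1,\ldots,q_d)$ range over all admissible values, the number of vectors $\mathbf{v}$ equals $\prod_{i=1}^{d}\sum_{t=1}^{s_i}l_{i,t}=\prod_{i=1}^{d}k_i$, which is exactly the order of $\NEPS(G_1,\ldots,G_d;\mathcal{A})$. Since for each $i$ the family $\{\mathbf{x}_{i,q}^{(t)}\}$ is a basis of the eigenspace decomposition of $A_{G_i}$ and hence of the whole space, their tensor products form a basis of the full tensor-product space (a standard fact, obtained by iterating the observation that the tensor product of two linearly independent families is linearly independent). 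Therefore the $\mathbf{v}$'s constitute a basis consisting of eigenvectors, so the complete list of eigenvalues of $A$ is precisely the multiset of values $\Lambda_{j_1,\ldots,j_d}$, a given number occurring with multiplicity equal to the number of index tuples producing it. The only point requiring care is the bookkeeping in this completeness step---verifying the count $\prod_i k_i$ and the linear independence of the tensor products---rather than any genuine analytic difficulty, so this is where most of the writing would go, though it remains routine linear algebra.
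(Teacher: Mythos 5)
Your proposal is correct. Note that the paper does not prove this lemma at all---it is imported verbatim as Theorem 2.5.4 of Cvetkovi\'{c}, Rowlinson and Simi\'{c}---so there is no internal proof to compare against; your argument is the standard one for that textbook result: each tensor product of eigenvectors is an eigenvector of $\sum_{\mathbf{a}\in\mathcal{A}}A_{G_1}^{a_1}\otimes\cdots\otimes A_{G_d}^{a_d}$ with eigenvalue $\Lambda_{j_1,\ldots,j_d}$ by property (4) of the Kronecker product, and the completeness step is secured because for each $i$ the $k_i$ chosen eigenvectors of the (symmetric, hence diagonalizable) matrix $A_{G_i}$ form a basis of $\mathbb{C}^{k_i}$, whence their tensor products form a basis of the $\prod_i k_i$-dimensional space.
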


Recall that the tensor product of graphs $G$ and $H$ is defined as $G \otimes H=\NEPS(G,H; \{(1,1)\})$. So by Lemma \ref{7} the adjacency matrix of $G \otimes H$ is $A_G\otimes A_H$.

\begin{lemma}\emph{(Godsil \cite[Lemma 16.1]{CGodsil}; Pal et al. \cite[Proposition 2.2]{HPal})}\label{alg}
Let $G$ and $H$ be graphs. Let
$$
A_G=\sum_{r}\lambda_{r}E_r
$$
and
$$
A_H=\sum_{s}\mu_{s}F_s
$$
be the spectral decompositions of $A_G$ and $A_H$, respectively. Then
$$
H_{A_{G\otimes H}}(t)=\sum_{r}E_r \otimes H_{A_H}(\lambda_{r}t) = \sum_{s}H_{A_G}(\mu_{s}t)\otimes F_s.
$$
\end{lemma}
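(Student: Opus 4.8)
The plan is to reduce all three expressions to a single common double sum indexed by the pairs $(r,s)$ of eigenspaces of $G$ and $H$. The starting observation is that, by Lemma \ref{7} applied to $\mathcal{A}=\{(1,1)\}$, we have $A_{G\otimes H}=A_G\otimes A_H$, so everything is governed by the Kronecker product of the two spectral decompositions $A_G=\sum_r\lambda_r E_r$ and $A_H=\sum_s\mu_s F_s$.

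First I would compute $H_{A_{G\otimes H}}(t)$ directly from its power series. Using $A_{G\otimes H}^k=(A_G\otimes A_H)^k=A_G^k\otimes A_H^k$ (Kronecker property (4)) together with the orthogonality relations $E_rE_{r'}=0$ for $r\ne r'$ and $E_r^2=E_r$ recorded before \eqref{H_ADec2}, which give $A_G^k=\sum_r\lambda_r^k E_r$ and likewise $A_H^k=\sum_s\mu_s^k F_s$, one obtains $A_G^k\otimes A_H^k=\sum_{r,s}\lambda_r^k\mu_s^k\,(E_r\otimes F_s)$. Substituting this into $\sum_{k\ge0}(-\mathrm{i})^k t^k A_{G\otimes H}^k/k!$ and interchanging the (absolutely convergent) sums, the identity $(-\mathrm{i})^k t^k\lambda_r^k\mu_s^k=(-\mathrm{i}t\lambda_r\mu_s)^k$ collapses the $k$-sum into an exponential, yielding
\[
H_{A_{G\otimes H}}(t)=\sum_{r,s}\exp(-\mathrm{i}t\lambda_r\mu_s)\,(E_r\otimes F_s).
\]
Equivalently, one can note that the matrices $E_r\otimes F_s$ are orthogonal idempotents summing to the identity---again by property (4) and $\sum_rE_r=\sum_sF_s=I$---so this is just the spectral decomposition of $A_G\otimes A_H$ fed into \eqref{H_ADec2}.

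It then remains to check that each claimed right-hand side equals this common double sum. For the first, I would expand the inner transition matrix via \eqref{H_ADec2} as $H_{A_H}(\lambda_r t)=\sum_s\exp(-\mathrm{i}\lambda_r t\,\mu_s)F_s$; pulling $E_r\otimes(\cdot)$ through the sum by bilinearity of the Kronecker product (property (2)) gives $\sum_r E_r\otimes H_{A_H}(\lambda_r t)=\sum_{r,s}\exp(-\mathrm{i}t\lambda_r\mu_s)(E_r\otimes F_s)$, exactly the expression above. The second right-hand side follows by the symmetric computation, expanding $H_{A_G}(\mu_s t)=\sum_r\exp(-\mathrm{i}\mu_s t\,\lambda_r)E_r$ and summing against $F_s$.

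I expect the only genuinely delicate point to be the bookkeeping that justifies swapping the order of summation and recognizing $E_r\otimes F_s$ as a legitimate projection system; both reduce to Kronecker property (4) and the idempotent/orthogonality relations recorded before \eqref{H_ADec2}, so no essential difficulty arises. Everything else is routine algebra with the exponential power series.
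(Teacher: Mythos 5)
Your proof is correct. The paper does not actually prove Lemma \ref{alg} --- it is quoted from Godsil \cite[Lemma 16.1]{CGodsil} and Pal--Bhattacharjya \cite[Proposition 2.2]{HPal} --- and your argument (reduce both sides to the double sum $\sum_{r,s}\exp(-\mathrm{i}t\lambda_r\mu_s)\,(E_r\otimes F_s)$ via $A_{G\otimes H}^k=A_G^k\otimes A_H^k$ and the orthogonal-idempotent relations) is precisely the standard derivation given in those sources, so there is nothing to add.
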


\begin{lemma}\emph{(Pal and Bhattacharjya \cite[Proposition 4.1]{HPal1})}
\label{8}
Let $G_1,\ldots, G_d$ be graphs and let $\emptyset\neq\mathcal{A}\subseteq \mathbb{Z}_2^{d}\setminus \{\mathbf{0}\}$. Denote by $H_{\mathcal{A}}(t)$ the transition matrix of $\NEPS(G_1,\ldots,G_d;\mathcal{A})$ and by $H_{\mathbf{a}}(t)$ the transition matrix of $\NEPS(G_1,\ldots,G_d;\{\mathbf{a}\})$ for $\mathbf{a} \in \mathbb{Z}_2^d\setminus \{\mathbf{0}\}$. Then
$$
H_\mathcal{A}(t)=\prod_{\mathbf{a} \in \mathcal{A}}H_{\mathbf{a}}(t).
$$
Equivalently, for any partition $\{\mathcal{A}_1, \ldots, \mathcal{A}_m\}$ of $\mathcal{A}$, we have
$$
H_{\mathcal{A}}(t)=\prod_{i=1}^m H_{\mathcal{A}_{i}}(t).
$$
\end{lemma}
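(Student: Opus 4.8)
The plan is to reduce the statement to the elementary fact that the matrix exponential is multiplicative on commuting summands. By Lemma \ref{7}, the adjacency matrix of $\NEPS(G_1,\ldots,G_d;\mathcal{A})$ is $A=\sum_{\mathbf{a}\in\mathcal{A}}B_{\mathbf{a}}$, where I write $B_{\mathbf{a}}=A_{G_1}^{a_1}\otimes\cdots\otimes A_{G_d}^{a_d}$ for $\mathbf{a}=(a_1,\ldots,a_d)$; likewise $\NEPS(G_1,\ldots,G_d;\{\mathbf{a}\})$ has adjacency matrix $B_{\mathbf{a}}$, so that $H_{\mathbf{a}}(t)=\exp(-\mathrm{i}tB_{\mathbf{a}})$ and $H_{\mathcal{A}}(t)=\exp(-\mathrm{i}t\sum_{\mathbf{a}\in\mathcal{A}}B_{\mathbf{a}})$.

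The key step is to show that the summands $B_{\mathbf{a}}$ commute pairwise. For $\mathbf{a},\mathbf{b}\in\mathcal{A}$, applying property (4) of the Kronecker product coordinate by coordinate gives
$$
B_{\mathbf{a}}B_{\mathbf{b}}=\bigl(A_{G_1}^{a_1}A_{G_1}^{b_1}\bigr)\otimes\cdots\otimes\bigl(A_{G_d}^{a_d}A_{G_d}^{b_d}\bigr)=A_{G_1}^{a_1+b_1}\otimes\cdots\otimes A_{G_d}^{a_d+b_d}.
$$
Since powers of a single matrix commute, $A_{G_i}^{a_i+b_i}=A_{G_i}^{b_i+a_i}$ for each $i$, and the same computation with $\mathbf{a},\mathbf{b}$ interchanged yields $B_{\mathbf{b}}B_{\mathbf{a}}$ equal to the same Kronecker product. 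Hence $B_{\mathbf{a}}B_{\mathbf{b}}=B_{\mathbf{b}}B_{\mathbf{a}}$, so the matrices $\{-\mathrm{i}tB_{\mathbf{a}}:\mathbf{a}\in\mathcal{A}\}$ commute pairwise.

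With commutativity established, I would invoke the standard identity $\exp(X+Y)=\exp(X)\exp(Y)$ for commuting matrices (proved by multiplying the defining power series and collecting terms, exactly in the spirit of the manipulation carried out in Lemma \ref{3.1}) to conclude
$$
H_{\mathcal{A}}(t)=\exp\Bigl(-\mathrm{i}t\sum_{\mathbf{a}\in\mathcal{A}}B_{\mathbf{a}}\Bigr)=\prod_{\mathbf{a}\in\mathcal{A}}\exp(-\mathrm{i}tB_{\mathbf{a}})=\prod_{\mathbf{a}\in\mathcal{A}}H_{\mathbf{a}}(t).
$$
For the partition form, note that for each block $\mathcal{A}_i$ the adjacency matrix of $\NEPS(G_1,\ldots,G_d;\mathcal{A}_i)$ is $\sum_{\mathbf{a}\in\mathcal{A}_i}B_{\mathbf{a}}$, and these block sums again commute pairwise because each individual $B_{\mathbf{a}}$ does; grouping the single-element product according to the partition (equivalently, applying the commuting-exponential identity to the block sums whose total is $A$) yields $H_{\mathcal{A}}(t)=\prod_{i=1}^m H_{\mathcal{A}_i}(t)$.

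There is really no serious obstacle here: the only point requiring care is the pairwise commutativity of the $B_{\mathbf{a}}$, and once that is in hand everything reduces to the multiplicativity of the exponential on commuting matrices. I would simply make sure the commuting-exponential identity is stated or referenced cleanly, since it is the one fact that fails for non-commuting matrices and is therefore where the whole argument genuinely lives.
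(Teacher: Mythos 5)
Your proof is correct. Note that the paper itself gives no proof of this lemma---it is quoted from Pal and Bhattacharjya \cite[Proposition 4.1]{HPal1}---so there is nothing internal to compare against; your argument (pairwise commutativity of the Kronecker-product summands $B_{\mathbf{a}}$ via the mixed-product property, followed by multiplicativity of the matrix exponential on commuting matrices) is exactly the standard proof of that cited result, and the reduction of the partition form to the single-element form by grouping commuting factors is also sound.
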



\section{Proofs of the main results}
\label{Sec:main}


\subsection{A few lemmas}
\label{subsec:lem}

We need the following six lemmas in the proofs of Theorems \ref{Theorem3}, \ref{Theorem4} and \ref{10-3}.

\begin{lemma}
\label{Lemma2.5}
Let $\mathbf{a}=(a_1,\ldots,a_r)\in \mathbb{Z}_2^r\setminus \{\mathbf{0}\}$.
Denote by $A_{\mathbf{a}}$ the adjacency matrix of $\NEPS(r\odot K_2;\{\mathbf{a}\})$.
Then
\begin{itemize}
  \item[\rm (a)] $\pm1$ are all distinct eigenvalues of $\NEPS(r\odot K_2;\{\mathbf{a}\})$;
  \item[\rm (b)] the spectral decomposition of $A_{\mathbf{a}}$ is given by
\begin{equation}
\label{Lem2.8-1}
A_{\mathbf{a}}=(-1)\cdot\left(\frac{1}{2}\left(I_{2^r} -\bigotimes_{j=1}^{r}\left(A_{K_2}\right)^{a_j}\right)\right)
+1\cdot\left(\frac{1}{2}\left(I_{2^r}+\bigotimes_{j=1}^{r}\left(A_{K_2}\right)^{a_j}\right)\right);
\end{equation}
\item[\rm (c)]
\begin{equation}
 H_{\mathbf{a}}\left(\pi\right) = -I_{2^r},
 \label{3-1Eq-1}
 \end{equation}
 \begin{equation}
 H_{\mathbf{a}}\left(\frac{\pi}{2}\right) = -\mathrm{i}\bigotimes_{j=1}^{r}(A_{K_2})^{a_j}.
 \label{3-1Eq-2}
 \end{equation}
\end{itemize}
\end{lemma}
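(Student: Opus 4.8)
The plan is to reduce all three parts to the single structural fact that $A_{\mathbf{a}}$ is an involution. By Lemma \ref{7} applied to $\mathcal{A} = \{\mathbf{a}\}$ with $G_1 = \cdots = G_r = K_2$, I would first record
$$
A_{\mathbf{a}} = \bigotimes_{j=1}^{r} (A_{K_2})^{a_j}.
$$
Since $A_{K_2}^2 = I_2$, each factor satisfies $\left((A_{K_2})^{a_j}\right)^2 = I_2$ regardless of whether $a_j$ is $0$ or $1$, so property (4) of the Kronecker product gives $A_{\mathbf{a}}^2 = \bigotimes_{j=1}^{r} I_2 = I_{2^r}$. This one identity drives everything that follows.

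For part (a), $A_{\mathbf{a}}^2 = I_{2^r}$ forces every eigenvalue of $A_{\mathbf{a}}$ into $\{+1,-1\}$, so it only remains to confirm that both values are actually attained. I would argue this in either of two ways. Since $\mathbf{a} \neq \mathbf{0}$, at least one $a_j = 1$, hence at least one tensor factor equals $A_{K_2}$ (which has trace $0$) and therefore $\trace(A_{\mathbf{a}}) = 0$; combined with $A_{\mathbf{a}}^2 = I_{2^r}$ this forces exactly $2^{r-1}$ eigenvalues equal to $+1$ and $2^{r-1}$ equal to $-1$. Alternatively, Lemma \ref{1} expresses the eigenvalues as products $\lambda_{1,j_1}^{a_1}\cdots\lambda_{r,j_r}^{a_r}$ with each $\lambda_{i,j_i} \in \{+1,-1\}$, and choosing the coordinate with $a_j = 1$ to contribute $-1$ (resp. $+1$) realizes the eigenvalue $-1$ (resp. $+1$). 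Either route shows both $\pm 1$ occur, proving (a).

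Part (b) then follows from the standard spectral-projection formula for an involution: I would simply verify that $E_{\pm} := \frac{1}{2}(I_{2^r} \pm A_{\mathbf{a}})$ are orthogonal idempotents with $E_+ + E_- = I_{2^r}$ and $(+1)E_+ + (-1)E_- = A_{\mathbf{a}}$. Indeed $E_{\pm}^2 = \frac{1}{4}(I_{2^r} \pm 2A_{\mathbf{a}} + A_{\mathbf{a}}^2) = E_{\pm}$ and $E_+ E_- = \frac{1}{4}(I_{2^r} - A_{\mathbf{a}}^2) = 0$, both using $A_{\mathbf{a}}^2 = I_{2^r}$; substituting $A_{\mathbf{a}} = \bigotimes_{j=1}^{r}(A_{K_2})^{a_j}$ into $A_{\mathbf{a}} = (+1)E_+ + (-1)E_-$ reproduces \eqref{Lem2.8-1}. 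For part (c), the cleanest route is to feed this decomposition into \eqref{H_ADec2}, giving $H_{\mathbf{a}}(t) = e^{-\mathrm{i}t}E_+ + e^{\mathrm{i}t}E_-$, equivalently $H_{\mathbf{a}}(t) = \cos(t)\,I_{2^r} - \mathrm{i}\sin(t)\,A_{\mathbf{a}}$ after collecting the $A_{\mathbf{a}}$ terms; the same closed form also drops out of the exponential series once even and odd powers are separated via $A_{\mathbf{a}}^2 = I_{2^r}$. Evaluating at $t = \pi$ kills the $\sin$ term and yields $-I_{2^r}$, while $t = \frac{\pi}{2}$ kills the $\cos$ term and yields $-\mathrm{i}A_{\mathbf{a}} = -\mathrm{i}\bigotimes_{j=1}^{r}(A_{K_2})^{a_j}$, which are exactly \eqref{3-1Eq-1} and \eqref{3-1Eq-2}. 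I do not expect a genuine obstacle: the whole lemma is an unwinding of the involution identity, and the only point demanding a moment's care is confirming in part (a) that $-1$ is genuinely attained (not merely that eigenvalues are bounded by $\pm 1$), which the trace or Lemma \ref{1} argument settles cleanly.
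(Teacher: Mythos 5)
Your proposal is correct and follows essentially the same route as the paper: both rest on $A_{\mathbf{a}}=\bigotimes_{j=1}^{r}(A_{K_2})^{a_j}$ from Lemma \ref{7} together with $A_{\mathbf{a}}^2=I_{2^r}$, verify that $\frac{1}{2}(I_{2^r}\pm A_{\mathbf{a}})$ are complementary orthogonal idempotents to get \eqref{Lem2.8-1}, and then evaluate $H_{\mathbf{a}}(t)=\exp(\mathrm{i}t)E_-+\exp(-\mathrm{i}t)E_+$ at $t=\pi$ and $t=\frac{\pi}{2}$ via \eqref{H_ADec2}. Your packaging of the computation as ``$A_{\mathbf{a}}$ is an involution'' and the explicit check that both eigenvalues $\pm1$ are attained (via the trace or via Lemma \ref{1}) are only minor presentational differences from the paper's argument.
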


\begin{proof}
(a) By Lemma \ref{7}, $A_{\mathbf{a}}=\bigotimes_{j=1}^{r}\left(A_{K_2}\right)^{a_j}$. Since $\pm1$ are eigenvalues of $A_{K_2}$ and $1$ is the eigenvalue of $I_2$, according to Lemma \ref{1}, we obtain (a) immediately.

(b) Note that
\begin{eqnarray}
 A_{\mathbf{a}} \cdot \left(\frac{1}{2}\left(I_{2^r}-\bigotimes_{j=1}^{r}\left(A_{K_2}\right)^{a_j}\right)\right)
 &=&\left(\bigotimes_{j=1}^{r}\left(A_{K_2}\right)^{a_j}\right)
  \left(\frac{1}{2}\left(I_{2^r}-\bigotimes_{j=1}^{r}\left(A_{K_2}\right)^{a_j}\right)\right)\nonumber\\
&=&\,\frac{1}{2}\left(\bigotimes_{j=1}^{r}\left(A_{K_2}\right)^{a_j}-\bigotimes_{j=1}^{r}\left(A_{K_2}\right)^{2a_j}\right)\nonumber\\
&=&\,(-1)\cdot\left(\frac{1}{2}\left(I_{2^r} -\bigotimes_{j=1}^{r}\left(A_{K_2}\right)^{a_j}\right)\right),\label{Lem2.8-2}
\end{eqnarray}
where in the last step we used the fact that $\left(A_{K_2}\right)^{2}=I_2$. Similarly,
 \begin{equation}
 \label{Lem2.8-3}
 A_{\mathbf{a}}\cdot\left(\frac{1}{2}\left(I_{2^r}+
  \bigotimes_{j=1}^{r}\left(A_{K_2}\right)^{a_j}\right)\right) = 1\cdot\left(\frac{1}{2}\left(I_{2^r} +\bigotimes_{j=1}^{r}\left(A_{K_2}\right)^{a_j}\right)\right).
\end{equation}
On the other hand, one can easily verify that
\begin{equation}
\label{Lem2.8-4}
\left(\frac{1}{2}\left(I_{2^r}-\bigotimes_{j=1}^{r}\left(A_{K_2}\right)^{a_j}\right)\right)^2 =\frac{1}{2}\left(I_{2^r}-\bigotimes_{j=1}^{r}\left(A_{K_2}\right)^{a_j}\right),
  \end{equation}
  \begin{equation}
  \label{Lem2.8-5}
  \left(\frac{1}{2}\left(I_{2^r}+
  \bigotimes_{j=1}^{r}\left(A_{K_2}\right)^{a_j}\right)\right)^2=\frac{1}{2}\left(I_{2^r}+
  \bigotimes_{j=1}^{r}\left(A_{K_2}\right)^{a_j}\right),
  \end{equation}
  \begin{equation}\label{Lem2.8-6}
  \left(\frac{1}{2}\left(I_{2^r}-
  \bigotimes_{j=1}^{r}\left(A_{K_2}\right)^{a_j}\right)\right)
  \left(\frac{1}{2}\left(I_{2^r}+
  \bigotimes_{j=1}^{r}\left(A_{K_2}\right)^{a_j}\right)\right)=0,
  \end{equation}
and
\begin{equation}
\label{Lem2.8-7}
\frac{1}{2}\left(I_{2^r}-\bigotimes_{j=1}^{r}\left(A_{K_2}\right)^{a_j}\right)
+\frac{1}{2}\left(I_{2^r}+\bigotimes_{j=1}^{r}\left(A_{K_2}\right)^{a_j}\right)=I_{2^r}.
\end{equation}
We then obtain (b) using (\ref{Lem2.8-2})-(\ref{Lem2.8-7}) and the definition of the spectral decomposition.

(c) By (\ref{H_ADec2}) and (\ref{Lem2.8-1}), we have
$$
H_{\mathbf{a}}(t) = \exp(\mathrm{i}t)\left(\frac{1}{2}\left(I_{2^r}-\bigotimes_{j=1}^{r}\left(A_{K_2}\right)^{a_j}\right)\right)
+\exp(-\mathrm{i}t)\left(\frac{1}{2}\left(I_{2^r}+\bigotimes_{j=1}^{r}\left(A_{K_2}\right)^{a_j}\right)\right).
$$
Evaluating at $t=\pi$ and $t = \frac{\pi}{2}$, we obtain \eqref{3-1Eq-1} and \eqref{3-1Eq-2}, respectively.
\qed\end{proof}

\begin{lemma}
\label{3}
Let $\mathbf{a}=(a_1,\ldots,a_d)\in \mathbb{Z}_2^d\setminus \{\mathbf{0}\}$. Denote by $H_{\mathbf{a}}(t)$ the transition matrix of $\NEPS(K_{n_1},\ldots,K_{n_d};\{\mathbf{a}\})$ and set $h=\gcd(n_1,\ldots,n_d)$. Then the following hold:
\begin{itemize}
  \item[\rm (a)] $\NEPS(K_{n_1},\ldots,K_{n_d};\{\mathbf{a}\})$ does not have PST \emph{(\cite[Corollary 2.14]{SZ})};
  \item[\rm (b)] $\NEPS(K_{n_1},\ldots,K_{n_d};\{\mathbf{a}\})$ is periodic with period $\frac{2t \pi}{h}$ for every non-zero integer $t$, and moreover
      $$
      H_{\mathbf{a}}\left(\frac{2t \pi}{h}\right) =\exp\left((-1)^{w(\mathbf{a})-1}\frac{2t \pi}{h}\mathrm{i}\right)I_{n_1\cdots n_d}.
      $$
\end{itemize}
\end{lemma}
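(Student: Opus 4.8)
The plan is to prove part (b) directly from the spectral decomposition, since part (a) is quoted from \cite{SZ}. First I would record, via Lemma \ref{7}, that the adjacency matrix of $\NEPS(K_{n_1},\ldots,K_{n_d};\{\mathbf{a}\})$ is the Kronecker product $A_{\mathbf{a}} = A_{K_{n_1}}^{a_1}\otimes\cdots\otimes A_{K_{n_d}}^{a_d}$, whose $i$th factor is $A_{K_{n_i}}$ when $a_i=1$ and $I_{n_i}$ when $a_i=0$. Applying Lemma \ref{1}, together with the fact (Example \ref{Exampe:1}) that each $A_{K_{n_i}}$ has eigenvalues $n_i-1$ and $-1$, every eigenvalue $\Lambda$ of $A_{\mathbf{a}}$ is a product
$$
\Lambda = \prod_{i\in S}\lambda_i,\qquad \lambda_i\in\{-1,\,n_i-1\},
$$
where $S=\{i:a_i=1\}$ is the support of $\mathbf{a}$, so that $|S|=w(\mathbf{a})$ (the coordinates with $a_i=0$ contribute a factor $I_{n_i}$, i.e.\ an eigenvalue $1$, and hence drop out of the product).

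The crux of the argument is a congruence modulo $h=\gcd(n_1,\ldots,n_d)$. Since $h\mid n_i$, we have $n_i-1\equiv -1\pmod h$ for every $i$, so each factor satisfies $\lambda_i\equiv -1\pmod h$ regardless of which of its two eigenvalues is chosen. Multiplying over $i\in S$ gives $\Lambda\equiv(-1)^{|S|}=(-1)^{w(\mathbf{a})}\pmod h$ for \emph{every} eigenvalue $\Lambda$. In particular, taking all $\lambda_i=-1$ shows that $\Lambda_0:=(-1)^{w(\mathbf{a})}$ is itself an eigenvalue, and every eigenvalue $\Lambda$ satisfies $\Lambda-\Lambda_0\in h\mathbb{Z}$.

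I would then substitute $\tau=\tfrac{2t\pi}{h}$ into the spectral form \eqref{H_ADec2}, namely $H_{\mathbf{a}}(\tau)=\sum_{\Lambda}\exp(-\mathrm{i}\tau\Lambda)E_{\Lambda}$. Writing any eigenvalue as $\Lambda=\Lambda_0+hm$ with $m\in\mathbb{Z}$ gives $\tau\Lambda=\tau\Lambda_0+2t\pi m$, hence $\exp(-\mathrm{i}\tau\Lambda)=\exp(-\mathrm{i}\tau\Lambda_0)$; that is, all the phase factors collapse to the single scalar
$$
\exp(-\mathrm{i}\tau\Lambda_0)=\exp\!\left((-1)^{w(\mathbf{a})-1}\tfrac{2t\pi}{h}\,\mathrm{i}\right),
$$
using $(-1)^{w(\mathbf{a})+1}=(-1)^{w(\mathbf{a})-1}$. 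Since $\sum_{\Lambda}E_{\Lambda}=I_{n_1\cdots n_d}$, factoring out this common scalar yields exactly the claimed formula for $H_{\mathbf{a}}\!\left(\tfrac{2t\pi}{h}\right)$. As it is a scalar multiple of the identity with modulus-$1$ scalar, all diagonal entries have modulus $1$, so the graph is periodic with period $\tfrac{2t\pi}{h}$, completing (b); the vanishing of the off-diagonal entries is of course consistent with the absence of PST asserted in (a).

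I do not expect a serious obstacle here: once the mod-$h$ congruence is isolated, everything is routine. The only point requiring care is the bookkeeping of the coordinates with $a_i=0$, which must be seen to drop out of the eigenvalue product so that the exponent is $w(\mathbf{a})$ rather than $d$; after that, the collapse of all spectral phases to a single value is immediate.
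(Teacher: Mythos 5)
Your proof is correct, and it takes a genuinely different route from the paper's. The paper proves part (b) by induction on $d$: it peels off the last factor $K_{n_{l+1}}$, splits into cases according to whether $a_{l+1}=1$ or $a_{l+1}=0$ (and whether the truncated vector is $\mathbf{0}$), and uses Lemma \ref{alg} together with the idempotents $E_1,E_2$ of $A_{K_{n_{l+1}}}$ to show the two spectral phases coincide at $t=\tfrac{2t\pi}{h_{l+1}}$, recombining $E_1+E_2=I$. Your argument replaces all of that with a single observation: by Lemma \ref{1} every eigenvalue of $\NEPS(K_{n_1},\ldots,K_{n_d};\{\mathbf{a}\})$ is a product $\prod_{i\in S}\lambda_i$ over the support $S$ of $\mathbf{a}$ with $\lambda_i\in\{-1,n_i-1\}$, and since $n_i-1\equiv-1\pmod h$ for each $i$, every eigenvalue is congruent to $(-1)^{w(\mathbf{a})}$ modulo $h$; hence all phases $\exp(-\mathrm{i}\tau\Lambda)$ at $\tau=\tfrac{2t\pi}{h}$ collapse to a single unimodular scalar and $\sum_\Lambda E_\Lambda=I$ does the rest. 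The arithmetic checks out, including the sign bookkeeping $(-1)^{w(\mathbf{a})+1}=(-1)^{w(\mathbf{a})-1}$ and the fact that the coordinates with $a_i=0$ contribute only the eigenvalue $1$ of $I_{n_i}$ and drop out of the product. What your approach buys is a shorter, more conceptual proof that isolates the real reason the lemma holds (a congruence on the spectrum modulo $h$) and avoids the case analysis entirely; what the paper's inductive approach buys is that it stays entirely within the tensor-product machinery (Lemmas \ref{3.1} and \ref{alg}) that is reused verbatim in the later lemmas, so the two are stylistically consistent even if the induction is longer. Like the paper, you correctly treat part (a) as quoted from the reference rather than reproved.
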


\begin{proof}
We prove (b) by induction on $d$. If $d=1$, then $w(\mathbf{a})=1$ as $\mathbf{a} \neq \textbf{0}$. By Example  \ref{Exampe:1},
 $$H_{\mathbf{a}}\left(\frac{2t \pi}{n_1}\right) =\exp\left(\frac{2t \pi}{n_1}\mathrm{i}\right)I_{n_1} =\exp\left((-1)^{(1-1)}\frac{2t \pi}{n_1}\mathrm{i}\right)I_{n_1}.$$
So the result is true when $d=1$.

Assume that the result in (b) is true for some $d=l$. Consider $d=l+1$ and $\mathbf{a}=(a_1,\ldots,a_l,a_{l+1})$. Let ${\mathbf{a}}^{*}=(a_1,\ldots,a_l)$. Set $h_l=\gcd(n_1,\ldots, n_l)$.
\medskip

\emph{Case 1.} ${\mathbf{a}}^{*}\not=\mathbf{0}$.

In this case, by the hypothesis we have
$$
H_{\mathbf{a}^{*}}\left(\frac{2t \pi}{h_l}\right) = \exp\left((-1)^{w(\mathbf{a}^{\ast})-1}\frac{2t \pi}{h_l}\mathrm{i}\right)I_{n_1\cdots n_l}.
$$

\medskip
\emph{Case 1.1.} $a_{l+1}=1$.

In this case we have $w(\mathbf{a})=w(\mathbf{a}^{*})+1$. According to Lemma \ref{7}, $\NEPS(K_{n_1},\ldots,K_{n_{l+1}};\{\mathbf{a}\})$ is the tensor product $\NEPS(K_{n_1},\ldots,K_{n_{l}};\{\mathbf{a}^{*}\})\otimes K_{n_{l+1}}$. By Example  \ref{Exampe:1}, the spectral decomposition of $A_{K_{n_{l+1}}}$ is given by $A_{K_{n_{l+1}}}=-E_{1}+(n_{l+1}-1)E_{2}$, where $E_{1}$ and $E_{2}$ are as shown in (\ref{H_ADec3}) but with $n$ replaced by $n_{l+1}$. Note that $h_{l+1}=\gcd(n_1,\ldots, n_l,n_{l+1})=\gcd(h_l,n_{l+1})$. Set $h_l= k h_{l+1}$, where $k \ne 0$ is an integer. Lemma \ref{alg} implies that
  \begin{eqnarray*}
  H_{\mathbf{a}} \left(\frac{2t \pi}{h_{l+1}}\right) & = &\,H_{\mathbf{a}^{*}}\left(-\frac{2t \pi}{h_{l+1}}\right)\otimes E_1 +H_{\mathbf{a}^{*}}\left((n_{l+1}-1)\frac{2t \pi}{h_{l+1}}\right)\otimes E_2\\
  &=&\,H_{\mathbf{a}^{*}}\left(-\frac{2t \pi}{h_{l+1}}\right)\otimes E_1 +H_{\mathbf{a}^{*}}\left(-\frac{2t \pi}{h_{l+1}}\right)\otimes E_2\\
  &=&\,H_{\mathbf{a}^{*}}\left(-\frac{2t \pi}{h_{l+1}}\right)\otimes (E_1+E_2)\\
  &=&\, H_{\mathbf{a}^{*}}\left(-k\cdot\frac{2t \pi}{h_{l}}\right)\otimes I_{n_{l+1}}\\
  &=& \left(H_{\mathbf{a}^{*}}\left(-\frac{2t \pi}{h_{l}}\right)\right)^{k} \otimes I_{n_{l+1}}\\
  &=& \left(\exp\left((-1)^{w(\mathbf{a}^{\ast})-1+1}\frac{2t \pi}{h_l}\mathrm{i}\right)I_{n_1\cdots n_l}\right)^{k} \otimes I_{n_{l+1}}\\
&=&\exp\left((-1)^{(w(\mathbf{a})-1)}\frac{2t \pi}{h_{l}}\mathrm{i}\right)^{k} I_{n_1 \cdots n_{l+1}}\\
&=&\exp\left((-1)^{(w(\mathbf{a})-1)}\frac{2t \pi}{h_{l+1}}\mathrm{i}\right) I_{n_1 \cdots n_{l+1}}.
\end{eqnarray*}

\emph{Case 1.2.}  $a_{l+1}=0$.

In this case we have $w(\mathbf{a})=w(\mathbf{a}^{*})$. Let $A_{\mathbf{a}^{*}}$ be the adjacency matrix of  $\NEPS(K_{n_1},\ldots,K_{n_{l}};\{\mathbf{a}^{*}\})$. By Lemma \ref{7}, the adjacency matrix of  $\NEPS(K_{n_1},\ldots,K_{n_{l+1}};\{\mathbf{a}\})$ is $A_{\mathbf{a}^{*}}\otimes I_{n_{l+1}}$. According to Lemma \ref{3.1}(a) and the same technique as in Case 1.1, we obtain
 \begin{eqnarray*}
  H_\mathbf{a}\left(\frac{2t \pi}{h_{l+1}}\right)
  &=&\,H_{\mathbf{a}^{*}}\left(\frac{2t \pi}{h_{l+1}}\right)\otimes I_{n_{l+1}}\\
  &=&\exp\left((-1)^{(w(\mathbf{a}^{*})-1)}\frac{2t \pi}{h_{l+1}}\mathrm{i}\right) I_{n_1 \cdots n_{l+1}}\\
  &=&\exp\left((-1)^{(w(\mathbf{a})-1)}\frac{2t \pi}{h_{l+1}}\mathrm{i}\right) I_{n_1 \cdots n_{l+1}}.
\end{eqnarray*}

\emph{Case 2.} $\mathbf{a}^{*}=\mathbf{0}$.

In this case we have $\mathbf{a}=(0,\ldots,0,1)$ and so $w(\mathbf{a})=1$. By Lemma \ref{7}, the adjacency matrix of $\NEPS(K_{n_1},\ldots,K_{n_{l+1}};\{\mathbf{a}\})$ is $I_{n_1}\otimes\cdots\otimes I_{n_l}\otimes A_{K_{n_{l+1}}}$. Note that $h_{l+1}=\gcd(n_1,\ldots, n_l,n_{l+1})=\gcd(h_l,n_{l+1})$. Set $n_{l+1}= k h_{l+1}$, where $k \ne 0$ is an integer. According to Lemma \ref{3.1}(b) and Example  \ref{Exampe:1}, we have

 \begin{eqnarray*}
  H_\mathbf{a}\left(\frac{2t \pi}{h_{l+1}}\right) & = & I_{n_1}\otimes\cdots\otimes I_{n_l}\otimes H_{A_{K_{n_{l+1}}}}\left(\frac{2t \pi}{h_{l+1}}\right)\\
 &=& I_{n_1}\otimes\cdots\otimes I_{n_l}\otimes H_{A_{K_{n_{l+1}}}}\left(k \cdot\frac{2t \pi}{n_{l+1}}\right)\\
 &=& I_{n_1}\otimes\cdots\otimes I_{n_l}\otimes \left(H_{A_{K_{n_{l+1}}}}\left(\frac{2t \pi}{n_{l+1}}\right)\right)^{k}\\
 &=& I_{n_1}\otimes\cdots\otimes I_{n_l}\otimes \left(\exp\left(\frac{2t \pi}{n_{l+1}}\mathrm{i}\right)I_{n_{l+1}}\right)^{k}\\
 &=& I_{n_1}\otimes\cdots\otimes I_{n_l}\otimes \left(\exp\left(\frac{2t \pi}{h_{l+1}}\mathrm{i}\right)I_{n_{l+1}}\right)\\
 &=& \exp\left((-1)^{(w(\mathbf{a})-1)}\frac{2t \pi}{h_{l+1}}\mathrm{i}\right) I_{n_1 \cdots n_{l+1}}.
\end{eqnarray*}
This completes the proof. \qed
\end{proof}

\begin{lemma}\label{4}
Let $\mathbf{a}=(a_1,\ldots,a_d)\in \mathbb{Z}_2^d\setminus \{\mathbf{0}\}$ and $\mathbf{y}=(a_{d+1},\ldots,a_{d+r}) \in \mathbb{Z}_2^r\setminus \{\mathbf{0}\}$. Set  $h=\gcd(n_1,\ldots,n_d)$ and $\mathbf{b} = (a_1,\ldots,a_d,a_{d+1},\ldots,a_{d+r})$. Denote by $H_{\mathbf{b}}(t)$ the transition matrix of $G = \NEPS(K_{n_1},\ldots,K_{n_d},r\odot K_2;\{\mathbf{b}\})$. Then the following hold:
\begin{itemize}
  \item[\rm (a)] if $h$ is odd, then $G$ is periodic with period $2\pi$ and $H_\mathbf{b}(2\pi)=I_{2^rn_1\cdots n_d}$;
  \item[\rm (b)] if $h$ is even, then $G$ is periodic with period $\pi$ and $H_\mathbf{b}(\pi)=-I_{2^rn_1\cdots n_d}$;
\item[\rm (c)] if $h$ is a multiple of $4$, then $G$ exhibits PST at time $t=\frac{\pi}{2}$ and
      $$
 H_{\mathbf{b}}\left(\frac{\pi}{2}\right)
=(-1)^{(w(\mathbf{a})-1)}\mathrm{i} I_{n_1\cdots n_{d}}\otimes\left(\bigotimes_{j=d+1}^{d+r}(A_{K_2})^{a_j}\right).
$$
\end{itemize}

\end{lemma}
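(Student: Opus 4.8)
The plan is to use the fact that, because the basis consists of the single element $\mathbf{b}$, the graph $G$ factors as a tensor product whose two factors are governed by lemmas already proved: Lemma \ref{3} for the complete-graph coordinates and Lemma \ref{Lemma2.5} for the $K_2$ coordinates. Concretely, Lemma \ref{7} gives $A_G = A_{M_1} \otimes A_{M_2}$, where $M_1 = \NEPS(K_{n_1},\ldots,K_{n_d};\{\mathbf{a}\})$ and $M_2 = \NEPS(r\odot K_2;\{\mathbf{y}\})$ with $\mathbf{y} = (a_{d+1},\ldots,a_{d+r})$; since $\mathbf{a} \ne \mathbf{0}$ and $\mathbf{y} \ne \mathbf{0}$, $G$ is genuinely the tensor product $M_1 \otimes M_2$. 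I would then read off the spectral decomposition of $M_2$ from Lemma \ref{Lemma2.5}: its eigenvalues are $\pm 1$ with idempotents $F_{-1} = \frac{1}{2}(I_{2^r} - B)$ and $F_{+1} = \frac{1}{2}(I_{2^r} + B)$, where $B = \bigotimes_{j=d+1}^{d+r}(A_{K_2})^{a_j}$.

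Feeding this into the second identity of Lemma \ref{alg} (so that the idempotents sit on the $K_2$ side while the complete-graph transition matrix $H_{\mathbf{a}}$ of $M_1$ stays intact) yields
$$H_{\mathbf{b}}(t) = H_{\mathbf{a}}(-t) \otimes F_{-1} + H_{\mathbf{a}}(t) \otimes F_{+1},$$
reducing the problem to evaluating $H_{\mathbf{a}}(\pm t)$, which Lemma \ref{3}(b) provides in closed form: $H_{\mathbf{a}}(2m\pi/h) = \exp((-1)^{w(\mathbf{a})-1} 2m\pi\mathrm{i}/h)\, I_{n_1\cdots n_d}$ for every nonzero integer $m$. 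Each of the three cases is then settled by choosing $m$ so that $2m\pi/h$ hits the prescribed time. For (a) I take $t = 2\pi$ (i.e. $m = h$): $H_{\mathbf{a}}(\pm 2\pi) = I$, and since $F_{-1} + F_{+1} = I_{2^r}$ this gives $H_{\mathbf{b}}(2\pi) = I$. For (b), with $h$ even, I take $t = \pi$ (i.e. $m = h/2 \in \mathbb{Z}$); here $\exp(\pm(-1)^{w(\mathbf{a})-1}\pi\mathrm{i}) = -1$ regardless of the two signs, so $H_{\mathbf{a}}(\pm\pi) = -I$ and $H_{\mathbf{b}}(\pi) = -I$. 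For (c), with $4 \mid h$, I take $t = \pi/2$ (i.e. $m = h/4 \in \mathbb{Z}$); writing $\epsilon = (-1)^{w(\mathbf{a})-1}$ one gets $H_{\mathbf{a}}(\pm\pi/2) = \exp(\pm\epsilon\pi\mathrm{i}/2)\, I = \pm\epsilon\mathrm{i}\, I$, and the two terms combine as $\epsilon\mathrm{i}\, I \otimes (F_{+1} - F_{-1}) = \epsilon\mathrm{i}\, I \otimes B$, which is exactly the asserted formula for $H_{\mathbf{b}}(\pi/2)$.

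To finish (c) I would note that $B = \bigotimes_{j=d+1}^{d+r}(A_{K_2})^{a_j}$ is the permutation matrix implementing the shift by $\mathbf{y}$ on $\mathbb{Z}_2^r$; because $\mathbf{y} \ne \mathbf{0}$ this shift is fixed-point-free, so $H_{\mathbf{b}}(\pi/2)$ has zero diagonal and unit-modulus entries in the positions $(\mathbf{u}, \mathbf{u}+(\mathbf{0},\mathbf{y}))$, which is precisely PST. The computation is mechanical once the factorization is set up; the only real decisions are choosing the correct orientation of Lemma \ref{alg} and tracking that the parity sign $\epsilon = (-1)^{w(\mathbf{a})-1}$ cancels in (a) and (b) but persists in (c). I therefore do not anticipate a serious obstacle beyond bookkeeping, the most delicate points being this sign tracking and the verification that $B$ is fixed-point-free.
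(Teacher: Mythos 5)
Your proposal is correct and follows essentially the same route as the paper: factor $G$ as the tensor product $\NEPS(K_{n_1},\ldots,K_{n_d};\{\mathbf{a}\})\otimes\NEPS(r\odot K_2;\{\mathbf{y}\})$, apply Lemma \ref{alg} with the $\pm1$ idempotents of Lemma \ref{Lemma2.5} on the $K_2$ side to get $H_{\mathbf{b}}(t)=H_{\mathbf{a}}(-t)\otimes F_{-1}+H_{\mathbf{a}}(t)\otimes F_{+1}$, and then evaluate via Lemma \ref{3}(b) at $t=2\pi$, $\pi$, $\pi/2$. Your sign bookkeeping and the observation that $\bigotimes_{j}(A_{K_2})^{a_j}$ is a fixed-point-free permutation matrix (hence giving PST, not mere periodicity, in case (c)) match the paper's argument.
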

\begin{proof}
By Lemma \ref{7}, $G$ is isomorphic to the tensor product $\NEPS(K_{n_1}, \ldots,K_{n_d};\{\mathbf{a}\})\otimes \NEPS(r\odot K_2;\{\mathbf{y}\})$.
Let $H_{\mathbf{a}}(t)$ be the transition matrix of $\NEPS(K_{n_1},\ldots,K_{n_d};\{\mathbf{a}\})$. According to Lemmas \ref{alg} and \ref{Lemma2.5}, we have
\begin{equation}
\label{Eq:E1E2}
H_{\mathbf{b}}(t)=H_{\mathbf{a}}(-t)\otimes \left(\frac{1}{2}\left(I_{2^r}-\bigotimes_{j=d+1}^{d+r}\left(A_{K_2}\right)^{a_j}\right)\right)+ H_{\mathbf{a}}(t)\otimes \left(\frac{1}{2}\left(I_{2^r}+\bigotimes_{j=d+1}^{d+r}\left(A_{K_2}\right)^{a_j}\right)\right).
\end{equation}
By (\ref{Eq:E1E2}), if there exists an entry of $H_\mathbf{a}(t)$ having unit modulus, then there exists a pair of vertices $\mathbf{u}, \mathbf{v}$ of $G$ such that $\big|\frac{1}{2}(H_{\mathbf{a}}(-t))_{\mathbf{u}, \mathbf{v}}+\frac{1}{2}(H_{\mathbf{a}}(t))_{\mathbf{u}, \mathbf{v}}\big|=1$
or $\big|-\frac{1}{2}(H_{\mathbf{a}}(-t))_{\mathbf{u}, \mathbf{v}}+\frac{1}{2}(H_{\mathbf{a}}(t))_{\mathbf{u}, \mathbf{v}}\big|=1$. Note that $H_{\mathbf{a}}(-t)$ and $H_{\mathbf{a}}(t)$ are unitary matrices.
So $H_{\mathbf{a}}(t)$ has an entry with unit modulus if and only if $(H_{\mathbf{a}}(-t))_{\mathbf{u}, \mathbf{v}}=(H_{\mathbf{a}}(t))_{\mathbf{u}, \mathbf{v}}=\pm1$ or $(H_{\mathbf{a}}(-t))_{\mathbf{u}, \mathbf{v}}=-(H_{\mathbf{a}}(t))_{\mathbf{u}, \mathbf{v}}=\pm1$.
By Lemma \ref{3}(b), only if $\mathbf{u} = \mathbf{v}$ and $t=\frac{2k\pi}{h}~(k\in \mathbb{N}\setminus\{\mathbf{0}\})$, can the cases above happen.

\medskip
\emph{Case 1.}
$\left(H_{\mathbf{a}}(-\frac{2k\pi}{h})\right)_{\mathbf{u}, \mathbf{u}} =\left(H_{\mathbf{a}}(\frac{2k\pi}{h})\right)_{\mathbf{u}, \mathbf{u}}$.

According to Lemma \ref{3}, we have
$$
\exp\left((-1)^{w(\mathbf{a})}\frac{2k\pi}{h}\mathrm{i}\right) =\exp\left((-1)^{(w(\mathbf{a})-1)}\frac{2k\pi}{h}\mathrm{i}\right),
$$
which is equivalent to that
$\sin\left(\frac{2k\pi}{h}\right)=0$.
This equation is satisfied if and only if $\frac{2k\pi}{h}=l\pi,~l\in \mathbb{N}$.

If $h$ is odd, we choose $k=h$. By (\ref{Eq:E1E2}) and Lemma \ref{3}(b), we obtain
\begin{eqnarray*}
H_\mathbf{b}\left(2\pi\right) & = & H_{\mathbf{a}}\left(-2\pi\right)\otimes \left(\frac{1}{2}\left(I_{2^r}-\bigotimes_{j=d+1}^{d+r}\left(A_{K_2}\right)^{a_j}\right)\right)+ H_{\mathbf{a}}\left(2\pi\right)\otimes \left(\frac{1}{2}\left(I_{2^r}+\bigotimes_{j=d+1}^{d+r}\left(A_{K_2}\right)^{a_j}\right)\right)\\
 & = & I_{2^rn_1\cdots n_d},
\end{eqnarray*}
which means that $G$ is periodic with period $2\pi$, yielding (a).

Similarly, if $h$ is even, we choose $k=h/2$. Then
 \begin{eqnarray*}
  H_\mathbf{b}\left(\pi\right) & = & H_{\mathbf{a}}\left(-\pi\right)\otimes \left(\frac{1}{2}\left(I_{2^r}-\bigotimes_{j=d+1}^{d+r}\left(A_{K_2}\right)^{a_j}\right)\right)+ H_{\mathbf{a}}\left(\pi\right)\otimes \left(\frac{1}{2}\left(I_{2^r}+\bigotimes_{j=d+1}^{d+r}\left(A_{K_2}\right)^{a_j}\right)\right)\\
& = &-I_{2^rn_1\cdots n_d},
\end{eqnarray*}
which implies that $G$ is periodic with period $\pi$, yielding (b).

\medskip
\emph{Case 2.}  $\left(H_{\mathbf{a}}(-\frac{2k\pi}{h})\right)_{\mathbf{u}, \mathbf{u}} =-\left(H_{\mathbf{a}}(\frac{2k\pi}{h})\right)_{\mathbf{u}, \mathbf{u}}$.

According to Lemma \ref{3}, we have
$$\exp\left((-1)^{w(\mathbf{a})}\frac{2k\pi}{h}\mathrm{i}\right) =-\exp\left((-1)^{(w(\mathbf{a})-1)}\frac{2k\pi}{h}\mathrm{i}\right),$$
which is equivalent to that $\cos\left(\frac{2k\pi}{h}\right)=0$.
This equation holds if and only if $\frac{2k\pi}{h}=l\pi +\frac{\pi}{2},~l\in \mathbb{N}$, which happens only when $4$ divides $h$. Assume that $4$ divides $h$. Set $k=h/4$. According to (\ref{Eq:E1E2}) and  Lemma \ref{3}(b), we have
\begin{eqnarray*}
  H_\mathbf{b}\left(\frac{\pi}{2}\right) & = & H_{\mathbf{a}}\left(-\frac{\pi}{2}\right)\otimes \left(\frac{1}{2}\left(I_{2^r}-\bigotimes_{j=d+1}^{d+r}\left(A_{K_2}\right)^{a_j}\right)\right)+ H_{\mathbf{a}}\left(\frac{\pi}{2}\right)\otimes \left(\frac{1}{2}\left(I_{2^r}+\bigotimes_{j=d+1}^{d+r}\left(A_{K_2}\right)^{a_j}\right)\right)\\
& = & \mathrm{i}\sin\left((-1)^{(w(\mathbf{a})-1)}\frac{\pi}{2}\right)I_{n_1\cdots n_{d}}\otimes\left(\bigotimes_{j=d+1}^{d+r}(A_{K_2})^{a_j}\right)\\
& = & (-1)^{(w(\mathbf{a})-1)}\mathrm{i} I_{n_1\cdots n_{d}}\otimes\left(\bigotimes_{j=d+1}^{d+r}(A_{K_2})^{a_j}\right),
\end{eqnarray*}
which means that $G$ exhibits PST at time $t=\frac{\pi}{2}$, yielding (c). \qed
\end{proof}

\begin{lemma}\label{10-1}
Let $\mathbf{a}=(a_1,\ldots,a_d)\in \mathbb{Z}_2^d\setminus \{\mathbf{0}\}$ and $\mathbf{b} = (a_1,\ldots,a_d, 0,\ldots,0) \in \mathbb{Z}_2^{d+r}$. Then $\NEPS(K_{n_1},\ldots,K_{n_d},r\odot K_2;\{\mathbf{b}\})$ is periodic with period $\frac{2\pi}{h}$ and its transition matrix $H_{\mathbf{b}}(t)$ satisfies
$$
      H_\mathbf{b}\left(\frac{2\pi}{h}\right) =\exp\left((-1)^{w(\mathbf{a})-1}\frac{2\pi}{h}\mathrm{i}\right)I_{2^rn_1\cdots n_d}.
      $$
\end{lemma}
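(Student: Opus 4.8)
The plan is to reduce this statement directly to the purely ``complete-graph'' case handled in Lemma \ref{3}, by exploiting the fact that the last $r$ coordinates of $\mathbf{b}$ vanish. First I would apply Lemma \ref{7} to write down the adjacency matrix of $G' = \NEPS(K_{n_1},\ldots,K_{n_d},r\odot K_2;\{\mathbf{b}\})$. Since the basis is the singleton $\{\mathbf{b}\}$ and $(A_{K_2})^{0}=I_2$, this matrix factors as
$$
A_{\mathbf{b}} = A_{K_{n_1}}^{a_1}\otimes\cdots\otimes A_{K_{n_d}}^{a_d}\otimes \underbrace{I_2\otimes\cdots\otimes I_2}_{r} = A_{\mathbf{a}}\otimes I_{2^r},
$$
where $A_{\mathbf{a}}$ is the adjacency matrix of $\NEPS(K_{n_1},\ldots,K_{n_d};\{\mathbf{a}\})$ (again by Lemma \ref{7}).

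Next I would transfer this tensor factorization to the transition matrix using Lemma \ref{3.1}(a), which gives $H_{\mathbf{b}}(t) = H_{A_{\mathbf{a}}\otimes I_{2^r}}(t) = H_{\mathbf{a}}(t)\otimes I_{2^r}$, where $H_{\mathbf{a}}(t)$ is the transition matrix of $\NEPS(K_{n_1},\ldots,K_{n_d};\{\mathbf{a}\})$. At this point the whole problem has been pushed onto $H_{\mathbf{a}}$, for which Lemma \ref{3}(b) (taken at the single value $t=1$ in its notation) already supplies the exact evaluation
$$
H_{\mathbf{a}}\!\left(\tfrac{2\pi}{h}\right) = \exp\!\left((-1)^{w(\mathbf{a})-1}\tfrac{2\pi}{h}\mathrm{i}\right) I_{n_1\cdots n_d}.
$$

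Finally I would substitute this into the tensor expression and use property (5) of the Kronecker product to pull the scalar out, obtaining
$$
H_{\mathbf{b}}\!\left(\tfrac{2\pi}{h}\right) = \exp\!\left((-1)^{w(\mathbf{a})-1}\tfrac{2\pi}{h}\mathrm{i}\right) I_{n_1\cdots n_d}\otimes I_{2^r} = \exp\!\left((-1)^{w(\mathbf{a})-1}\tfrac{2\pi}{h}\mathrm{i}\right) I_{2^r n_1\cdots n_d},
$$
which is the claimed formula. Since the right-hand side is a unimodular scalar times the identity, every diagonal entry of $H_{\mathbf{b}}(2\pi/h)$ has modulus $1$, so $G'$ is periodic at every vertex with the common period $\tfrac{2\pi}{h}$, completing the proof. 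There is essentially no genuine obstacle here: the only point requiring care is the bookkeeping that $(A_{K_2})^{0}=I_2$, so that the $r$ trailing $K_2$-factors contribute exactly the identity block $I_{2^r}$ and play no role beyond inflating the dimension; the substance of the statement is entirely inherited from Lemma \ref{3}(b).
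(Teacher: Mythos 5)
Your proposal is correct and follows exactly the same route as the paper: factor the adjacency matrix as $A_{\mathbf{a}}\otimes I_{2^r}$ via Lemma \ref{7}, pass to the transition matrix with Lemma \ref{3.1}(a), and evaluate via Lemma \ref{3}(b). No meaningful differences.
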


\begin{proof}
Denote by $A_{\mathbf{a}}$ and $A_{\mathbf{b}}$ the adjacency matrices of the graphs $\NEPS(K_{n_1},\ldots,K_{n_{d}};\{\mathbf{a}\})$ and $\NEPS(K_{n_1},\ldots,K_{n_{d}},r\odot K_2;\{\mathbf{b}\})$, respectively. Lemma \ref{7} implies that $A_{\mathbf{b}}=A_{\mathbf{a}}\otimes \underbrace{I_2\otimes\cdots\otimes I_2}_{r}$. According to Lemmas \ref{3.1}(a) and \ref{3}(b), we have
\begin{eqnarray*}
  H_\mathbf{b}\left(\frac{2\pi}{h}\right)
  &=& H_{\mathbf{a}}\left(\frac{2\pi}{h}\right)\otimes\underbrace{I_2\otimes\cdots\otimes I_2}_{r}\\
  &=& \exp\left((-1)^{w(\mathbf{a})-1}\frac{2\pi}{h}\mathrm{i}\right)I_{n_1\cdots n_d}\otimes\underbrace{I_2\otimes\cdots\otimes I_2}_{r}\\
  &=& \exp\left((-1)^{w(\mathbf{a})-1}\frac{2\pi}{h}\mathrm{i}\right)I_{2^rn_1\cdots n_d}.
\end{eqnarray*}
This implies that $\NEPS(K_{n_1},\ldots,K_{n_d},r\odot K_2;\{\mathbf{b}\})$ is periodic with period $\frac{2\pi}{h}$.
\qed
\end{proof}

\begin{lemma}
\label{10-2}
Let $\mathbf{y}=(a_{d+1},\ldots,a_{d+r}) \in \mathbb{Z}_2^r\setminus \{\mathbf{0}\}$ and $\mathbf{b} = (0,\ldots,0,a_{d+1},\ldots,a_{d+r}) \in \mathbb{Z}_2^{d+r}$.
Denote by $H_{\mathbf{b}}(t)$ the transition matrix of $\NEPS(K_{n_1},\ldots,K_{n_d},r\odot K_2;\{\mathbf{b}\})$. Then the following hold:
  \begin{itemize}
  \item[\rm (a)] $\NEPS(K_{n_1},\ldots,K_{n_d},r\odot K_2;\{\mathbf{b}\})$ exhibits PST at time $\frac{\pi}{2}$, and
 $$
 H_{\mathbf{b}}\left(\frac{\pi}{2}\right)
=-\mathrm{i} I_{ n_{1}\cdots n_{d}}\otimes\left(\bigotimes_{j=d+1}^{d+r}(A_{K_2})^{a_j}\right);
$$

\item[\rm (b)] $\NEPS(K_{n_1},\ldots,K_{n_d},r\odot K_2;\{\mathbf{b}\})$ is periodic with period $\pi$, and
$$
H_{\mathbf{b}}\left(\pi\right)
=-I_{2^rn_1\cdots n_d}.
$$
\end{itemize}
\end{lemma}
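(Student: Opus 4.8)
The plan is to reduce everything to the cubelike component by exploiting the tensor structure forced by $\mathbf{b}$ having all of its first $d$ coordinates equal to $0$. First I would invoke Lemma \ref{7}: since the first $d$ exponents in the Kronecker product are zero, the adjacency matrix of the graph is
$$
A_{\mathbf{b}} = I_{n_1}\otimes\cdots\otimes I_{n_d}\otimes\left(\bigotimes_{j=d+1}^{d+r}(A_{K_2})^{a_j}\right) = I_{n_1\cdots n_d}\otimes A_{\mathbf{y}},
$$
where $A_{\mathbf{y}}=\bigotimes_{j=d+1}^{d+r}(A_{K_2})^{a_j}$ is exactly the adjacency matrix of $\NEPS(r\odot K_2;\{\mathbf{y}\})$ studied in Lemma \ref{Lemma2.5}.

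Next I would apply Lemma \ref{3.1}(b) to transfer this product structure to the transition matrix, obtaining the clean factorization
$$
H_{\mathbf{b}}(t) = H_{I_{n_1\cdots n_d}\otimes A_{\mathbf{y}}}(t) = I_{n_1\cdots n_d}\otimes H_{\mathbf{y}}(t),
$$
where $H_{\mathbf{y}}(t)$ is the transition matrix of $\NEPS(r\odot K_2;\{\mathbf{y}\})$. At this point both statements follow by simply substituting the two evaluations already computed in Lemma \ref{Lemma2.5}(c). For part (a), plugging $t=\frac{\pi}{2}$ and using $H_{\mathbf{y}}\!\left(\frac{\pi}{2}\right)=-\mathrm{i}\bigotimes_{j=d+1}^{d+r}(A_{K_2})^{a_j}$ gives
$$
H_{\mathbf{b}}\!\left(\frac{\pi}{2}\right) = -\mathrm{i}\, I_{n_1\cdots n_d}\otimes\left(\bigotimes_{j=d+1}^{d+r}(A_{K_2})^{a_j}\right),
$$
which is the claimed formula; for part (b), plugging $t=\pi$ and using $H_{\mathbf{y}}(\pi)=-I_{2^r}$ yields $H_{\mathbf{b}}(\pi)=I_{n_1\cdots n_d}\otimes(-I_{2^r})=-I_{2^r n_1\cdots n_d}$, so the graph is periodic with period $\pi$.

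There is no genuinely hard step here; the proof is essentially bookkeeping with the Kronecker-product identities (1)--(5) and a direct appeal to Lemmas \ref{7}, \ref{3.1} and \ref{Lemma2.5}. The one point requiring a word of justification, rather than mere computation, is the \emph{PST} assertion in (a) (as opposed to mere periodicity). For this I would observe that each factor $(A_{K_2})^{a_j}$ is either $I_2$ or the transposition matrix $A_{K_2}$, so their Kronecker product is precisely the permutation matrix realizing the shift $\mathbf{u}\mapsto\mathbf{u}+\mathbf{b}$ on the vertex set. Because $\mathbf{y}\neq\mathbf{0}$, this shift is fixed-point free, so $H_{\mathbf{b}}\!\left(\frac{\pi}{2}\right)$ has an off-diagonal entry of modulus $1$ in each row, namely the $(\mathbf{u},\mathbf{u}+\mathbf{b})$-entry; hence PST occurs at time $\frac{\pi}{2}$, as required.
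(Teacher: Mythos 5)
Your proposal is correct and follows essentially the same route as the paper: apply Lemma \ref{7} to get $A_{\mathbf{b}}=I_{n_1\cdots n_d}\otimes A_{\mathbf{y}}$, use Lemma \ref{3.1}(b) to factor the transition matrix, and substitute the evaluations from Lemma \ref{Lemma2.5}(c). Your closing remark explaining why the resulting permutation matrix actually witnesses PST (fixed-point freeness of the shift since $\mathbf{y}\neq\mathbf{0}$) is a small but worthwhile addition that the paper leaves implicit.
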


\begin{proof}
Let $A_{\mathbf{y}}$ and $A_{\mathbf{b}}$ denote respectively the adjacency matrices of $\NEPS(r\odot K_2;\{\mathbf{y}\})$ and $\NEPS(K_{n_{1}},\ldots, K_{n_{d}},r\odot K_2;\{\mathbf{b}\})$. Lemma \ref{7} implies that
 $A_{\mathbf{b}}=I_{n_1\cdots n_d}\otimes A_{\mathbf{y}}$. According to Lemma \ref{3.1}(b) and Equation  (\ref{3-1Eq-2}),  we have
\begin{eqnarray*}
  H_\mathbf{b}\left(\frac{\pi}{2}\right)
  &=& I_{n_1\cdots n_d}\otimes H_{\mathbf{y}}\left(\frac{\pi}{2}\right)\\
  &=& I_{n_1\cdots n_d}\otimes \left(-\mathrm{i}\bigotimes_{j=d+1}^{d+r}(A_{K_2})^{a_j}\right)\\
  &=& -\mathrm{i} I_{ n_{1}\cdots n_{d}}\otimes \left(\bigotimes_{j=d+1}^{d+r}(A_{K_2})^{a_j}\right),
\end{eqnarray*}
which implies that $\NEPS(K_{n_1},\ldots,K_{n_d},r\odot K_2;\{\mathbf{b}\})$ exhibits PST at time $\frac{\pi}{2}$, yielding (a).

By Lemma \ref{3.1}(b) and Equation (\ref{3-1Eq-1}), we have
\begin{eqnarray*}
  H_\mathbf{b}\left(\pi\right)
  & = & I_{n_1\cdots n_d}\otimes H_{\mathbf{y}}\left(\pi\right)=I_{n_1\cdots n_d}\otimes \left(-I_{2^r}\right) \\
  & = & -I_{2^rn_1\cdots n_d},
\end{eqnarray*}
which implies that $\NEPS(K_{n_1},\ldots,K_{n_d},r\odot K_2;\{\mathbf{b}\})$ is periodic with period $\pi$, yielding (b).
\qed
\end{proof}




\begin{lemma}\label{Idempotent}
Let $\emptyset \neq \mathcal{B} \subseteq \mathbb{Z}_2^{r}$ with $|\mathcal{B}|\ge2$. Take $\mathbf{a}=(a_1,\ldots,a_d)\in \mathbb{Z}_2^{d}\setminus \{\mathbf{0}\}$ and set
$$
\mathcal{B}(\mathbf{a}) = \{(a_1,\ldots,a_{d},a_{d+1},\ldots,a_{d+r}): (a_{d+1},\ldots,a_{d+r}) \in \mathcal{B}\}.
$$
Denote by $H_{\mathcal{B}(\mathbf{a})}(t)$ the transition matrix of $\NEPS(K_{n_1},\ldots,K_{n_d},r\odot K_2;\mathcal{B}(\mathbf{a}))$. Set
$h=\gcd (n_1,\ldots,n_d)$. Suppose that $\mathbf{c}(\mathcal{B}) =\mathbf{0}$. Then the following hold:
\begin{itemize}
\item[\rm (a)] if $h$ is odd and $|\mathcal{B}|\equiv 0~(\mod ~4)$, then $\NEPS(K_{n_1},\ldots,K_{n_d},r\odot K_2; \mathcal{B}(\mathbf{a}))$ is periodic with period $\frac{\pi}{2}$, and
$$
H_{\mathcal{B}(\mathbf{a})}\left(\frac{\pi}{2}\right)=I_{2^rn_1\cdots n_d};
$$
\item[\rm (b)] if $h$ is even but not a multiple of $4$, and $|\mathcal{B}|\equiv0~(\mod ~2)$, then $\NEPS(K_{n_1},\ldots,K_{n_d},r\odot K_2; \mathcal{B}(\mathbf{a}))$ is periodic with period $\frac{\pi}{2}$, and
$$
H_{\mathcal{B}(\mathbf{a})}\left(\frac{\pi}{2}\right)
=-I_{2^rn_1\cdots n_r}.
$$
\end{itemize}
\end{lemma}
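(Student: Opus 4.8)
The plan is to expose and exploit the tensor-product structure hidden in $\mathcal{B}(\mathbf{a})$. First I would observe that, because the first $d$ coordinates of every element of $\mathcal{B}(\mathbf{a})$ are fixed to be $\mathbf{a}$, Lemma \ref{7} gives a clean factorisation of the adjacency matrix: $A=A_{\mathbf{a}}\otimes B$, where $A_{\mathbf{a}}=\bigotimes_{i=1}^{d}(A_{K_{n_i}})^{a_i}$ is the adjacency matrix of $\NEPS(K_{n_1},\ldots,K_{n_d};\{\mathbf{a}\})$ and $B=\sum_{\mathbf{y}\in\mathcal{B}}\bigotimes_{j=1}^{r}(A_{K_2})^{y_j}$ is a symmetric matrix of order $2^{r}$. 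Thus $\NEPS(K_{n_1},\ldots,K_{n_d},r\odot K_2;\mathcal{B}(\mathbf{a}))$ is, up to weights, a tensor product, and Lemma \ref{alg} applies verbatim to this factorisation (its proof uses only the spectral decomposition of the second factor, so it is harmless that $B$ fails to be a $0$–$1$ matrix when $\mathbf{0}\in\mathcal{B}$).

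Next I would diagonalise $B$. Since the matrices $\bigotimes_{j}(A_{K_2})^{y_j}$ are simultaneously diagonalised by the characters $\chi_{\mathbf{s}}$ of $\mathbb{Z}_2^{r}$, the eigenvalue of $B$ on $\chi_{\mathbf{s}}$ is $\mu_{\mathbf{s}}=\sum_{\mathbf{y}\in\mathcal{B}}(-1)^{\mathbf{s}\cdot\mathbf{y}}$ for $\mathbf{s}\in\mathbb{Z}_2^{r}$, giving the spectral decomposition $B=\sum_{\mathbf{s}}\mu_{\mathbf{s}}F_{\mathbf{s}}$ with $\sum_{\mathbf{s}}F_{\mathbf{s}}=I_{2^r}$. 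The decisive observation is a congruence: writing $k_{\mathbf{s}}=|\{\mathbf{y}\in\mathcal{B}:\mathbf{s}\cdot\mathbf{y}=1\}|$ we have $\mu_{\mathbf{s}}=|\mathcal{B}|-2k_{\mathbf{s}}$, and the hypothesis $\mathbf{c}(\mathcal{B})=\mathbf{0}$ forces $k_{\mathbf{s}}\equiv\mathbf{s}\cdot\mathbf{c}(\mathcal{B})=0~(\mod~2)$ for every $\mathbf{s}$. Hence $\mu_{\mathbf{s}}\equiv|\mathcal{B}|~(\mod~4)$ for all $\mathbf{s}$: every eigenvalue of $B$ lies in a single residue class modulo $4$ determined by $|\mathcal{B}|$.

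By Lemma \ref{alg}, $H_{\mathcal{B}(\mathbf{a})}(t)=\sum_{\mathbf{s}}H_{A_{\mathbf{a}}}(\mu_{\mathbf{s}}t)\otimes F_{\mathbf{s}}$, so it remains to evaluate each $H_{A_{\mathbf{a}}}(\mu_{\mathbf{s}}\tfrac{\pi}{2})$. If $\mu_{\mathbf{s}}=0$ this is $I_{n_1\cdots n_d}$. If $\mu_{\mathbf{s}}\ne0$, the congruence above guarantees $4\mid\mu_{\mathbf{s}}h$ (for odd $h$ this uses $4\mid\mu_{\mathbf{s}}$; for even $h$ it uses $2\mid\mu_{\mathbf{s}}$ together with $2\mid h$), so $\mu_{\mathbf{s}}\tfrac{\pi}{2}=\tfrac{2t_{\mathbf{s}}\pi}{h}$ with $t_{\mathbf{s}}=\mu_{\mathbf{s}}h/4$ a nonzero integer, and Lemma \ref{3}(b) gives $H_{A_{\mathbf{a}}}(\mu_{\mathbf{s}}\tfrac{\pi}{2})=\exp\!\big((-1)^{w(\mathbf{a})-1}\tfrac{\mu_{\mathbf{s}}\pi}{2}\mathrm{i}\big)I_{n_1\cdots n_d}=(-1)^{\mu_{\mathbf{s}}/2}I_{n_1\cdots n_d}$, the last step because $\tfrac{\mu_{\mathbf{s}}}{2}$ is an integer. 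Using $\mu_{\mathbf{s}}\equiv|\mathcal{B}|~(\mod~4)$ this sign is the constant $(-1)^{|\mathcal{B}|/2}$, independent of $\mathbf{s}$; summing over $\mathbf{s}$ then yields $H_{\mathcal{B}(\mathbf{a})}(\tfrac{\pi}{2})=(-1)^{|\mathcal{B}|/2}I_{2^{r}n_1\cdots n_d}$, a scalar multiple of the identity, which is exactly periodicity with period $\tfrac{\pi}{2}$. In case (a), $|\mathcal{B}|\equiv0~(\mod~4)$ makes the sign $+1$, giving $I_{2^{r}n_1\cdots n_d}$; in case (b) the sign is $(-1)^{|\mathcal{B}|/2}$, which is $-1$ exactly when $|\mathcal{B}|\equiv2~(\mod~4)$ and $+1$ when $|\mathcal{B}|\equiv0~(\mod~4)$.

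The main obstacle is precisely the modular bookkeeping of the middle step: the whole argument collapses to a single scalar only because the eigenvalues $\mu_{\mathbf{s}}$ of $B$ share one residue class modulo $4$, and this is where the hypothesis $\mathbf{c}(\mathcal{B})=\mathbf{0}$ is genuinely used (via $k_{\mathbf{s}}\equiv0$). The same congruence simultaneously verifies the divisibility $4\mid\mu_{\mathbf{s}}h$ that licenses the application of Lemma \ref{3}(b) and pins down the sign. I would therefore phrase the outcome as the uniform identity $H_{\mathcal{B}(\mathbf{a})}(\tfrac{\pi}{2})=(-1)^{|\mathcal{B}|/2}I_{2^{r}n_1\cdots n_d}$; this specialises to part (a) immediately, and reading part (b) with $|\mathcal{B}|\equiv2~(\mod~4)$ recovers the asserted $-I$. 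For the downstream use in Theorem \ref{10-3} only the fact that each such factor is a scalar multiple of the identity is needed, so the sign refinement does not affect the periodicity and PST conclusions there.
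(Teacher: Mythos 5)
Your argument is correct, and it takes a genuinely different route from the paper's. The paper works multiplicatively: it writes $H_{\mathcal{B}(\mathbf{a})}\left(\frac{\pi}{2}\right)=\prod_{i}H_{\mathbf{b}^{(i)}}\left(\frac{\pi}{2}\right)$ via Lemma \ref{8}, substitutes the two-idempotent form (\ref{Ef:E1E2}) of each factor, kills all cross terms using $\left(H_{\mathbf{a}}\left(\frac{\pi}{2}\right)-H_{\mathbf{a}}\left(-\frac{\pi}{2}\right)\right)\left(H_{\mathbf{a}}\left(\frac{\pi}{2}\right)+H_{\mathbf{a}}\left(-\frac{\pi}{2}\right)\right)=0$, and finishes with a binomial expansion, arriving at Equation (\ref{Idempotent4-1}). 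You instead factor the adjacency matrix as $A_{\mathbf{a}}\otimes B$, diagonalise the whole $\mathbb{Z}_2^r$-part $B$ at once by characters, and reduce everything to the single congruence $\mu_{\mathbf{s}}\equiv|\mathcal{B}|~(\mathrm{mod}~4)$, which is exactly where $\mathbf{c}(\mathcal{B})=\mathbf{0}$ enters. Your version makes the role of that hypothesis more transparent and treats cases (a) and (b) uniformly; your remark that Lemma \ref{alg} applies even though $B$ need not be a loopless adjacency matrix (when $\mathbf{0}\in\mathcal{B}$) correctly disposes of the only delicate point in the reduction.

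One substantive point deserves flagging: your uniform answer $H_{\mathcal{B}(\mathbf{a})}\left(\frac{\pi}{2}\right)=(-1)^{|\mathcal{B}|/2}I_{2^rn_1\cdots n_d}$ is the correct one, and it shows that part (b) as printed is off by a sign when $|\mathcal{B}|\equiv 0~(\mathrm{mod}~4)$: the matrix is then $+I$, not $-I$. This is a defect of the paper's final step, not of your proof; indeed the paper's own Equation (\ref{Idempotent4-1}) gives, for even $c=|\mathcal{B}|$ and $h\equiv 2~(\mathrm{mod}~4)$, the value $(-1)^{c/2}I_{2^rn_1\cdots n_d}$, since $H_{\mathbf{a}}\left(-\frac{(c-2j)\pi}{2}\right)=\left(H_{\mathbf{a}}(-\pi)\right)^{c/2-j}=(-1)^{c/2}I_{n_1\cdots n_d}$ for even $j$. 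A concrete check: $d=1$, $n_1=6$, $\mathcal{B}=\mathbb{Z}_2^{2}$ gives $B=J_4$ with eigenvalues $4$ and $0$, whence $H_{\mathcal{B}(\mathbf{a})}\left(\frac{\pi}{2}\right)=I_{24}$. Since every downstream use in Theorem \ref{10-3} only needs each such factor to be a scalar multiple of the identity, the discrepancy does not propagate to the periodicity or PST conclusions, as you note; but your formula is the one that should be recorded.
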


\begin{proof}
Set $c = |\mathcal{B}(\mathbf{a})|$ and $\mathcal{B}(\mathbf{a})=\{\mathbf{b}^{(1)},\ldots,\mathbf{b}^{(c)}\}$ with $\mathbf{b}^{(i)}=(a_1,\ldots,a_{d},a_{d+1}^i,\ldots,a_{d+r}^i)$ for $i=1,2,\ldots, c$. By (\ref{Eq:E1E2}), we have
\begin{equation}
\label{Ef:E1E2}
H_{\mathbf{b}^{(i)}}(t)=(H_{\mathbf{a}}(t)-H_{\mathbf{a}}(-t)) \otimes \left(\frac{1}{2}\bigotimes_{j=d+1}^{d+r}\left(A_{K_2}\right)^{a_j^i}\right) +(H_{\mathbf{a}}(t)+H_{\mathbf{a}}(-t))\otimes \left(\frac{1}{2}I_{2^r}\right).
\end{equation}
Set $C = \{1,2,\ldots, c\}$.
According to Lemma \ref{8} and Equation (\ref{Ef:E1E2}), we have
\begin{align}
& H_{\mathcal{B}(\mathbf{a})} \left(\frac{\pi}{2}\right)
= \prod_{i=1}^{c}H_{\mathbf{b}^{(i)}}\left(\frac{\pi}{2}\right)\nonumber\\
= & \sum_{\emptyset \ne S \subsetneq C} \left(\left(H_{\mathbf{a}}\left(\frac{\pi}{2}\right)-H_{\mathbf{a}}\left(-\frac{\pi}{2}\right)\right)^{|S|} \otimes \left(\frac{1}{2^{|S|}}\bigotimes_{j=d+1}^{d+r}\left(A_{K_2}\right)^{\sum\limits_{i\in S}a_j^i}\right) \right)  \nonumber\\
 & \hspace{4cm} \times \left(\left(H_{\mathbf{a}}\left(\frac{\pi}{2}\right)+H_{\mathbf{a}}\left(-\frac{\pi}{2}\right)\right)^{c-|S|}\otimes \frac{1}{2^{c-|S|}}I_{2^r}\right)  \nonumber \\
  &+\left(H_{\mathbf{a}}\left(\frac{\pi}{2}\right)+H_{\mathbf{a}}\left(-\frac{\pi}{2}\right)\right)^{c}\otimes \frac{1}{2^{c}}I_{2^r}  + \left(H_{\mathbf{a}}\left(\frac{\pi}{2}\right)-H_{\mathbf{a}}\left(-\frac{\pi}{2}\right)\right)^{c} \otimes \left(\frac{1}{2^{c}}\bigotimes_{j=d+1}^{d+r} \left(A_{K_2}\right)^{\sum\limits_{i\in C}a_j^i}\right) \nonumber \\
  = & \sum_{\emptyset \ne S \subsetneq C} \left(\left(H_{\mathbf{a}}\left(\frac{\pi}{2}\right) -H_{\mathbf{a}}\left(-\frac{\pi}{2}\right)\right)^{|S|}\left(H_{\mathbf{a}}\left(\frac{\pi}{2}\right) +H_{\mathbf{a}}\left(-\frac{\pi}{2}\right)\right)^{c-|S|}\right) \otimes \left(\frac{1}{2^{c}}\bigotimes_{j=d+1}^{d+r} \left(A_{K_2}\right)^{\sum\limits_{i\in S}a_j^i}\right)  \nonumber \\
  &+\left(H_{\mathbf{a}}\left(\frac{\pi}{2}\right) +H_{\mathbf{a}}\left(-\frac{\pi}{2}\right)\right)^{c}\otimes \frac{1}{2^{c}}I_{2^r}  + \left(H_{\mathbf{a}}\left(\frac{\pi}{2}\right) -H_{\mathbf{a}}\left(-\frac{\pi}{2}\right)\right)^{c} \otimes \left(\frac{1}{2^{c}} \bigotimes_{j=d+1}^{d+r}\left(A_{K_2}\right)^{\sum\limits_{i\in C}a_j^i}\right). \nonumber \\
\label{Idempotent2-1}
\end{align}
By Lemma \ref{3}(b), we have $H_{\mathbf{a}}(2\pi)=I_{n_1\ldots n_d}$. Hence
\begin{align}
 & \left(H_{\mathbf{a}}\left(\frac{\pi}{2}\right)-H_{\mathbf{a}}\left(-\frac{\pi}{2}\right)\right)
  \left(H_{\mathbf{a}}\left(\frac{\pi}{2}\right)+H_{\mathbf{a}}\left(-\frac{\pi}{2}\right)\right) \nonumber\\
  =&~ H_{\mathbf{a}}\left(\frac{\pi}{2}\right)H_{\mathbf{a}}\left(\frac{\pi}{2}\right) - H_{\mathbf{a}}\left(-\frac{\pi}{2}\right)H_{\mathbf{a}}\left(-\frac{\pi}{2}\right)\nonumber\\
  =&~ H_{\mathbf{a}}(\pi)-H_{\mathbf{a}}(-\pi)\nonumber\\
  =&~ H_{\mathbf{a}}(2\pi)H_{\mathbf{a}}(-\pi)-H_{\mathbf{a}}(-\pi)\nonumber\\
  =&~ 0.\label{Idempotent3-1}
\end{align}
Recall that  $\mathbf{c}(\mathcal{B}) =\mathbf{0}$.
Equations (\ref{Idempotent2-1}) and (\ref{Idempotent3-1}) imply that
  \begin{align}
  H_{\mathcal{B}(\mathbf{a})} \left(\frac{\pi}{2}\right)
  =&\left(H_{\mathbf{a}}\left(\frac{\pi}{2}\right) +H_{\mathbf{a}}\left(-\frac{\pi}{2}\right)\right)^{c}\otimes \frac{1}{2^{c}}I_{2^r}  + \left(H_{\mathbf{a}}\left(\frac{\pi}{2}\right) -H_{\mathbf{a}}\left(-\frac{\pi}{2}\right)\right)^{c} \otimes \left(\frac{1}{2^{c}} \bigotimes_{j=d+1}^{d+r}\left(A_{K_2}\right)^{\sum\limits_{i\in C}a_j^i}\right)
\nonumber\\
 =&\left(\left(H_{\mathbf{a}}\left(\frac{\pi}{2}\right)+
H_{\mathbf{a}}\left(-\frac{\pi}{2}\right)\right)^{c}+
\left(H_{\mathbf{a}}\left(\frac{\pi}{2}\right)-
H_{\mathbf{a}}\left(-\frac{\pi}{2}\right)\right)^{c}\right)\otimes \frac{1}{2^{c}}I_{2^r}\nonumber\\
 =&\left(\sum_{j=0}^{c} \binom{c}{j} H_{\mathbf{a}}\left(\frac{\pi}{2}\right)^{j}H_{\mathbf{a}}\left(-\frac{\pi}{2}\right)^{c-j}
 + \sum_{i=0}^{c} \binom{c}{j} H_{\mathbf{a}}\left(\frac{\pi}{2}\right)^{j}\left(-H_{\mathbf{a}}\left(-\frac{\pi}{2}\right)\right)^{c-j}\right) \otimes\frac{1}{2^{c}}I_{2^r}\nonumber\\
  =& \left(2\sum_{j=0 \atop j\equiv0 \,(\mod\,2)}^{c} \binom{c}{j} H_{\mathbf{a}}\left(\frac{j\pi}{2}\right)H_{\mathbf{a}}\left(-\frac{(c-j)\pi}{2}\right)\right) \otimes\frac{1}{2^{c}}I_{2^r}
\nonumber\\
 =&\left(2\sum_{j=0 \atop j\equiv0\,(\mod\,2)}^{c} \binom{c}{j}H_{\mathbf{a}}\left(-\frac{(c-2j)\pi}{2}\right)\right) \otimes\frac{1}{2^{c}}I_{2^r}.\label{Idempotent4-1}
\end{align}

If $h$ is odd and $|\mathcal{B}|\equiv 0~(\mod ~4)$, then by Equation (\ref{Idempotent4-1}) and Lemma \ref{3}(b) we have $H_{\mathcal{B}(\mathbf{a})}\left(\frac{\pi}{2}\right)=I_{2^rn_1\cdots n_r}$, yielding (a).
If $h$ is even but not a multiple of $4$, and $|\mathcal{B}|\equiv 0~(\mod ~2)$, then by Equation (\ref{Idempotent4-1}) and Lemma \ref{3}(b) we have $H_{\mathcal{B}(\mathbf{a})}\left(\frac{\pi}{2}\right)= -I_{2^rn_1\cdots n_r}$, yielding (b).
\qed
\end{proof}


\subsection{Proofs of Theorems \ref{Theorem3}, \ref{Theorem4} and \ref{10-3}}
\label{subsec:pf}

Now we are ready to prove Theorems \ref{Theorem3},  \ref{Theorem4} and \ref{10-3}.

\medskip

\begin{Tproof}\textbf{ of Theorem \ref{Theorem3}.}
Denote by $H_{\mathcal{A}}(t)$ and $H_{\mathbf{a}}(t)$ respectively the transition matrices of $\NEPS(K_{n_{1}},\ldots,K_{n_{d}},r\odot K_2; \mathcal{A})$ and $\NEPS(K_{n_{1}},\ldots,K_{n_{d}},r\odot K_2;\{\mathbf{a}\})$. Since the last $r$ coordinates of each element of $\mathcal{A}$ are equal to $0$, according to Lemmas \ref{8} and \ref{10-1}, we have
 \begin{eqnarray}
 H_{\mathcal{A}}\left(\frac{2\pi}{h}\right) & = & \prod_{\mathbf{a} \in \mathcal{A}}\left(H_{\mathbf{a}}\left(\frac{2\pi}{h}\right)\right)\nonumber\\
& = & \prod_{\mathbf{a} \in \mathcal{A}}\exp\left((-1)^{w(\mathbf{a})-1}\frac{2\pi}{h}\mathrm{i}\right)I_{2^rn_1\cdots n_d}\nonumber\\
& = & \exp\left(\sum_{\mathbf{a} \in \mathcal{A}}\left((-1)^{(w(\mathbf{a})-1)}\frac{2\pi}{h}\mathrm{i}\right)\right)I_{2^rn_1\cdots n_d}.\label{10-3-2}
\end{eqnarray}
This implies that $\NEPS(K_{n_1},\ldots,K_{n_d},r\odot K_2;\mathcal{A})$ is periodic with period $\frac{2\pi}{h}$.
\qed
\end{Tproof}

\begin{Tproof}\textbf{ of Theorem \ref{Theorem4}.}
Denote by $H_{\mathcal{A}}(t)$ and $H_{\mathbf{a}}(t)$ respectively the transition matrices of $\NEPS(K_{n_{1}},\ldots,K_{n_{d}},r\odot K_2; \mathcal{A})$ and $\NEPS(K_{n_{1}},\ldots,K_{n_{d}},r\odot K_2;\{\mathbf{a}\})$. Since the first $d$ coordinates of each element of $\mathcal{A}$ are equal to $0$, according to Lemmas \ref{8} and \ref{10-2}(a), we have
 \begin{eqnarray}
 H_{\mathcal{A}}\left(\frac{\pi}{2}\right) &=&\prod_{\mathbf{a} \in \mathcal{A}}\left(H_{\mathbf{a}}\left(\frac{\pi}{2}\right)\right)\nonumber\\
&=&\prod_{\mathbf{a} \in \mathcal{A}}\left(-\mathrm{i} I_{ n_{1}\cdots n_{d}}\otimes\left(\bigotimes_{j=d+1}^{d+r}(A_{K_2})^{a_j}\right)\right)\nonumber\\
&=& (-\mathrm{i})^{\mid\mathcal{A}\mid}
I_{n_1\cdots n_d}\otimes\left(\bigotimes_{j=d+1}^{d+r}(A_{K_2})^{\sum_{\mathbf{a} \in \mathcal{A}}a_j}\right).\label{10-3-3}
\end{eqnarray}
Thus, if $\mathbf{c}(\mathcal{A}) \neq \mathbf{0}$, then $\NEPS(K_{n_1},\ldots,K_{n_d},r\odot K_2;\mathcal{A})$ has PST from vertex $\mathbf{u}$ to vertex $\mathbf{u}+\mathbf{c}(\mathcal{A})$ at time $\frac{\pi}{2}$, yielding (a). On the other hand, if $\mathbf{c}(\mathcal{A}) = \mathbf{0}$, then $\NEPS(K_{n_1},\ldots,K_{n_d},r\odot K_2;\mathcal{A})$ is periodic with period $\frac{\pi}{2}$, yielding (b).
\qed
\end{Tproof}

\begin{Tproof}\textbf{ of Theorem \ref{10-3}.}
Set
\begin{eqnarray*}
\mathcal{A}_1 &=& \left\{(a_{1}, \ldots, a_d, a_{d+1}, \ldots, a_{d+r}) \in \mathcal{A}: (a_{d+1},\ldots, a_{d+r})=\mathbf{0}\right\},\\
\mathcal{A}_2 &=& \left\{(a_{1}, \ldots, a_d, a_{d+1}, \ldots, a_{d+r}) \in \mathcal{A}: (a_{1},\ldots, a_{d})=\mathbf{0}\right\},\\
\mathcal{A}_3 &= & \mathcal{A}\setminus(\mathcal{A}_1\cup\mathcal{A}_2).
 \end{eqnarray*}
Since $\mathcal{A}$ contains at least one element whose last $r$ coordinates are not all $0$ and at least one element whose first $d$ coordinates are not all $0$, we have $\mathcal{A}_1 \neq \mathcal{A}$ and $\mathcal{A}_2 \neq \mathcal{A}$. Since $\mathcal{A} \neq \emptyset$, at least one of $\mathcal{A}_1$, $\mathcal{A}_2$ and $\mathcal{A}_3$ is nonempty.
Denote by $H_{\mathcal{A}}(t)$, $H_{\mathcal{A}_i}(t)$ and $H_{\mathbf{a}}(t)$ the transition matrices of $\NEPS(K_{n_{1}},\ldots,K_{n_{d}},r\odot K_2; \mathcal{A})$, $\NEPS(K_{n_{1}},\ldots,K_{n_{d}},r\odot K_2; \mathcal{A}_i)$ and $\NEPS(K_{n_{1}},\ldots,K_{n_{d}},r\odot K_2;\{\mathbf{a}\})$, respectively.
By Lemma \ref{8}, we have
\begin{equation}\label{10-3-1}
 H_{\mathcal{A}}(t)=\prod_{i=1}^{3}H_{\mathcal{A}_{i}}(t),
\end{equation}
where $H_{\emptyset}(t)$ is understood as $I$.

Recall that
$$
\mathcal{A}^{*} =\{(a_1,\ldots,a_d): (a_1,\ldots,a_d,a_{d+1},\ldots,a_{d+r})\in \mathcal{A}\},
$$
$$
\mathcal{A}_{-}{(\mathbf{x})} = \{(x_{d+1},\ldots,x_{d+r}): (x_1,\ldots,x_d,x_{d+1},\ldots,x_{d+r})\in \mathcal{A} \text{ and } (x_1,\ldots,x_d) = \mathbf{x}\}
$$
and
$$
\mathcal{A}_{-}{(\mathbf{0})} = \{(x_{d+1},\ldots,x_{d+r}): (0,\ldots,0,x_{d+1},\ldots,x_{d+r})\in \mathcal{A}\}.
$$
Set
$$
\mathcal{A}_{+}{(\mathbf{x})} = \left\{(x_{1},\ldots, x_{d}, x_{d+1}, \ldots, x_{d+r}) \in \mathcal{A}: (x_{1},\ldots, x_{d})=\mathbf{x}\right\}
$$
and
$$
\mathcal{A}_{+}{(\mathbf{0})} = \left\{(0,\ldots,0,x_{d+1} \ldots, x_{d+r}): (0,\ldots,0,x_{d+1} \ldots, x_{d+r}) \in \mathcal{A}\right\}.
$$
Denote by $H_{\mathcal{A}_{+}(\mathbf{x})}(t)$ and $H_{\mathcal{A}_{+}(\mathbf{0})}(t)$ the transition matrices of $\NEPS(K_{n_{1}},\ldots,K_{n_{d}},r\odot K_2; \mathcal{A}_{+}(\mathbf{x}))$ and $\NEPS(K_{n_{1}},\ldots,K_{n_{d}},r\odot K_2;\mathcal{A}_{+}(\mathbf{0}))$, respectively.
By Lemma \ref{8}, we have
\begin{equation}\label{odd}
H_{\mathcal{A}}(t)=\left(\prod_{0\not=\mathbf{x} \in \mathcal{A}^*} H_{\mathcal{A}_{+}(\mathbf{x})}(t) \right) H_{\mathcal{A}_{+}(\mathbf{0})}(t).
\end{equation}
Recall also that $\mathbf{c}(\mathcal{A}) = (b_1, \ldots, b_d, b_{d+1}, \ldots,b_{d+r})$ and $\mathbf{c}(\mathcal{A}_{-}{(\mathbf{0})}) = (c_{d+1},\ldots,c_{d+r})$.

(a) If $h$ is odd, then according to Equation (\ref{10-3-1}) and Lemmas \ref{8}, \ref{4}(a), \ref{10-1} and \ref{10-2}(b), we have
\begin{eqnarray*}
H_{\mathcal{A}}\left(2\pi\right) &=& H_{\mathcal{A}_{1}}(2\pi)H_{\mathcal{A}_{2}}(2\pi)H_{\mathcal{A}_{3}}(2\pi)\\
&=& \left(\prod_{\mathbf{a} \in\mathcal{A}_{1} }H_{\mathbf{a}}\left(2\pi\right)\right)\left(\prod_{\mathbf{a} \in\mathcal{A}_{2} }H_{\mathbf{a}}\left(2\pi\right)\right)\left(\prod_{\mathbf{a}\in\mathcal{A}_{3} }H_{\mathbf{a}}\left(2\pi\right)\right)\\
&=& \left(\prod_{\mathbf{a} \in\mathcal{A}_{1} }H_{\mathbf{a}}\left(2\pi\right)\right)\left(\prod_{\mathbf{a} \in\mathcal{A}_{2} }\left(H_{\mathbf{a}}\left(\pi\right)\right)^2\right)\left(\prod_{\mathbf{a} \in\mathcal{A}_{3} }H_{\mathbf{a}}\left(2\pi\right)\right)\\
&=& I_{2^rn_1\cdots n_d}^{\mid \mathcal{A}_{1} \mid}\cdot \left(-I_{2^rn_1\cdots n_d}\right)^{2\mid \mathcal{A}_{2} \mid}\cdot I_{2^rn_1\cdots n_d}^{\mid \mathcal{A}_{3} \mid}\\
&=& I_{2^rn_1\cdots n_d}.
\end{eqnarray*}
This implies that $\NEPS(K_{n_{1}},\ldots,K_{n_{d}},r\odot K_2;\mathcal{A})$ is periodic with period  $2\pi$.

Since we assume that $|\mathcal{A}_{-}(\mathbf{x})| \equiv 0 ~(\mod ~4)$ and $\mathbf{c}(\mathcal{A}_{-}(\mathbf{x})) = \mathbf{0}$ for every $\mathbf{x} \in \mathcal{A}^* \setminus \{\mathbf{0}\}$, according to Lemmas \ref{8} and \ref{Idempotent}(a), Theorem \ref{Theorem4}, Equation (\ref{odd}) and the definition of $\mathcal{A}_{+}{(\mathbf{0})}$, we obtain
\begin{eqnarray}
H_{\mathcal{A}}\left(\frac{\pi}{2}\right) &=&\left(\prod_{0\not= \mathbf{x} \in \mathcal{A}^* }I_{2^rn_1\ldots n_r}\right)\cdot \left((-\mathrm{i})^{|\mathcal{A}_{+}{(\mathbf{0})}|}
I_{n_1\cdots n_d}\otimes\left(\bigotimes_{j=d+1}^{d+r}(A_{K_2})^{\sum_{\mathbf{y} \in \mathcal{A}_{+}{(\mathbf{0})}}y_j}\right)\right)\nonumber\\
&=&I_{2^rn_1\ldots n_r}\cdot \left((-\mathrm{i})^{|\mathcal{A}_{+}{(\mathbf{0})}|}
I_{n_1\cdots n_d}\otimes\left(\bigotimes_{j=1}^{r}(A_{K_2})^{\sum_{\mathbf{y} \in \mathcal{A}_{-}{(\mathbf{0})}}y_j}\right)\right) \nonumber\\
&=& (-\mathrm{i})^{|\mathcal{A}_{+}{(\mathbf{0})}|}
I_{n_1\cdots n_d}\otimes\left(\bigotimes_{j=1}^{r}(A_{K_2})^{\sum_{\mathbf{y} \in \mathcal{A}_{-}{(\mathbf{0})}}y_j}\right).\label{10-3-8}
\end{eqnarray}
Thus, if $(c_{d+1}, \ldots, c_{d+r}) \neq \mathbf{0}$, then, by (\ref{10-3-8}), $\NEPS(K_{n_1},\ldots,K_{n_d},r\odot K_2;\mathcal{A})$ has PST from vertex $\mathbf{u}$ to vertex $\mathbf{u}+(0, \ldots, 0, c_{d+1}, \ldots, c_{d+r})$ at time $\frac{\pi}{2}$, yielding (a.i). On the other hand, if $(c_{d+1}, \ldots, c_{d+r}) = \mathbf{0}$, then $\NEPS(K_{n_1},\ldots,K_{n_d},r\odot K_2;\mathcal{A})$ is periodic with period $\frac{\pi}{2}$, yielding (a.ii).

(b) If $h$ is even, then according to Equation (\ref{10-3-1}) and Lemmas \ref{8}, \ref{4}(b), \ref{10-1} and \ref{10-2}(b), we have
\begin{eqnarray*}
H_{\mathcal{A}}\left(\pi\right) &=& H_{\mathcal{A}_{1}}(\pi)H_{\mathcal{A}_{2}}(\pi)H_{\mathcal{A}_{3}}(\pi)\\
&=& \left(\prod_{\mathbf{a}\in\mathcal{A}_{1} }H_{\mathbf{a}}\left(\pi\right)\right)\left(\prod_{\mathbf{a} \in\mathcal{A}_{2} }H_{\mathbf{a}}\left(\pi\right)\right)\left(\prod_{\mathbf{a} \in\mathcal{A}_{3} }H_{\mathbf{a}}\left(\pi\right)\right)\\
&=& \left(-I_{2^rn_1\cdots n_d}\right)^{\mid \mathcal{A}_{1} \mid}\cdot \left(-I_{2^rn_1\cdots n_d}\right)^{\mid \mathcal{A}_{2} \mid}\cdot \left(-I_{2^rn_1\cdots n_d}\right)^{\mid \mathcal{A}_{3} \mid}\\
&=& (-1)^{\mid\mathcal{A}\mid}I_{2^rn_1\cdots n_d}.
\end{eqnarray*}
This implies that $\NEPS(K_{n_{1}},\ldots,K_{n_{d}},r\odot K_2;\mathcal{A})$ is periodic with  period $\pi$, yielding (b).

Suppose that $h$ is not a multiple of $4$. Since we assume that $|\mathcal{A}_{-}(\mathbf{x})| \equiv 0 ~(\mod~2)$ and $\mathbf{c}(\mathcal{A}_{-}(\mathbf{x})) = \mathbf{0}$ for every $\mathbf{x} \in \mathcal{A}^* \setminus \{\mathbf{0}\}$, by Lemmas \ref{8} and \ref{Idempotent}(b), Theorem \ref{Theorem4}, Equation (\ref{odd}) and the definition of $\mathcal{A}_+(\mathbf{0})$, we obtain
\begin{eqnarray}
H_{\mathcal{A}}\left(\frac{\pi}{2}\right)
&=&\prod_{0\not=\mathbf{x} \in \mathcal{A}^*}\left(-I_{2^rn_1\ldots n_r}\right)\cdot \left((-\mathrm{i})^{|\mathcal{A}_{+}{(\mathbf{0})}|}
I_{n_1\cdots n_d}\otimes\left(\bigotimes_{j=d+1}^{d+r}(A_{K_2})^{\sum_{\mathbf{y} \in \mathcal{A}_{+}{(\mathbf{0})}} y_j}\right)\right) \nonumber\\
&=& (-1)^{|\mathcal{A}^*|-1}I_{2^rn_1\ldots n_r}\cdot \left((-\mathrm{i})^{|\mathcal{A}_{+}{(\mathbf{0})}|}
I_{n_1\cdots n_d}\otimes\left(\bigotimes_{j=1}^{r}(A_{K_2})^{\sum_{\mathbf{y} \in \mathcal{A}_{-}{(\mathbf{0})}}y_j}\right)\right)\nonumber\\
&=& (-1)^{|\mathcal{A}^*|-1+|\mathcal{A}_{+}{(\mathbf{0})}|}\mathrm{i}^{|\mathcal{A}_{+}{(\mathbf{0})}|}
I_{n_1\cdots n_d}\otimes\left(\bigotimes_{j=1}^{r}(A_{K_2})^{\sum_{\mathbf{y} \in \mathcal{A}_{-}{(\mathbf{0})}}y_j}\right).\label{10-3-9}
\end{eqnarray}
Thus, if $(c_{d+1}, \ldots, c_{d+r}) \neq \mathbf{0}$, then according to (\ref{10-3-9}), $\NEPS(K_{n_1},\ldots,K_{n_d},r\odot K_2;\mathcal{A})$ has PST from vertex $\mathbf{u}$ to vertex $\mathbf{u}+(0, \ldots, 0, c_{d+1}, \ldots, c_{d+r})$ at time $\frac{\pi}{2}$, yielding (b.i). On the other hand, if $(c_{d+1}, \ldots, c_{d+r}) = \mathbf{0}$, then $\NEPS(K_{n_1},\ldots,K_{n_d},r\odot K_2;\mathcal{A})$ is periodic with period $\frac{\pi}{2}$, yielding (b.ii).

(c) Assume that $h$ is a multiple of $4$. If $\mathcal{A}_{3}\not=\emptyset$, then Lemmas \ref{8} and \ref{4}(c) imply that
 \begin{eqnarray}
 H_{\mathcal{A}_3}\left(\frac{\pi}{2}\right) &=& \prod_{\mathbf{a}\in \mathcal{A}_3}\left(H_{\mathbf{a}}\left(\frac{\pi}{2}\right)\right)\nonumber\\
&=& \prod_{\mathbf{a}\in \mathcal{A}_3}\left((-1)^{(w(a_{1},\ldots,a_{d})-1)}\mathrm{i} I_{n_{1}\cdots n_{d}}\otimes\left(\bigotimes_{j=d+1}^{d+r}(A_{K_2})^{a_j}\right)\right)\nonumber\\
&=& \left(-1\right)^{\sum_{\mathbf{a} \in \mathcal{A}_3}(w(a_{1},\ldots,a_{d})-1)}
\mathrm{i}^{\mid\mathcal{A}_3\mid}
I_{n_1\cdots n_d}\otimes\left(\bigotimes_{j=d+1}^{d+r}(A_{K_2})^{\sum_{\mathbf{a} \in \mathcal{A}_3}a_j} \right).\label{10-3-4}
\end{eqnarray}
If $\mathcal{A}_{1}\not=\emptyset$, then by (\ref{10-3-2}), we have
\begin{equation}\label{The10-3-5}
H_{\mathcal{A}_1}\left(\frac{\pi}{2}\right)= (-1)^{\sum_{\mathbf{a} \in \mathcal{A}_1}(w(\mathbf{a})-1)}\mathrm{i}^{\mid\mathcal{A}_1\mid}I_{2^rn_1\cdots n_d}.
\end{equation}
If $\mathcal{A}_{2}\not=\emptyset$, then by (\ref{10-3-3}), we have
\begin{equation}
\label{The10-3-6}
H_{\mathcal{A}_2}\left(\frac{\pi}{2}\right)= (-1)^{\mid\mathcal{A}_2\mid}\mathrm{i}^{\mid\mathcal{A}_2\mid}
I_{n_1\cdots n_d}\otimes\left(\bigotimes_{j=d+1}^{d+r}(A_{K_2})^{\sum_{\mathbf{a} \in \mathcal{A}_2}a_j}\right).
\end{equation}
Since at least one of $\mathcal{A}_1$, $\mathcal{A}_2$ and $\mathcal{A}_3$ is nonempty, according to Equations (\ref{10-3-1})-(\ref{The10-3-6}), we obtain
\begin{eqnarray}
H_\mathcal{A}\left(\frac{\pi}{2}\right) & = &\left((-1)^{\sum_{\mathbf{b}\in \mathcal{A}_1}(w(\mathbf{b})-1)}\mathrm{i}^{\mid\mathcal{A}_1\mid}I_{2^rn_1\cdots n_d}\right)\nonumber\\
  & & \cdot \left((-1)^{\mid\mathcal{A}_2\mid}\mathrm{i}^{\mid\mathcal{A}_2\mid}
I_{n_1\cdots n_d}\otimes\left(\bigotimes_{j=d+1}^{d+r}(A_{K_2})^{\sum_{\mathbf{b} \in \mathcal{A}_2}b_j}\right)\right) \nonumber\\
 & & \cdot \left((-1)^{\sum_{\mathbf{b}\in \mathcal{A}_3}(w(b_{1},\ldots,b_{d})-1)}
\mathrm{i}^{\mid\mathcal{A}_3\mid}
I_{n_1\cdots n_d}\otimes\left(\bigotimes_{j=d+1}^{d+r}(A_{K_2})^{\sum_{\mathbf{b} \in \mathcal{A}_3}b_j}\right)\right)\nonumber\\
& = & (-1)^{\sum_{\mathbf{b}\in \mathcal{A}_1}(w(\mathbf{b})-1)+\mid\mathcal{A}_2\mid+\sum_{\mathbf{b} \in \mathcal{A}_3}(w( b_{1},\ldots,b_{d})-1)}
\mathrm{i}^{\mid\mathcal{A}_1\mid+\mid\mathcal{A}_2\mid+\mid\mathcal{A}_3\mid}\nonumber\\
& & \cdot I_{n_1\cdots n_d}\otimes\left(\bigotimes_{j=d+1}^{d+r}(A_{K_2})^{\sum_{\mathbf{b} \in \mathcal{A}}b_j}\right)\nonumber\\
&=& (-1)^{\sum_{\mathbf{b}\in \mathcal{A}_1}(w(\mathbf{b})-1)+\mid\mathcal{A}_2\mid+\sum_{\mathbf{b} \in \mathcal{A}_3}(w( b_{1},\ldots,b_{d})-1)}
\mathrm{i}^{\mid\mathcal{A}\mid} \nonumber \\
& & \cdot I_{n_1\cdots n_d} \otimes\left(\bigotimes_{j=d+1}^{d+r}(A_{K_2})^{\sum_{\mathbf{b} \in \mathcal{A}}b_j}\right).\label{Lemm10-3-5}
\end{eqnarray}

By our assumption, $\mathcal{A}$ contains at least one element whose last $r$ coordinates are not all equal to $0$, and at least one element whose first $d$ coordinates are not all equal to $0$. So (\ref{Lemm10-3-5}) can be restated as
\begin{equation}\label{LEq:10121}
H_\mathcal{A}\left(\frac{\pi}{2}\right)= \delta(\mathcal{A}_{1},\mathcal{A}_{2},\mathcal{A}_{3})\cdot I_{n_1\cdots n_d} \otimes\left(\bigotimes_{j=d+1}^{d+r}(A_{K_2})^{\sum_{\mathbf{b} \in \mathcal{A}}b_j}\right),
\end{equation}
where we set
\begin{eqnarray*}
\delta(\mathcal{A}_{1},\mathcal{A}_{2},\mathcal{A}_{3})=
\left\{\begin{array}{ll}
       (-1)^{\sum_{\mathbf{b}\in \mathcal{A}_1}(w(\mathbf{b})-1)+\mid\mathcal{A}_2\mid+\sum_{\mathbf{b} \in \mathcal{A}_3}(w( b_{1},\ldots,b_{d})-1)}
\mathrm{i}^{\mid\mathcal{A}\mid}, & \text{if $\mathcal{A}_{1}, \mathcal{A}_{2}, \mathcal{A}_{3}\not=\emptyset$};\\[0.1cm]
         (-1)^{\sum_{\mathbf{b}\in \mathcal{A}_1}(w(\mathbf{b})-1)+\mid\mathcal{A}_2\mid}
\mathrm{i}^{\mid\mathcal{A}\mid}, & \text{if $\mathcal{A}_{1}, \mathcal{A}_{2}\not=\emptyset$ and $\mathcal{A}_{3}=\emptyset$};\\[0.1cm]
   (-1)^{\sum_{\mathbf{b}\in \mathcal{A}_1}(w(\mathbf{b})-1)+\sum_{\mathbf{b} \in \mathcal{A}_3}(w( b_{1},\ldots,b_{d})-1)}
\mathrm{i}^{\mid\mathcal{A}\mid},  &   \text{if $\mathcal{A}_{1}, \mathcal{A}_{3}\not=\emptyset$ and $\mathcal{A}_{2}=\emptyset$};\\[0.1cm]
 (-1)^{\mid\mathcal{A}_2\mid+\sum_{\mathbf{b} \in \mathcal{A}_3}(w( b_{1},\ldots,b_{d})-1)}
\mathrm{i}^{\mid\mathcal{A}\mid},  &   \text{if $\mathcal{A}_{2}, \mathcal{A}_{3}\not=\emptyset$ and $\mathcal{A}_{1}=\emptyset$}; \\[0.1cm]
  (-1)^{\sum_{\mathbf{b} \in \mathcal{A}_3}(w( b_{1},\ldots,b_{d})-1)}
\mathrm{i}^{\mid\mathcal{A}\mid},     &    \text{if $\mathcal{A}_{3}\not=\emptyset$ and $\mathcal{A}_{1}=\mathcal{A}_{2}=\emptyset$}.
        \end{array}
\right.
\end{eqnarray*}

Therefore, if $(b_{d+1},\ldots,b_{d+r}) \neq \mathbf{0}$, then by (\ref{Lemm10-3-5}) and (\ref{LEq:10121}), $\NEPS(K_{n_1},\ldots,K_{n_{d}},r\odot K_2;\mathcal{A})$ has PST from vertex $\mathbf{u}$ to vertex $\mathbf{u}+(0,\ldots,0, b_{d+1},\ldots,b_{d+r})$ at time $\frac{\pi}{2}$, yielding (c.i).
On the other hand, if $(b_{d+1},\ldots,b_{d+r}) = \mathbf{0}$, then by (\ref{Lemm10-3-5})  and (\ref{LEq:10121}) we obtain (c.ii) immediately.
\qed
\end{Tproof}


\section{Concluding remarks}
\label{sec:cubelike}

Consider $\emptyset \ne \mathcal{A} \subseteq \mathbb{Z}_2^r \setminus \{\mathbf{0}\}$. It was proved in \cite[Theorem 1]{AB} that, if $\mathbf{c}(\mathcal{A}) \ne \mathbf{0}$, then the cubelike graph $\NEPS(r\odot K_2; \mathcal{A})$ admits PST from $\mathbf{u}$ to $\mathbf{u} + \mathbf{c}(\mathcal{A})$ at time $\frac{\pi}{2}$ for every vertex $\mathbf{u}$. A simple proof of this result was given in \cite[Theorem 2.3]{CC}, where it was also proved that if $\mathbf{c}(\mathcal{A}) = \mathbf{0}$ then $\NEPS(r\odot K_2; \mathcal{A})$ is periodic with period $\frac{\pi}{2}$. We now show that tools developed in previous sections can be used to give another proof of these two results. In addition, we obtain the transition matrix of $\NEPS(r\odot K_2; \mathcal{A})$ at time $\frac{\pi}{2}$.

\begin{theorem}
\label{cubelike1}
\emph{(\cite[Theorem 1]{AB}; \cite[Theorem 2.3]{CC})}
Let $\emptyset \ne \mathcal{A} \subseteq \mathbb{Z}_2^r \setminus \{\mathbf{0}\}$. Denote by $H_{\mathcal{A}}(t)$ the transition matrix of the cubelike graph $\NEPS(r\odot K_2; \mathcal{A})$ and set $\mathbf{c}(\mathcal{A}) = (c_1, \ldots, c_r)$.
\begin{itemize}
\item[\rm (a)] If $\mathbf{c}(\mathcal{A}) \neq \mathbf{0}$, then $\NEPS(r\odot K_2; \mathcal{A})$ admits PST from vertex $\mathbf{u}$ to vertex $\mathbf{u}+\mathbf{c}(\mathcal{A})$ at time $\frac{\pi}{2}$, for every vertex $\mathbf{u}$. Moreover,
$$
H_{\mathcal{A}}\left(\frac{\pi}{2}\right)
=(-\mathrm{i})^{\mid \mathcal{A}\mid}\bigotimes_{j=1}^{r}(A_{K_2})^{c_j}.
$$
\item[\rm (b)] If $\mathbf{c}(\mathcal{A})=\mathbf{0}$, then $\NEPS(r\odot K_2; \mathcal{A})$ is periodic with period $\frac{\pi}{2}$. Moreover,
$$
H_{\mathcal{A}}\left(\frac{\pi}{2}\right)
=(-\mathrm{i})^{\mid \mathcal{A}\mid}I_{2^r}.
$$
\end{itemize}
\end{theorem}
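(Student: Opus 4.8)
The plan is to exploit the multiplicativity of the transition matrix over the basis $\mathcal{A}$ together with the explicit evaluation of each single-generator factor at $t = \frac{\pi}{2}$. First I would invoke Lemma \ref{8} to write
$$
H_{\mathcal{A}}\left(\frac{\pi}{2}\right) = \prod_{\mathbf{a} \in \mathcal{A}} H_{\mathbf{a}}\left(\frac{\pi}{2}\right),
$$
where $H_{\mathbf{a}}(t)$ denotes the transition matrix of $\NEPS(r \odot K_2; \{\mathbf{a}\})$ for each $\mathbf{a} = (a_1, \ldots, a_r) \in \mathcal{A}$. Since every element of $\mathcal{A}$ is nonzero, Lemma \ref{Lemma2.5}(c), specifically Equation \eqref{3-1Eq-2}, applies and gives $H_{\mathbf{a}}\left(\frac{\pi}{2}\right) = -\mathrm{i} \bigotimes_{j=1}^r (A_{K_2})^{a_j}$ for each such $\mathbf{a}$.

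Next I would collapse the product. Using the mixed-product property (4) of the Kronecker product together with $(A_{K_2})^2 = I_2$, the product of two such tensor factors satisfies $\left(\bigotimes_{j=1}^r (A_{K_2})^{a_j}\right)\left(\bigotimes_{j=1}^r (A_{K_2})^{a'_j}\right) = \bigotimes_{j=1}^r (A_{K_2})^{a_j + a'_j}$, where the exponents may be reduced modulo $2$. Iterating over all $\mathbf{a} \in \mathcal{A}$, the tensor factors combine into $\bigotimes_{j=1}^r (A_{K_2})^{c_j}$ where $(c_1, \ldots, c_r) = \mathbf{c}(\mathcal{A}) = \sum_{\mathbf{a} \in \mathcal{A}} \mathbf{a}$ in $\mathbb{Z}_2^r$, while the scalar prefactors multiply to $(-\mathrm{i})^{|\mathcal{A}|}$. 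This yields
$$
H_{\mathcal{A}}\left(\frac{\pi}{2}\right) = (-\mathrm{i})^{|\mathcal{A}|} \bigotimes_{j=1}^r (A_{K_2})^{c_j},
$$
which is the common closed form underlying both parts of the statement.

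Finally I would read off the two conclusions. Since $A_{K_2}$ is the $2 \times 2$ permutation matrix swapping the two vertices of $K_2$, the tensor product $\bigotimes_{j=1}^r (A_{K_2})^{c_j}$ is exactly the permutation matrix on $\mathbb{Z}_2^r$ sending $\mathbf{u}$ to $\mathbf{u} + \mathbf{c}(\mathcal{A})$; its $(\mathbf{u}, \mathbf{v})$-entry equals $1$ precisely when $\mathbf{v} = \mathbf{u} + \mathbf{c}(\mathcal{A})$ and $0$ otherwise. Consequently, when $\mathbf{c}(\mathcal{A}) \neq \mathbf{0}$ the $(\mathbf{u}, \mathbf{u} + \mathbf{c}(\mathcal{A}))$-entry has modulus $|(-\mathrm{i})^{|\mathcal{A}|}| = 1$ with $\mathbf{u} + \mathbf{c}(\mathcal{A}) \neq \mathbf{u}$, giving PST as in (a); and when $\mathbf{c}(\mathcal{A}) = \mathbf{0}$ the tensor product is $I_{2^r}$, so $H_{\mathcal{A}}\left(\frac{\pi}{2}\right) = (-\mathrm{i})^{|\mathcal{A}|} I_{2^r}$ is a scalar multiple of the identity, giving periodicity with period $\frac{\pi}{2}$ as in (b). The only mildly delicate point is the exponent bookkeeping in the collapse step, namely ensuring that the reduction modulo $2$ of the Kronecker exponents produces exactly $\mathbf{c}(\mathcal{A})$; but this is immediate from the involution $(A_{K_2})^2 = I_2$ rather than a genuine obstacle, so the argument is essentially the degenerate case $d = 0$ of the proof of Theorem \ref{Theorem4}.
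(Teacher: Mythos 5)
Your proposal is correct and follows essentially the same route as the paper's own proof: both apply Lemma \ref{8} to factor $H_{\mathcal{A}}\left(\frac{\pi}{2}\right)$ over the elements of $\mathcal{A}$, evaluate each factor via Equation \eqref{3-1Eq-2} of Lemma \ref{Lemma2.5}(c), and collapse the product using $(A_{K_2})^2 = I_2$ to obtain $(-\mathrm{i})^{|\mathcal{A}|}\bigotimes_{j=1}^{r}(A_{K_2})^{c_j}$. Your extra remark identifying the tensor factor as the permutation matrix for translation by $\mathbf{c}(\mathcal{A})$ just makes explicit what the paper leaves implicit.
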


\begin{proof}
Denote by $H_{\mathbf{a}}(t)$ the transition matrix of $\NEPS(r\odot K_2; \{\mathbf{a}\})$. By Lemmas \ref{8} and \ref{Lemma2.5}(c), we have
\begin{eqnarray}
H_{\mathcal{A}}\left(\frac{\pi}{2}\right)&=&\prod_{\mathbf{a} \in \mathcal{A}} H_{\mathbf{a}}\left(\frac{\pi}{2}\right) \nonumber\\
&=&\prod_{\mathbf{a} \in \mathcal{A}}\left(-\mathrm{i}\bigotimes_{j=1}^{r}(A_{K_2})^{a_j}\right)\nonumber\\
&=&(-\mathrm{i})^{\mid \mathcal{A}\mid}\bigotimes_{j=1}^{r}(A_{K_2})^{\sum_{\mathbf{a} \in \mathcal{A}}a_j}. \nonumber
\end{eqnarray}
Thus, if $\mathbf{c}(\mathcal{A}) \neq \mathbf{0}$, then $\NEPS(r\odot K_2;\mathcal{A})$ has PST from $\mathbf{u}$ to $\mathbf{u}+\mathbf{c}(\mathcal{A})$ at time $\frac{\pi}{2}$, yielding (a). On the other hand, if $\mathbf{c}(\mathcal{A})=\mathbf{0}$, then $\NEPS(r\odot K_2;\mathcal{A})$ is periodic with period $\frac{\pi}{2}$, yielding (b).
\qed
\end{proof}


Finally, we would like to mention that the sufficient conditions in part (a) of Theorem \ref{Theorem4} and part (c.i) of Theorem \ref{10-3} are in general not necessary for $\NEPS$ of complete graphs to admit PST. We illustrate this by the following examples.

\begin{example}
\label{Exampe:2}
{\em
Consider the  graph $\NEPS(K_3,5\odot K_2;\mathcal{A})$, where
\begin{align*}
\mathcal{A}=&\{(0,0,0,0,0,1), (0,0,0,0,1,0), (0,0,0,0,1,1), (0,0,0,1,0,0), (0,0,0,1,0,1),(0,0,1,0,0,0),\\
 &~\, (0,0,1,0,0,1),(0,1,0,0,0,0),(0,1,0,0,0,1),(0,1,1,1,1,0),  (0,1,1,1,1,1)\}.
\end{align*}
Then $\mathbf{c}(\mathcal{A}) = \mathbf{0}$.
For any $\mathbf{a}=(a_1,a_2,a_3,a_4,a_5,a_6)\in \mathcal{A}$, define $\mathbf{a}^*=(a_2,a_3,a_4,a_5,a_6)$. Set
$$
\mathcal{A}_-^*=\{\mathbf{a}^*:\mathbf{a}\in\mathcal{A}\}.
$$
Let $A_{\mathbf{a}}$ and $A_{\mathbf{a}^*}$ be the adjacency matrices of $\NEPS(K_3,5\odot K_2;\{\mathbf{a}\})$ and $\NEPS(5\odot K_2;\{\mathbf{a}^*\})$, respectively. Lemma \ref{7} implies that $A_{\mathbf{a}}=I_3\otimes A_{\mathbf{a}^*}$.
Hence, by Lemmas \ref{3.1}(b) and \ref{8}, we have
\begin{eqnarray*}
H_\mathcal{A}(t) & = &\prod_{\mathbf{a} \in \mathcal{A}}H_{\mathbf{a}}(t)\\
   & = & \prod_{\mathbf{a} \in \mathcal{A}} I_3\otimes H_{\mathbf{a}^*}(t) \\
   & = &  I_3\otimes \prod_{\mathbf{a}^* \in \mathcal{A}_-^*} H_{\mathbf{a}^*}(t) \\
 & = &  I_3\otimes H_{\mathcal{A}_-^*}(t).
\end{eqnarray*}
It is known \cite{CC} that $\NEPS(5\odot K_2; \mathcal{A}_-^*)$ admits PST at time $\frac{\pi}{4}$. This implies that $\NEPS(K_3,5\odot K_2;\mathcal{A})$ admits PST at time $\frac{\pi}{4}$. On the other hand, we have $\mathbf{c}(\mathcal{A}) = \mathbf{0}$ as mentioned above. This shows that the sufficient condition in part (a) of Theorem \ref{Theorem4} is in general not necessary for $\NEPS(K_{n_1},\ldots,K_{n_d},r\odot K_2;\mathcal{A})$ to admit PST.
}
\qed\end{example}

\begin{example}
\label{Exampe:3}
{\em
Consider the  graph $\NEPS(K_4,5\odot K_2;\mathcal{A})$, where
\begin{align*}
\mathcal{A}=&\{(1,0,1,1,1,1),(1,1,0,1,1,1), (1,1,1,0,1,1),(1,1,1,1,0,1),(1,0,0,1,1,1),
(1,0,1,0,1,1), \\
&~~(1,0,1,1,0,1),(1,0,1,1,1,0),(1,1,0,0,1,1),(1,1,0,1,0,1),(1,1,0,1,1,0),(1,1,1,0,0,1),\\
&~~(1,1,1,0,1,0),(1,1,1,1,0,0),(1,0,1,1,0,0), (1,0,0,1,1,0),(1,1,0,1,0,0),(1,1,0,0,1,0),\\
&~~(1,0,1,0,1,0),(1,1,1,0,0,0)\}.
\end{align*}
For any $\mathbf{a}=(a_1,a_2,a_3,a_4,a_5,a_6)\in \mathcal{A}$, define $\mathbf{a}^*=(a_2,a_3,a_4,a_5,a_6)$. Let
$$
\mathcal{A}_-^*=\{\mathbf{a}^*:\mathbf{a}\in\mathcal{A}\}.
$$Let $A_{\mathbf{a}}$ and $A_{\mathbf{a}^*}$ be the adjacency matrices of $\NEPS(K_4,5\odot K_2;\{\mathbf{a}\})$  and $\NEPS(5\odot K_2;\{\mathbf{a}^*\})$,  respectively.  Lemma \ref{7} implies that  $A_{\mathbf{a}}=A_{K_4}\otimes A_{\mathbf{a}^*}$.
According to Lemma \ref{alg} and Equation (\ref{H_ADec3}), we have
$$H_{\mathbf{a}}(t)=H_{A_{K_4\otimes A_{\mathbf{a}^*}}}(t)=E_3 \otimes H_{\mathbf{a}^*}(3t)+E_{-1} \otimes H_{\mathbf{a}^*}(-t),$$
where
\begin{equation*}
E_{3}=\frac{1}{4}\left(
  \begin{array}{rrrrr}
    1 & 1 & 1 &  1\\
    1 & 1 & 1 &  1\\
       1 & 1 & 1 &  1\\
       1 & 1 & 1 &  1
  \end{array}
\right),\; E_{-1}=\frac{1}{4}\left(
  \begin{array}{rrrrr}
    3 & -1 & -1 &   -1\\
    -1 & 3 & -1 &   -1\\
   -1 & -1 & 3 &  -1\\
    -1 & -1 &  -1 & 3
  \end{array}
\right).
\end{equation*}
Hence, by Lemmas \ref{3.1}(b) and \ref{8}, we have
\begin{eqnarray}\label{equation56}
H_\mathcal{A}(t) & = &\prod_{\mathbf{a} \in \mathcal{A}}H_{\mathbf{a}}(t)\nonumber\\
   & = & \prod_{\mathbf{a} \in \mathcal{A}}\left( E_3 \otimes H_{\mathbf{a}^*}(3t)+E_{-1} \otimes H_{\mathbf{a}^*}(-t)\right) \nonumber\\
   & = &  E_3 \otimes \prod_{\mathbf{a}^* \in \mathcal{A}_-^*}H_{\mathbf{a}^*}(3t)+E_{-1} \otimes \prod_{\mathbf{a}^* \in \mathcal{A}_-^*}H_{\mathbf{a}^*}(-t) \nonumber\\
 & = &  E_3 \otimes H_{\mathcal{A}_-^*}(3t)+E_{-1} \otimes H_{\mathcal{A}_-^*}(-t).
\end{eqnarray}
By Lemma \ref{Lemma2.5}(c), we have
$$H_{\mathbf{a}^*}\left(\frac{\pi}{2}\right) =-\mathrm{i}\bigotimes_{j=1}^{5}(A_{K_2})^{a_j}.$$
Note that $|\mathcal{A}_-^*|=20$ and $\sum_{\mathbf{a}^* \in \mathcal{A}_-^*}\mathbf{a}^*=(0,0,0,0,0)$.
Thus, by Lemma \ref{8}, we have
\begin{eqnarray*}
H_{\mathcal{A}_-^*}\left(\frac{\pi}{2}\right)&=&\prod_{\mathbf{a}^* \in \mathcal{A}_-^*}H_{\mathbf{a}^*}\left(\frac{\pi}{2}\right) \\
&=&(-\mathrm{i})^{|\mathcal{A}_-^*|}
\prod_{\mathbf{a}^* \in \mathcal{A}_-^*}\left(\bigotimes_{j=1}^{5}(A_{K_2})^{a_j}\right)\\
&=&\bigotimes_{j=1}^{5}(A_{K_2})^{\sum_{\mathbf{a}^* \in \mathcal{A}_-^*}a_j} \\
&=&I_{2^5},
\end{eqnarray*}
yielding $H_{\mathcal{A}_-^*}\left(\pi\right)=I_{2^5}$.
Thus, when $t=\frac{\pi}{4}$, Equation (\ref{equation56}) gives 
\begin{eqnarray}\label{equation78}
H_\mathcal{A}\left(\frac{\pi}{4}\right)
 & = &  E_3 \otimes H_{\mathcal{A}_-^*}\left(\frac{3\pi}{4}\right)+E_{-1} \otimes H_{\mathcal{A}_-^*}\left(-\frac{\pi}{4}\right)\nonumber\\
  & = &  E_3 \otimes H_{\mathcal{A}_-^*}\left(\frac{3\pi}{4}\right)+E_{-1} \otimes \left( H_{\mathcal{A}_-^*}\left(\frac{3\pi}{4}\right) H_{\mathcal{A}_-^*}(-\pi)\right)\nonumber\\
  &=& E_3 \otimes H_{\mathcal{A}_-^*}\left(\frac{3\pi}{4}\right)+E_{-1} \otimes H_{\mathcal{A}_-^*}\left(\frac{3\pi}{4}\right)\nonumber\\
  &=& I_4\otimes H_{\mathcal{A}_-^*}\left(\frac{3\pi}{4}\right).
 \end{eqnarray}
It is known \cite{CC} that $\NEPS(5\odot K_2; \mathcal{A}_-^*)$ admits PST at time $\frac{3\pi}{4}$. According to (\ref{equation78}), $\NEPS(K_4,5\odot K_2;\mathcal{A})$ admits PST at time $\frac{\pi}{4}$. On the other hand, we have $\mathbf{c}(\mathcal{A}_-^*) = \mathbf{0}$. This shows that the sufficient condition in part (c.i) of Theorem \ref{10-3} is in general not necessary for $\NEPS(K_{n_1},\ldots,K_{n_d},r\odot K_2;\mathcal{A})$ to admit PST.
}
\qed\end{example}

\bigskip

\noindent \textbf{Acknowledgement}~~The authors greatly appreciate the anonymous referees for their comments and suggestions.


\begin{thebibliography}{99}


\bibitem{Basic11}
M. Ba{\v{s}}i{\'c}, Characterization of quantum circulant networks having perfect state transfer, Quantum Inf. Process. 12 (2011) 345--364.

\bibitem{AB}
A. Bernasconi, C. Godsil, S. Severini, Quantum networks on cubelike graphs, Phys. Rev. A 78 (5) (2008) 052320.

\bibitem{SBose}
S. Bose, Quantum communication through an unmodulated spin chain, Phys. Rev. Lett. 91 (20) (2003) 207901.

\bibitem{CC}
W. Cheung, C. Godsil, Perfect state transfer in cubelike graphs, Linear Algebra Appl. 435 (2011) 2468--2474.

\bibitem{chris1}
M. Christandl, N. Datta, A. Ekert, A. J. Landahl, Perfect state transfer in quantum spin networks, Phys. Rev. Lett. 92 (18) (2004) 187902.
 
 
\bibitem{Coutinho15}
G. Coutinho, C. Godsil, K. Guo, F. Vanhove, Perfect state transfer on distance-regular graphs and association schemes, Linear Algebra Appl. 478 (2015) 108--130.



\bibitem{SG1}
D. M. Cvetkovi\'{c}, P. Rowlinson, S. Simi\'{c}, An Introduction to the Theory of Graph Spectra, Cambridge University Press, New York, 2010.





\bibitem{CGodsil}
C. Godsil, State transfer on graphs, Discrete Math. 312 (1) (2012) 129--147.


\bibitem{Godsil12}
C. Godsil, When can perfect state transfer occur?, Electron. J. Linear Algebra 23 (2012) 877--890.

\bibitem{ImrichK00}
R. Hammack, W. Imrich, S. Klav\v{z}ar, Handbook of Product Graphs, 2nd Edition, CRC Press, Boca Raton, 2011.

\bibitem{Horn91}
R. A. Horn, C. R. Johnson, Topics in Matrix Analysis, Cambridge University Press, 1991.




\bibitem{MAALA}
C. D. Meyer, Matrix Analysis and Applied Linear Algebra, Siam, Philadelphia, 2000.



\bibitem{HPal1}
H. Pal,  B. Bhattacharjya, Perfect state transfer on NEPS of the path on three vertices, Discrete Math. 339 (2016) 831--838.


\bibitem{HPal}
H. Pal,  B. Bhattacharjya, Perfect state transfer on gcd-graphs, Linear Multilinear Algebra 65 (2017) 2245--2256.


\bibitem{Stevanovic11}
D. Stevanovi{\'c}, Applications of graph spectra in quantum physics, Zb. Rad. (Beogr.), 14 (22) (2011) 85--111. (Selected topics on applications of graph spectra)

\bibitem{FU}
M. S. Underwood, D. L. Feder, Universl quantum computation by discontinuous quantum walk, Phys. Rev. A 82 (2010) 042304.


\bibitem{HF}
H. Zhang, F. Ding, On the Kronecker products and their applications, J. Appl. Math. 2013 (2013) 296185.

\bibitem{SZ}
S. Zheng, X. Liu, S. Zhang, Perfect state transfer in NEPS of some graphs,
  Linear Multilinear Algebra  68 (8) (2020) 1518--1533.
  
  \bibitem{Zhou15}
J. Zhou, C. Bu, J. Shen, Some results for the periodicity and perfect state transfer, Electron. J. Combin.  18 (2011) \#P184.

\bibitem{Zhou14}
J. Zhou, C. Bu, State transfer and star complements in graphs, Discrete Appl. Math. 176 (2014) 130--134.





\end{thebibliography}
\end{document}